\renewcommand{\S}{\mathcal S}
\newcommand{\G}{\mathcal G}
\newcommand{\R}{\mathds R}
\newcommand{\ind}{\mathds 1}
\newcommand{\N}{\mathds N}
\newcommand{\E}{\mathds E}
\renewcommand{\P}{\mathds P}
\newcommand{\Q}{\mathds Q}
\newcommand{\widesim}[2][1.5]{\mathrel{\overset{#2}{\scalebox{#1}[1]{$\sim$}}}}
\DeclareMathOperator{\Cov}{Cov}	
\DeclareMathOperator{\rank}{rank}
\DeclareMathOperator{\Var}{Var}
\DeclareMathOperator{\Id}{Id}
\let\Im\relax
\DeclareMathOperator{\Im}{Im}
\DeclareMathOperator{\sign}{sign}
\numberwithin{equation}{section}
\theoremstyle{plain}
\newtheorem{theorem}{Theorem}[section]
\newtheorem{proposition}{Proposition}[section]
\newtheorem{lemma}{Lemma}[section]
\newtheorem{corollary}{Corollary}[section]
\newtheorem{remark}{Remark}[section]
\begin{document}

\begin{frontmatter}
\title{The critical threshold level on Kendall's tau statistic concerning minimax estimation of sparse correlation matrices}
\runtitle{The critical threshold level}
\thankstext{T1}{Supported by the DFG research unit FG 1735, RO 3766/3-1.}

\begin{aug}
\author{\fnms{Kamil} \snm{Jurczak}\thanksref{T1}\ead[label=e1]{kamil.jurczak@ruhr-uni-bochum.de}}

\runauthor{Kamil Jurczak}

\affiliation{Ruhr-Universit\"at Bochum}

\address{Kamil Jurczak\\
Universit\"atsstrasse 150\\
44801 Bochum\\
Germany\\
\printead{e1}\\
\phantom{Email:\ }}
\end{aug}

\begin{abstract} In a sparse high-dimensional elliptical model we consider a hard threshold estimator for the correlation matrix based on Kendall's tau with threshold level $\alpha(\frac{\log p}{n})^{1/2}$. Parameters $\alpha$ are identified such that the threshold estimator achieves the minimax rate under the squared Frobenius norm and the squared spectral norm. This allows a reasonable calibration of the estimator without any quantitative information about the tails of the underlying distribution. 
For Gaussian observations we even establish a critical threshold constant $\alpha^\ast$ under the squared Frobenius norm, i.e. the proposed estimator attains the minimax rate for $\alpha>\alpha^\ast$ but in general not for $\alpha<\alpha^\ast$. 
To the best of the author's knowledge this is the first work concerning critical threshold constants. The main ingredient to provide the critical threshold level is a sharp large deviation expansion for Kendall's tau sample correlation evolved from an asymptotic expansion of the number of permutations with a certain number of inversions.

The investigation of this paper also covers further statistical problems like the estimation of the latent correlation matrix in the transelliptical and nonparanormal family.
\end{abstract}

\begin{keyword}[class=MSC]
\kwd[]{62H12}
\end{keyword}

\begin{keyword}
\kwd{correlation matrix}
\kwd{elliptical copula}
\kwd{transelliptical distribution}
\kwd{high-dimensional}
\kwd{Kendall's tau}
\kwd{minimax estimation}
\kwd{sparsity}
\kwd{critical threshold level}
\kwd{random permutation}
\kwd{sharp tail bounds}
\end{keyword}

\end{frontmatter}

\section{Introduction}\label{sec:int}
\noindent  Let $X_1,...,X_n$ be $n$ i.i.d. observations in $\R^p$ with population covariance matrix $\Sigma$ and correlation matrix $\rho$. The estimation of $\Sigma$ and $\rho$ is of great interest in multivariate analysis, among others for principal component analysis and linear discriminant analysis, see also \cite{Bickel2008b} and \cite{Pourahmadi2013} for further applications. In modern statistical problems like for example in gene expression arrays  the dimension $p$ of the observations is typically much larger than the sample size $n$. In this case, the sample covariance matrix is a poor and in general inconsistent estimator for the population covariance matrix (see for example \cite{Johnstone2001} and \cite{Baik2006}). Therefore the problem of estimating high-dimensional covariance matrices has been investigated under a variety of additional structural assumptions on $\Sigma$. For instance, the spiked covariance model, i.e. the population covariance matrix has a representation as the sum of a diagonal and a low-rank matrix, with sparse leading eigenvectors - and variants of it - has been extensively studied (cf. e.g. \cite{Johnstone2009}, \cite{Johnstone2007}, \cite{daspremont2007}, \cite{Shen2008}, \cite{Amini2009}, \cite{Cai2012}, \cite{Fan2013}, \cite{Berthet2013}, \cite{Vu2013}  and \cite{Caid} among others).

A further common assumption is sparsity on the covariance matrix itself (cf. e.g. \cite{Bickel2008}, \cite{Karoui2008} and \cite{Levina2012}). While in some models there is more information about the structure of the sparsity as for band covariance matrices, the case, that the position of the entries with small magnitude is unknown, is of particularly great interest. One possibility to model the latter assumption is to assume that each row lies in a (weak) $\ell_q$-ball (see \cite{Bickel2008b} and \cite{Caib}). While \cite{Bickel2008b} proved the consistency of an entrywise threshold estimator based on the sample covariance if $\frac{\log p}{n}$ tends to zero, \cite{Caib} established the minimax rates of estimation for $\Sigma$ under the assumption that each row is an element of a weak $\ell_q$-ball and proved that the threshold estimator proposed by \cite{Bickel2008b} attains the minimax rate if the constant $\alpha>0$ in the threshold level $\alpha(\frac{\log p}{n})^{1/2}$ is sufficiently large. Their results hold for subgaussian random vectors $X_1,...,X_n$ and therefore $\alpha$ depends on the largest $\Psi$-Orlicz norm of all  $2$-dimensional subvectors of $X_1$, where $\Psi(z)=\exp(z^2)-1$. Hence, the threshold estimator requires quantitative prior information about the subgaussian tails. For general classes of distributions on $\R^p$ (with a least two moments) the situation is even more intricate and in a way hopeless. The choice of an effective threshold level inherently needs precise knowledge of the tail behavior of the underlying distribution (\cite{Karoui2008}). Typically, this information is not available in advance.

On the contrary, in elliptical models there exists at least an estimator for the entries of the correlation matrix with a universal tail behavior. The foundation of this observation is the fundamental relation $\sin(\frac{\pi}{2}\tau_{ij})=\rho_{ij}$ between Kendall's tau $\tau_{ij}$ and Pearson's correlation $\rho_{ij}$ of two non-atomic components in an elliptical random vector  (cf. \cite{Hult2002}). Then $\hat\rho_{ij}=\sin(\frac{\pi}{2}\hat\tau_{ij})$ based on Kendell's tau sample correlation $\hat\tau_{ij}$ is a natural estimator for $\rho_{ij}$. By Hoeffding's inequality for U-statistics $\hat\tau_{ij}$ and therefore also $\hat\rho_{ij}$ have quantifiable subgaussian tails. This enables to identify reasonable threshold levels $\alpha(\frac{\log p}{n})^{1/2}$ for a threshold estimator based on Kendall's tau. Thereto, we consider the following model. Let  $X_1,...,X_n\in\R^p$ be an i.i.d. sample from an elliptical distribution with correlation matrix $\rho$ and Kendall's tau correlation matrix $\tau$ such that the components $X_{i1}$, $i=1,...,p$, have a non-atomic distribution. Under the unknown row-wise sparsity condition on $\rho$ we seek the least sparse estimator among the family of entrywise threshold estimators $\{\hat\rho^\ast_\alpha|\alpha>0\}$ attaining the optimal speed of convergence, where $\hat\rho_\alpha^\ast=:\hat\rho^\ast=(\hat\rho^\ast_{ij})$ consists of the entries \[\hat\rho_{ij}^\ast:=\sin\left(\frac{\pi}{2}\hat\tau_{ij}\right)\ind\left(|\hat\tau_{ij}|>\alpha\sqrt{\frac{\log p}{n}}\right) \text{ for }i\neq j\text{ and }\hat\rho^\ast_{ii}=1.\]

It is shown that $\hat\rho^\ast$ achieves the minimax rate $c_{n,p}(\frac{\log p}{n})^{1-q/2}$ under the squared Frobenius norm loss if $\alpha>2$, where the parameters $c_{n,p}$ and $q$ depend on the class of sparse correlation matrices. Under the squared spectral norm $\hat\rho^\ast$ attains the minimax rate $c_{n,p}^2(\frac{\log p}{n})^{1-q}$ for any $\alpha>2\sqrt{2}$.

 Threshold estimators with small threshold constants are meaningful because of two essential reasons. On the one hand the smaller the threshold constant is the more dependency structure is captured by the estimator, on the other hand small threshold constants are of practical interest. To discuss the second reason let us suppose that we have a threshold level $10(\frac{\log p}{n})^{1/2}$,  for instance. Then, we already need a sample size larger than a hundred just to compensate the threshold constant $10$, where we have not even considered the dimension $p$ which is typically at least in the thousands. Thus, in practice moderate threshold constants may lead to a trivial estimator which provides the identity as an estimate for the correlation matrix, no matter what we actually observe.

\cite{Han2013} and \cite{Wegkamp2013} recently worked on a subject related to the issue of this article. \cite{Han2013} studied the problem of estimating the generalized latent correlation matrix of a so called transelliptical distribution, which was introduced by the same authors in \cite{Han2014}. They call a distribution transelliptical if under monotone transformations of the marginals the transformed distribution is elliptical. Then the generalized latent correlation matrix is just the correlation matrix of the transformed distribution. \cite{Wegkamp2013} investigate the related problem of estimating the copula correlation matrix in an elliptical copula model. \cite{Han2013} study the rate of convergence to the generalized latent correlation matrix for an transformed version of Kendall's tau sample correlation matrix without any additional structural assumptions on the transelliptical distribution whereas  \cite{Wegkamp2013} additionally consider copula correlation matrices with spikes. The authors propose an adaptive estimator based on Kendall's tau correlation matrix for this statistical problem. 

We further investigate the elliptical model introduced above under the additional constraint that the observations are Gaussian. In this case we even establish a critical threshold constant under the squared Frobenius norm, i.e. we identify a constant $\alpha^\ast>0$ such that the proposed estimator attains the minimax rate for $\alpha>\alpha^\ast$ but in general not for $\alpha<\alpha^\ast$.  The critical value $\alpha^\ast$ for estimation of $\rho$ is given by $\frac{2\sqrt{2}}{3}$ and therefore by choosing $\alpha$ slightly larger than $\alpha^\ast$ the corresponding estimator is not only rate optimal but provides a non-trivial estimate of the true correlation matrix even for moderate sample sizes $n$. Compared to Gaussian observations, for elliptically distributed random vectors the critical threshold level is not known exactly but lies in the interval $[\frac{2\sqrt{2}}{3},2]$. Furthermore, for $\alpha<\frac{2}{3}$ the considered estimator does not even attain the rate $c_{n,p}(\frac{\log p}{n})^{1-q/2}$ over any regarded set of sparse correlation matrices. To the best of the author's knowledge this is the first work concerning critical threshold constants. 


The main ingredient to provide the critical threshold level is a sharp large deviation result for Kendall's tau sample correlation if the underlying $2$-dimensional normal distribution has weak correlation between the components, which says  that even sufficiently far in the tails of the distribution of Kendall's tau sample correlation the Gaussian approximation induced by the central limit theorem for Kendall's tau sample correlation applies. This result is evolved from an asymptotic expansion of the number of permutations with a certain number of inversions (see \cite{Clark2000}).

Since Kendall's tau sample correlation is invariant under strictly increasing transformations of the components of the observations, it is easy to see that the investigation of this article covers more general statistical problems.
Specifically, the results of Section 4 extend to the estimation of sparse latent generalized correlation matrices in nonparanormal distributions (\cite{Liu2009}, \cite{Liu2012}, \cite{Xue2012}, \cite{Han2013b}). Moreover the results of Section 3 hold for the estimation of sparse latent generalized correlation matrices of meta-elliptical distributions (\cite{Fang2002}) and transelliptical distributions (\cite{Han2014}) as well as for the estimation of sparse copula correlation matrices for certain elliptical copula models (\cite{Wegkamp2013}). For ease of notation we prefer to confine the exposition to elliptical and Gaussian observations.  

\subsection{Relation to other literature}
This article is related to the works of \cite{Wegkamp2013} and \cite{Han2013} about high-dimensional correlation matrix estimation based on Kendall's tau sample correlation matrix. In contrast to their works we assume the correlation matrices to be sparse and equip them with the same weak $\ell_q$-ball sparsity condition on the rows as  \cite{Caib} do for covariance matrices. We replace the sample covariance matrix in the hard threshold estimator of \cite{Bickel2008b} by a transformed version of Kendall's tau sample correlation matrix. In contrast to \cite{Caib} we are mainly interested in threshold levels for which the proposed estimator attains the minimax rate. In other words, the central question of this paper is how much information from the pilot estimate is permitted to retain under the restriction to receive a rate optimal estimator. In a manner, this enables us to recognize as much dependence structure in the data as possible without overfitting. Note that the permissible threshold constants for covariance matrix estimation in the inequalities (26) and (27) in \cite{Caib} based on \cite{Saulis1991} and \cite{Bickel2008} are not given explicitly and therefore it is even vague if any practically applicable universal threshold estimator attains the minimax rate. Besides, the calibration of the threshold level for covariance matrix estimation based on the sample covariance matrix strongly depends on the tails of the underlying distribution. In contrast, the entries of the proposed correlation matrix estimator have universal tails. 

Recently, in a semiparametric Gaussian copula model \cite{Xue2014} proved that a related threshold estimator for the correlation matrix based on Spearman's rank correlation is minimax optimal if the threshold level is equal to $40\pi(\log p/n)^{1/2}$. However, this value is far away from being of practical relevance. Hence, it is a natural question to ask how large the threshold constant in fact needs to be to get an adaptive rate optimal estimator. We answer this question in the model stated above. 

\cite{Cai2011} give a lower bound on the threshold level of a threshold estimator for covariance matrices with an entry-wise adaptive threshold level such that the minimax rate under the spectral norm is attained. However, since the entries of the proposed correlation matrix estimator in this article have universal tails, a universal threshold level seems adequate here.

As a by-product of the derivation of the minimax rate for sparse correlation matrix estimation we close a gap in the proof of the minimax lower bounds of \cite{Caib} for sparse covariance estimation since their arguments do not cover certain sparsity classes. Under Bregman divergences this affects the minimax rate for sparse covariance matrix estimation. More details are given in Section 3.


\subsection{Structure of the article}
The article is structured as follows. In the next section we clarify the notation and the setting of the model. Moreover, we give a brief introduction to elliptical distributions and Kendall's tau correlation. In the third Section we discuss the minimax rates of estimation for correlation matrices under the Frobenius norm and the spectral norm in the underlying model. The fourth Section is devoted to the main results of the article. We establish the critical threshold constants for Gaussian observations regarding the minimax rates from Section 3. The proofs of the results from Section 3 and 4 are postponed to Section 7. In Section 5 we present the main new ingredients to obtain the critical threshold level of Section 4, especially we provide a sharp large deviation result for Kendall's tau sample correlation under two-dimensional normal distributions with weak correlation. Finally in Section 6 the results of the article are summarized and some related problems are discussed.



\section{Preleminaries and model specification}\label{sec:pre}
\subsection{Notation}
We write $X\overset{d}{=}Y$ if the random variables $X$ and $Y$ have the same distribution. If $X$ is a discrete real-valued random variable then we write $\Im(X)$ for the set of all $x\in\R$ such that $\P(X=x)>0$. A sample $Y_1,...,Y_n$ of real valued random variables will be often abbreviated by $Y_{1:n}$. Moreover $\phi$ and $\Phi$ denote the probability density and cumulative distribution function of the standard normal distribution.

For a vector $x\in\R^p$ the mapping  $i\mapsto [i]_{x}=:[i]$ is a bijection on $\{1,...,p\}$ such that
\begin{align*}
|x_{[1]}|\ge...\ge |x_{[p]}|.
\end{align*}

Clearly $[\cdot]$ is uniquely determined only if $|x_i|\neq |x_j|$ for all $i\neq j$. The $q$-norm of a vector $x\in\R^p$, $q\ge 1$, is denoted by 
 \begin{align*}
 \|x\|_q:=\left(\sum_{i=1}^px_i^q\right)^{\frac{1}{q}}.
 \end{align*}
\noindent This notation will also be used if $0<q<1$. Then, of cause, $\|\cdot\|_q$ is not a norm anymore. For $q=0$ we write $\|x\|_0$ for the support of the vector $x\in \R^p$, that means $\|x\|_0$ is the number of nonzero entries of $x$, which is meanwhile a common notation from compressed sensing (see \cite{Donoho2006} and \cite{Candes2010}).

Let $f:\R\rightarrow\R$ be an arbitrary function. Then to apply the function elementwise on the matrix $A\in\R^{p\times d}$ we write  $f[A]:=(f(A_{ij}))_{i\in\{1,...,p\},j\in\{1,...,d\}}$. The Frobenius norm of $A$ is defined by $\|A\|_F^2:=\sum_{i,j}A_{ij}^2$. Moreover, the $\ell_q$ operator norm of $A$ is given by $\|A\|_{q}:=\sup_{\|x\|_q=1}\|Ax\|_q$. Recall the inequality $\|A\|_2\le \| A \|_1$ for symmetric matrices $A\in\R^{p\times p}$. For $A\in\R^{p\times p}$ we denote the $i$-th row of $A$ without the diagonal entry by $A_i\in\R^{p-1}$.

We will regularize correlation matrices by the threshold operator $T_\alpha:=T_{\alpha,n}$, $\alpha>0$, where $T_{\alpha,n}$ is defined on the set of all correlation matrices and satifisies for any correlation matrix $A\in\R^{p\times p}$ that $T_\alpha(A)_{ii}=1$ and $T_\alpha(A)_{ij}=A_{ij}\ind(|A_{ij}|>\alpha (\frac{\log p}{n})^{1/2})$, $i\neq j$.  

$C>0$ denotes a constant factor in an inequality that does not depend on any variable contained in the inequaltity, in other words for the fixed value $C>0$ the corresponding inequality holds uniformly in all variables on the left and right handside of the inequality. If we want to allow $C$ to depend on some parameter we will add this parameter to the subscript of $C$. In some computations $C$ may differ from line to line. $O,o$ are the usual Landau symbols. Finally we write $[a]$ for the largest integer not greater than $a\in \R$.


\subsection{Sparsity condition}
As mentioned earlier we want to assume that the correlation matrix and Kendall's tau correlation matrix satisfy a sparsity condition. There are several possibilities to formulate such a sparsity assumption. One way is to reduce the permissible correlation matrices to a set
\begin{align*}
\G_{q}(c_{n,p}):=\{A=(a_{ij})_{1\le i,j\le p}:A=A^T,a_{jj}=1,A_i\in B_{q}(c_{n,p}),~i=1,...,p\},
\end{align*}
where $B_q(c_{n,p}),~0\le q<1$ is the $\ell_q$-ball of radius $c_{n,p}>0$. \cite{Bickel2008} used this kind of sparsity condition for covariance matrix estimation. Though, it is more handy to assume that each row $A_i$ is an element of a weak $\ell_q$-ball $B_{\text{w},q}(c_{n,p}),~0\le q<1,~c_{n,p}> 0$ instead, in other words for $x:=A_i$ we have $|x_{[i]}|^q\le c_{n,p}i^{-1}$.
Weak $\ell_q$-balls were originally introduced by \cite{Abramovich2006} in a different context. Note that the $\ell_q$-ball $B_q(c_{n,p})$ is contained in the weak $\ell_q$-ball $B_{\text{w},q}(c_{n,p})$. Nevertheless the complexity of estimating a correlation matrix over $\G_{\text{w},q}(c_{n,p})$ is the same as over $\G_{q}(c_{n,p})$,
where
\begin{align*}
\G_{\text{w},q}(c_{n,p}):=\{A=(a_{ij})_{1\le i,j\le p}:A=A^T,a_{jj}=1,A_i\in B_{\text{w},q}(c_{n,p}),~i=1,...,p\}.
\end{align*}

The reader is referred to \cite{Caib} for analogous statements for covariance matrix estimation. Therefore throughout the paper we will consider the case that $\rho\in \G_{\text{w},q}(c_{n,p}).$


\subsection{Elliptical distributions}
Commonly a random vector $Y\in \R^p$ is called elliptically distributed if its characteristic function $\varphi_Y$ is  of the form
\begin{align*}
\varphi_Y(t)=e^{it^T\mu}\psi(t^T \bar\Sigma t),~t\in\R^p,
\end{align*}
for a positive semi-definite matrix $\bar\Sigma\in\R^{p\times p}$ and a function $\psi$.
The following representation for elliptically distributed random vectors will be convenient for our purposes, which may be found for example in \cite{Fang1990}:
\begin{proposition}\label{pro:ell}
A random vector $X\in\R^p$ has an elliptical distribution iff for a matrix $A\in\R^{p\times q}$ with $\rank(A)=q$, a vector $\mu\in\R^p$, a non-negative random variable $\xi$ and random vector $U\in \R^q$, where $\xi$ and  $U$ are independent and $U$ is uniformly distributed on the unit sphere $\S^{q-1}$, $X$ has the same distribution as $\mu+\xi A U$.
\end{proposition}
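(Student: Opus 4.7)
The plan is to prove both directions by manipulating characteristic functions, with the rotational symmetry of the uniform distribution on the sphere as the central tool. First I would record that for $U$ uniformly distributed on $\S^{q-1}$, orthogonal invariance of its law forces the characteristic function $t \mapsto \E[e^{it^T U}]$ to depend only on $\|t\|_2$; I denote this value by $\Omega_q(\|t\|_2^2)$. A simple scaling then gives $\E[e^{is(A^T t)^T U}] = \Omega_q(s^2 t^T AA^T t)$ for any $s \in \R$ and $t \in \R^p$.

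For the sufficiency direction I would condition on $\xi$ and use independence of $\xi$ and $U$ to compute
\begin{align*}
\varphi_X(t) = e^{it^T\mu}\, \E\!\left[e^{i\xi(A^T t)^T U}\right] = e^{it^T\mu}\, \E\!\left[\Omega_q\!\left(\xi^2 t^T AA^T t\right)\right].
\end{align*}
Setting $\bar\Sigma := AA^T$, which is positive semi-definite of rank $q$, and $\psi(s) := \E[\Omega_q(\xi^2 s)]$ yields precisely the elliptical characteristic function form.

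For the necessity direction I would start from $\varphi_X(t) = e^{it^T\mu}\psi(t^T \bar\Sigma t)$ and factor $\bar\Sigma = AA^T$ with $A \in \R^{p\times q}$ of full column rank $q$, for instance via the spectral decomposition of $\bar\Sigma$. Next I would introduce an auxiliary random vector $Y \in \R^q$ whose characteristic function is $u \mapsto \psi(\|u\|_2^2)$; this is a legitimate characteristic function on $\R^q$ because such $Y$ can be realized as $A^+(X-\mu)$, where $A^+$ denotes the Moore-Penrose pseudo-inverse. The law of $Y$ is then orthogonally invariant on $\R^q$. Conditioning on $R := \|Y\|_2$, the conditional law of $Y/R$ on $\S^{q-1}$ inherits the rotational invariance and is thus uniform on $\S^{q-1}$, and hence independent of $R$. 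Setting $\xi := R$ and $U := Y/R$ (arbitrarily extended on $\{R = 0\}$) gives the scale mixture $Y \overset{d}{=} \xi U$. Finally the identity $\varphi_{\mu + AY}(t) = e^{it^T\mu}\psi(t^T AA^T t) = \varphi_X(t)$ yields $X \overset{d}{=} \mu + \xi AU$.

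The step I expect to be the main obstacle is the verification that $u \mapsto \psi(\|u\|_2^2)$ is a bona fide characteristic function on $\R^q$ in the necessity direction; the rank condition $\rank(\bar\Sigma) = q$ is essential here, since it permits the reduction from $\R^p$ to $\R^q$ via the pseudo-inverse construction, and without it one would have to interpret the representation in a degenerate sense. Once this existence is granted, the decomposition of spherically symmetric laws into independent radial and angular components is standard, and the remaining manipulations reduce to the bookkeeping indicated above.
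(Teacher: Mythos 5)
The paper does not prove this proposition at all: it is stated as a known fact and attributed to the monograph of Fang, Kotz and Ng (1990), so there is no in-paper argument to compare against. Your derivation is a correct and essentially standard proof of this classical stochastic-representation theorem. The sufficiency computation via conditioning on $\xi$ and the spherical-symmetry reduction $\E\bigl[e^{iv^TU}\bigr]=\Omega_q(\|v\|_2^2)$ is exactly right, and in the necessity direction the key observation that $Y:=A^+(X-\mu)$ has characteristic function $u\mapsto\psi(\|u\|_2^2)$ is valid because $A^+A=I_q$ when $A$ has full column rank $q$, so $A^+\bar\Sigma(A^+)^T=I_q$. The subsequent radial--angular decomposition of the spherically symmetric $Y$ into independent $\xi:=\|Y\|_2$ and $U:=Y/\|Y\|_2$ (arbitrary on $\{Y=0\}$) is the standard disintegration argument. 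Two small points worth making explicit if you wrote this up fully: the spherical symmetry of $Y$ forces $\psi$ to be real-valued (so there is no hidden phase to track), and the statement that $Y/\|Y\|_2$ is uniform on $\S^{q-1}$ and independent of $\|Y\|_2$ should be justified by noting that for every orthogonal $O$ and every radius the conditional law is $O$-invariant, which pins it down as normalized surface measure for almost every radius. With those remarks the argument is complete and matches the textbook treatment the paper is implicitly relying on.
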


Therefore we write $X=(X_1,...,X_p)^T\sim EC_p(\mu,\Sigma,\xi)$ if $X$ has the same distribution as $\mu+\xi A U$, where $A\in\R^{p\times q}$ satisfies $AA^T=\Sigma$.


\subsection{Kendall's tau}
Let $(Y,Z)$ be a two-dimensional random vector. We denote Kendall's tau correlation between $Y$ and $Z$ by
\begin{align*}
\tau(Y,Z):&=\P((Y-\tilde Y)(Z-\tilde Z)>0)-\P((Y-\tilde Y)(Z-\tilde Z)<0)\\
&=\E \sign(Y-\tilde Y)(Z-\tilde Z)\\
&=\Cov(\sign(Y-\tilde Y),\sign(Z-\tilde Z)),
\end{align*}
where $(\tilde Y,\tilde Z)$ is an independent copy of $(Y,Z)$ and $\sign x:=\ind(x>0)-\ind(x<0)$. Its empirical version based on an i.i.d. sample $(Y_1,Z_1),...,(Y_n,Z_n)\overset{d}{=}(Y,Z)$ is called Kendall's tau sample correlation and given by
\begin{align*}
\hat\tau(Y_{1:n},Z_{1:n}):=\frac{1}{n(n-1)}\sum_{\substack{k,l=1\\k\neq l}}^n\sign(Y_k-Y_l)\sign(Z_k-Z_l).
\end{align*}
Analogously we write $\rho(X,Y)$ for the (Pearson's) correlation between $X$ and $Y$. For a nondegenerate elliptically distributed random vector $(X,Y)\sim EC_p(\mu,\Sigma,\xi)$ we define the (generalized) correlation $\rho(X,Y)$ even if the second moments of the components do not exist. Thereto let $\rho(X,Y):=\frac{\Sigma_{12}}{\sqrt{\Sigma_{11}\Sigma_{22}}}$. Obviously, this expression is equivalent to the usual definition of correlation if the second moments exist.\\
Let $X_1=(X_{11},...,X_{p1})^T$ be a $p$-dimensional random vector. We denote the Kendall's tau correlation matrix of $X_1$ by $\tau:=(\tau_{ij})$, where $\tau_{ij}=\tau(X_{i1},X_{j1})$. Thus, $\tau$ is positive semidefinite since
\begin{align*}
\tau=\Cov(\sign[X_1-\tilde X_1],\sign[X_1- \tilde X_1]).
\end{align*}
Moreover for an i.i.d. sample $X_1,...,X_n\in\R^p$ we call $\hat\tau=(\hat\tau_{ij})$ with 
\begin{align*}
\hat\tau:=\frac{1}{n(n-1)}\sum_{k.l=1}^n\sign[X_k-X_l]\sign[X_k-X_l]^T
\end{align*}
Kendall's tau sample correlation matrix. Hence $\hat\tau$ is positive semi-definite. Furthermore if the distributions of the components of $X_1$ have no atoms, then $\tau$ (resp. $\hat\tau$) is (a.s.) a correlation matrix. Note that the distributions of the components of $X_1$ have no atoms iff either $\rank(\Sigma)=1$ and the distribution of $\xi$ has no atoms or $\rank(\Sigma)\ge2$ and $\P(\xi=0)=0$. If $EC_p(\mu,\Sigma,\xi)$ is a distribution with non-atomic components then Kendall's tau correlation matrix $\tau$ is determined by the correlation matrix $\rho$, particularly we have $\rho=\sin\left[\frac{\pi}{2}\tau\right]$ (see \cite{Hult2002}). Hence, $\sin\left[\frac{\pi}{2}\hat\tau\right]$ is a natural estimator for $\rho$. In the following we will occasionally use the next two elementary lemmas to connect the correlation matrix to Kendall's tau correlation matrix:
\begin{lemma}\label{lem:boukencor}
For any matrices $A,B\in\R^{p\times p}$ holds
 \begin{align*}
 \|\sin[\frac{\pi}{2}A]-\sin[\frac{\pi}{2}B]\|_{F}&\le \frac{\pi}{2}\|A-B\|_{F},\\
  \|\sin[\frac{\pi}{2}A]-\sin[\frac{\pi}{2}B]\|_{1}&\le \frac{\pi}{2}\|A-B\|_{1}.
 \end{align*}
 \end{lemma}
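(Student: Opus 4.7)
The plan is to reduce everything to the pointwise Lipschitz property of the sine function, since both norms in the statement are computed entrywise or at worst as a row/column sum of absolute entries.

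First I would record the elementary one-dimensional fact: for all $x,y\in\R$, $|\sin x-\sin y|\le |x-y|$. This follows from the mean value theorem together with $|\cos|\le 1$, or equivalently from $\sin x-\sin y=2\sin\frac{x-y}{2}\cos\frac{x+y}{2}$ and $|\sin u|\le |u|$. Applied to $x=\frac{\pi}{2}A_{ij}$ and $y=\frac{\pi}{2}B_{ij}$, this yields the entrywise bound
\begin{equation*}
\bigl|\sin[\tfrac{\pi}{2}A]_{ij}-\sin[\tfrac{\pi}{2}B]_{ij}\bigr|\le \tfrac{\pi}{2}|A_{ij}-B_{ij}|\qquad\text{for all }i,j.
\end{equation*}

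For the Frobenius bound I would simply square and sum:
\begin{equation*}
\|\sin[\tfrac{\pi}{2}A]-\sin[\tfrac{\pi}{2}B]\|_F^2=\sum_{i,j}\bigl|\sin[\tfrac{\pi}{2}A]_{ij}-\sin[\tfrac{\pi}{2}B]_{ij}\bigr|^2\le \tfrac{\pi^2}{4}\sum_{i,j}|A_{ij}-B_{ij}|^2=\tfrac{\pi^2}{4}\|A-B\|_F^2,
\end{equation*}
and take square roots.

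For the $\ell_1$ operator norm I would recall that $\|M\|_1=\max_{j}\sum_{i}|M_{ij}|$ is the maximum absolute column sum. Then applying the entrywise Lipschitz bound column by column gives
\begin{equation*}
\|\sin[\tfrac{\pi}{2}A]-\sin[\tfrac{\pi}{2}B]\|_1=\max_j\sum_i\bigl|\sin[\tfrac{\pi}{2}A]_{ij}-\sin[\tfrac{\pi}{2}B]_{ij}\bigr|\le \tfrac{\pi}{2}\max_j\sum_i|A_{ij}-B_{ij}|=\tfrac{\pi}{2}\|A-B\|_1.
\end{equation*}

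There is no real obstacle here; the only mild point to be careful about is that both $\|\cdot\|_F$ and $\|\cdot\|_1$ are monotone with respect to taking absolute values entrywise, which is what allows the entrywise Lipschitz bound to pass to the matrix-norm level. Everything else is routine.
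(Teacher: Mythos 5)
Your proof is correct and follows essentially the same route as the paper: both reduce to the Lipschitz bound $|\sin(\frac{\pi}{2}x)-\sin(\frac{\pi}{2}y)|\le\frac{\pi}{2}|x-y|$, then pass it through the entrywise sum defining $\|\cdot\|_F$ and the maximum absolute column sum defining $\|\cdot\|_1$. The only cosmetic difference is that you spell out a derivation of the one-dimensional Lipschitz inequality, whereas the paper simply asserts it.
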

\begin{proof}
The function $x\mapsto \sin(\frac{\pi}{2}x)$ is Lipschitz continuous with Lipschitz constant $L=\frac{\pi}{2}$. Therefore we conclude
\begin{align*}
 \|\sin[\frac{\pi}{2}A]-\sin[\frac{\pi}{2}B]\|_{F}^2&=\sum_{i,j}(\sin(\frac{\pi}{2}A_{ij})-\sin(\frac{\pi}{2}B_{ij}))^2\\
 &\le \sum_{i,j}\frac{\pi^2}{4}(A_{ij}-B_{ij})^2=\frac{\pi^2}{4}\|A-B\|_{F}^2.
\end{align*}
Analogously,
\begin{align*}
 \|\sin[\frac{\pi}{2}A]-\sin[\frac{\pi}{2}B]\|_{1}&=\max_{j=1,\dots,p}\sum_{i=1}^p|\sin(\frac{\pi}{2}A_{ij})-\sin(\frac{\pi}{2}B_{ij})|\\
 &\le \max_{j=1,\dots,p} \frac{\pi}{2}\sum_{i=1}^p|A_{ij}-B_{ij}|\\
 &=\frac{\pi}{2}\|A-B\|_1.
\end{align*}
\end{proof}
\begin{lemma}\label{lem:spatra}
Let $\tau\in \G_{\text{w},q}(c_{n,p})$ for $c_{n,p}>0$, then $\sin[\frac{\pi}{2}\tau]\in \G_{\text{w},q}\left((\frac{\pi}{2})^q c_{n,p}\right)$. On the other hand, for $\sin[\frac{\pi}{2}\tau]\in \G_{\text{w},q}(c_{n,p})$ holds $\tau\in \G_{\text{w},q}(c_{n,p})$.
\end{lemma}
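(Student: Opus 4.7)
The plan is to reduce the statement to two elementary one-dimensional facts and one order-preservation observation. First, on $[-1,1]$ the function $x\mapsto \sin(\tfrac{\pi}{2}x)$ is odd and strictly increasing, so $|\sin(\tfrac{\pi}{2}x)|$ is a strictly increasing function of $|x|$. In particular, for every row index $i$ the bijection $[\cdot]_{\tau_i}$ that sorts the entries of $\tau_i$ by decreasing absolute value also sorts the entries of $\sin[\tfrac{\pi}{2}\tau]_i$ by decreasing absolute value. So we may fix $i$ and write $[k]:=[k]_{\tau_i}=[k]_{\sin[\frac{\pi}{2}\tau]_i}$.

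Second, for every $x\in[-1,1]$ one has the two-sided bound
\begin{align*}
|x|\;\le\;\bigl|\sin\bigl(\tfrac{\pi}{2}x\bigr)\bigr|\;\le\;\tfrac{\pi}{2}|x|,
\end{align*}
where the upper inequality is just $|\sin y|\le|y|$ applied to $y=\tfrac{\pi}{2}x$, and the lower inequality is Jordan's inequality $\sin y\ge \tfrac{2y}{\pi}$ on $[0,\tfrac{\pi}{2}]$ applied to $y=\tfrac{\pi}{2}|x|$. Since all entries of $\tau$ lie in $[-1,1]$, these bounds apply entrywise.

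Combining the two observations, for the first direction assume $\tau\in\G_{\text{w},q}(c_{n,p})$; then for each row $i$ and each $k$,
\begin{align*}
\bigl|\sin\bigl(\tfrac{\pi}{2}\tau_{i,[k]}\bigr)\bigr|^q\;\le\;\bigl(\tfrac{\pi}{2}\bigr)^q|\tau_{i,[k]}|^q\;\le\;\bigl(\tfrac{\pi}{2}\bigr)^q c_{n,p}\,k^{-1},
\end{align*}
which is exactly the weak $\ell_q$-ball condition with radius $(\tfrac{\pi}{2})^q c_{n,p}$ for the $i$-th row of $\sin[\tfrac{\pi}{2}\tau]$. For the converse direction assume $\sin[\tfrac{\pi}{2}\tau]\in\G_{\text{w},q}(c_{n,p})$; the lower bound in the displayed inequality yields
\begin{align*}
|\tau_{i,[k]}|^q\;\le\;\bigl|\sin\bigl(\tfrac{\pi}{2}\tau_{i,[k]}\bigr)\bigr|^q\;\le\;c_{n,p}\,k^{-1}.
\end{align*}
The symmetry and unit diagonal of $\sin[\tfrac{\pi}{2}\tau]$ are immediate from those of $\tau$ and from $\sin(\tfrac{\pi}{2})=1$, completing both memberships. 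There is no real obstacle; the only point worth highlighting is that the same permutation $[\cdot]$ works for $\tau_i$ and $\sin[\tfrac{\pi}{2}\tau]_i$, which is what makes the weak-$\ell_q$ indices transfer directly. The boundary case $q=0$ is covered by the same argument since $\sin(\tfrac{\pi}{2}x)=0$ iff $x=0$ on $[-1,1]$, so the supports of $\tau_i$ and $\sin[\tfrac{\pi}{2}\tau]_i$ coincide.
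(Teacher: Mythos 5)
Your proof is correct and uses essentially the same two bounds as the paper: $|\sin(\tfrac{\pi}{2}x)|\le\tfrac{\pi}{2}|x|$ (derivative bound) for the forward direction and $|\sin(\tfrac{\pi}{2}x)|\ge|x|$ on $[-1,1]$ (concavity/Jordan) for the converse. The paper's one-line proof leaves the order-preservation of the sorting permutation and the $q=0$ case implicit; your write-up just makes these explicit.
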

\begin{proof}
The first statement follows easily by the fact that the derivative of the sine function is bounded by 1. The second statement is obtained by concavity of the sine function on $\left[0,\frac{\pi}{2}\right]$ and convexity of the sine function on $\left[-\frac{\pi}{2},0\right]$.
\end{proof}


\subsection{Further model specification and the regarded threshold estimators}
   For a better overview we summarize the assumptions of the results in Section $\ref{sec:min}$ and $\ref{sec:cri}$. 

\begin{itemize}
\item[$(A_1)$] $X_1,...,X_n\widesim{i.i.d.} EC_p(\mu,\Sigma,\xi)$ such that the distributions of the components $X_{i1}$ have no atoms.
\item[$(A_1^\ast)$] $X_1,...,X_n\widesim{i.i.d.} \mathcal{N}_p(\mu,\Sigma)$ such that $\Sigma_{ii}>0$ for all $i=1,...,p$.
\item[$(A_2)$] The parameters of the set  $\mathcal{G}_{\text{w},q}(c_{n,p})$ satisfy $0\le q <1$ and $c_{n,p}>0$. For ease of notion $c_{n,p}$ is a postive integer if $q=0$. 
\item[$(A_3)$] There exists a constant $M>0$ such that
\[c_{n,p} \le  M n^{(1-q)/2}(\log p)^{-(3-q)/2}.\]
\item[$(A_3^\ast)$] There exists a constant $v>0$ such that
\[c_{n,p}\le \left(v\left(\frac{\log p}{n}\right)^{q/2}\right)\wedge \left(M n^{(1-q)/2}(\log p)^{-(3-q)/2}\right).\]
\item[$(A_4)$] There exists a constant $m<0$ such that the radius $c_{n,p}\ge m\left(\frac{\log p}{n}\right)^{q/2}$.
\item[$(A_5)$] There exists a constant $\eta_l>1$ such that $p>n^{\eta_l}>1$.
\item[$(A_6)$] There exists a constant $\eta_u>1$ such that $p<n^{\eta_u}$.
\end{itemize}

The Assumptions $(A_1), (A_3)$ and $(A_5)$ are sufficient to ensure that the minimax lower bound is true. For an upper bound on the maximal risk of the considered estimators $(A_3)$ and $(A_5)$ are not required. Assumptions $(A_1^\ast)$ and $(A_6)$ guarantee that the entries $\hat\tau_{ij}$ based on components with weak correlation satisfy a Gaussian approximation even sufficiently far in the tails. This is essential to provide the critical threshold level.

Our investigation involves two highly related threshold estimators based on Kendall's tau sample correlation matrix $\hat\tau$. 
Based on Kendall's tau correlation matrix two natural estimators appear for correlation matrix estimation. We consider both, $\hat\rho^\ast:=\hat\rho^\ast_\alpha:=\sin[\frac{\pi}{2}\hat\tau^\ast]$ with $\hat\tau^\ast:=\hat\tau^\ast_\alpha:=T_\alpha(\hat\tau)$ from the introductory section and $\hat\rho:=\hat\rho_\alpha:=T_\alpha(\sin[\frac{\pi}{2}\hat\tau])$. The difference between them is the order of thresholding and transformation by the sine function. Technically it is favorable to deduce the properties of $\hat\rho$ from $\hat\rho^\ast$.

In the next section $\hat\rho$ may also denote an arbitrary estimator. The meaning of the notation is obtained from the context.


\section{Minimax rates of estimation for sparse correlation matrices}\label{sec:min}

As already mentioned before we want to study the minimax rates under the squared Frobenius norm and the squared spectral norm such that $\rho$  lies in a fixed class $\G_{\text{w},q}(c_{n,p})$, i.e. we bound
\begin{align*}
\underset{\hat\rho}{\inf} \underset{\rho\in \G_{\text{w},q}(c_{n,p})}{\sup}\frac{1}{p}\E\|\hat\rho-\rho\|_F^2~\text{ and }~\underset{\hat\rho}{\inf} \underset{\rho\in \G_{\text{w},q}(c_{n,p})}{\sup}\E\|\hat\rho-\rho\|_2^2,
\end{align*} 
where the infimum is taken over all estimators for $\rho$. In the supremum we have a slight abuse of notation, since the maximal risks
\begin{align*}
 \underset{\rho\in \G_{\text{w},q}(c_{n,p})}{\sup} \frac{1}{p}\E\|\hat\rho-\rho\|_F^2~\text{ and }~\underset{\rho\in \G_{\text{w},q}(c_{n,p})}{\sup}\E\|\hat\rho-\rho\|_2^2
 \end{align*}
  of a fixed estimator $\hat\rho$ are to read as the supremum over all permissible elliptical distributions $EC_p(\mu,\Sigma,\xi)$ for the underlying sample $X_1,...,X_n$ of i.i.d. observations such that $\rho$ lies in  $\G_{\text{w},q}(c_{n,p})$. 
	
We first present the sharp minimax lower bounds of estimation for the correlation matrix.
\begin{theorem}[Minimax lower bound of estimation for sparse correlation matrices]\label{the:minlowboucor}\label{the:minlowbouken}
Under the assumptions $(A_1)-(A_3)$ and $(A_5)$ the following minimax lower bounds hold\\[-0.45cm]
\begin{align}\label{eqn:minlowsam}
\underset{\hat\rho}{\inf} \underset{\rho\in \G_{\text{w},q}(c_{n,p})}{\sup}\frac{1}{p}\E\|\hat\rho-\rho\|_F^2\ge C_{M,\eta_l,q}c_{n,p}\left(c_{n,p}^{\frac{2}{q}}\wedge\frac{\log p}{n}\right)^{1-\frac{q}{2}}
\end{align}
 \\[-0.4cm]
\noindent and
 \\[-0.5cm]
\begin{align}\label{eqn:minlowsam2}
\underset{\hat \rho}{\inf} \underset{\rho\in \G_{\text{w},q}(c_{n,p})}{\sup}\E\|\hat\rho-\rho\|_2^2\ge \tilde C_{M,\eta_l,q}c_{n,p}^2\left(c_{n,p}^\frac{2}{q}\wedge\frac{\log p}{n}\right)^{1-q}
\end{align}
for some constants $C_{M,\eta_l,q},\tilde C_{M,\eta_l,q}>0$.
\end{theorem}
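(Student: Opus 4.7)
The strategy is to restrict to the Gaussian subclass of the elliptical family --- since centered Gaussians $\mathcal{N}_p(0,\rho)$ with $\rho_{ii}=1$ satisfy $(A_1)$ and have correlation matrix $\rho$ --- so that the $n$-sample KL divergence between two parameters takes the closed form
\begin{equation*}
\mathrm{KL}\bigl(\mathcal{N}_p(0,\Sigma_0)^{\otimes n}\,\|\,\mathcal{N}_p(0,\Sigma_1)^{\otimes n}\bigr)=\tfrac{n}{2}\bigl(\tr(\Sigma_1^{-1}\Sigma_0)-p-\log\deter(\Sigma_1^{-1}\Sigma_0)\bigr),
\end{equation*}
which for $\Sigma_j$ close to $I$ is of order $n\|\Sigma_1-\Sigma_0\|_F^2$. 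The analysis then splits into a dense regime, where $c_{n,p}^{2/q}\geq \log p/n$, and a sparse regime, where $c_{n,p}^{2/q}<\log p/n$, corresponding to the two terms of the minima in \eqref{eqn:minlowsam} and \eqref{eqn:minlowsam2}.

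In the dense regime I would prove the Frobenius bound via Assouad's lemma applied to a hypercube indexed by $\gamma\in\{0,1\}^{pk/2}$, where $\delta\asymp(\log p/n)^{1/2}$ and $k\asymp c_{n,p}\delta^{-q}\geq 1$: each $\gamma$ selects which of $k$ fixed off-diagonal positions per row carry the value $\delta$; the pattern is arranged symmetrically, Assumption $(A_3)$ guarantees $k\delta\ll 1$ so that Gershgorin delivers positive definiteness, and each row lies in $B_{\text{w},q}(c_{n,p})$ by construction. The neighbor KL is controlled by a small multiple of $\log p$ for a sufficiently small constant in $\delta$, and Assouad's lemma yields the per-entry Frobenius rate $k\delta^2\asymp c_{n,p}(\log p/n)^{1-q/2}$. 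For the spectral norm in the dense regime I would amplify this construction by concentrating the signal into a single $k\times k$ block: consider matrices $\rho_\gamma=I+\delta B_\gamma$ with $B_\gamma$ supported on a fixed $k\times k$ sub-block and indexed by a Varshamov--Gilbert packing of $\asymp\exp(c k^2)$ sign patterns with pairwise Hamming distance $\asymp k^2$. The pairwise spectral distance between such $\rho_\gamma$ is then of order $k\delta$ while the per-pair KL is $\lesssim nk^2\delta^2\asymp k^2\log p$, so that Fano's inequality yields the squared spectral rate $(k\delta)^2=c_{n,p}^2(\log p/n)^{1-q}$.

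In the sparse regime I would instead run a Fano-style argument on the $N=p-1$ rank-one perturbations $\rho^{(j)}=I+c_{n,p}^{1/q}(e_1 e_j^T+e_j e_1^T)$ for $j=2,\dots,p$. Each $\rho^{(j)}$ lies in $\G_{\text{w},q}(c_{n,p})$, since the only non-trivial off-diagonal in the first and $j$-th rows has $q$-th power exactly $c_{n,p}$, saturating the weak $\ell_q$-constraint at $i=1$. Pairwise squared Frobenius and spectral distances are both $\asymp c_{n,p}^{2/q}$, and the per-hypothesis KL is $\lesssim nc_{n,p}^{2/q}\leq \log p=\log N$ by the sparse-regime assumption, so Fano's inequality delivers $c_{n,p}^{2/q}$ lower bounds under both norms. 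Taking the maximum of the dense- and sparse-regime bounds then gives \eqref{eqn:minlowsam} and \eqref{eqn:minlowsam2}.

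The main obstacle I anticipate is the dense-regime spectral construction: producing a family of symmetric positive semi-definite correlation matrices whose pairwise spectral separations, Frobenius upper bounds, and weak-$\ell_q$ row constraints are all simultaneously tight is the most delicate combinatorial step, and some care is needed to choose the Varshamov--Gilbert packing so that the lower bound on spectral distance does not cost more than a constant factor. This is essentially the content of the observation (mentioned in Section~1.1) that Cai--Zhou's construction fails to cover certain boundary sparsity classes; the sparse-regime Fano argument above handles precisely the case they overlook, in which the dense-regime parameter $k$ would degenerate, so a careful case split closes that gap.
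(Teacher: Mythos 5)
Your split into a dense and a sparse regime matches the paper's, which invokes \cite{Caib} directly for $c_{n,p}>2v^q(\log p/n)^{q/2}$ and constructs a new family only for $c_{n,p}\le 3(\log p/n)^{q/2}$, and your spectral-norm sparse-regime argument scales correctly. The gap is in your sparse-regime Frobenius bound: a Fano argument on the $N=p-1$ rank-one perturbations $\rho^{(j)}=\Id+c_{n,p}^{1/q}(e_1 e_j^T+e_j e_1^T)$ gives at best $\inf_{\hat\rho}\max_j \E\|\hat\rho-\rho^{(j)}\|_F^2\gtrsim c_{n,p}^{2/q}$, hence $\tfrac{1}{p}\E\|\hat\rho-\rho\|_F^2\gtrsim c_{n,p}^{2/q}/p$, whereas the claimed sparse-regime rate is $\tfrac{1}{p}\E\|\hat\rho-\rho\|_F^2\gtrsim c_{n,p}\bigl(c_{n,p}^{2/q}\bigr)^{1-q/2}=c_{n,p}^{2/q}$ --- larger by a factor of $p$. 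The underlying reason is that every $\rho^{(j)}$ in your family differs from $\Id$ in only $O(1)$ off-diagonal entries, so the trivial estimator $\hat\rho\equiv\Id$ already achieves normalized Frobenius risk $4c_{n,p}^{2/q}/p$ on your family; no lower bound proved against this family alone can exceed that.

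To reach the claimed rate the hypotheses must differ from one another in $\Theta(p)$ off-diagonal entries simultaneously while still having at most one nonzero off-diagonal per row. The paper's construction keeps the effective per-row sparsity $k=1$ but restores the full hypercube over rows: $\Sigma(\theta)=\Id+\varepsilon_{n,p}\sum_{j=1}^{r}\gamma_j(\theta)A(\lambda_j(\theta))$ with $r=[p/2]$, $\gamma\in\{0,1\}^r$, position labels $\lambda_j$ ranging over the right half of the columns, and $\varepsilon_{n,p}\asymp c_{n,p}^{1/q}\wedge\sqrt{\log p/n}$. Because the per-bit KL divergence $n\varepsilon_{n,p}^2$ can be of order $\log p$ rather than $O(1)$, a plain Assouad argument on the $\gamma$-cube still fails; the paper instead reuses the two-directional lower bound of \cite{Caib} (its Lemma 7.1), where the averaging over the positions $\lambda$ keeps the mixture affinity $\min_i\|\bar\P_{i,0}\wedge\bar\P_{i,1}\|$ bounded away from zero, verified by an explicit $\chi^2$ computation via Lemma 7.2. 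Your $N=p-1$ rank-one family is in effect the restriction of this construction to vectors $\gamma$ with exactly one nonzero coordinate and no position mixing, which is precisely why it loses the factor $p$ and why the Cai--Zhou mixture machinery cannot be bypassed even in the sparse regime.
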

The lower bound for estimating $\rho$ over some class $\G_{\text{w},q}(c_{n,p})$ with radius $c_{n,p}>2v^{q}\left(\frac{\log p}{n}\right)^{q/2}$, $v>0$ sufficiently small, is an immediate consequence of the proof of Theorem 4 in \cite{Caib}, where the authors use a novel generalization of Le Cam's method and Assouad's lemma to treat the two-directional problem of estimating sparse covariance matrices. In the proof the authors consider the minimax lower bound over a finite subset of sparse covariance matrices $\mathcal{F}^\ast$ for a normally distributed sample, where the diagonal entries are equal to one. Therefore their proof holds for estimation of correlation matrices too. \cite{Xue2014} use the same argument to transfer the minimax lower bound of \cite{Caib} under the $\ell_1$ and $\ell_2$ operator norms to correlation matrix estimation. But, they do not take into account that the result does not hold for all classes $\G_{\text{w},q}(c_{n,p})$. If $2v^{q}\left(\frac{\log p}{n}\right)^{q/2}\ge c_{n,p}$ the value $k$ on page 2399 of \cite{Caib} is equal to zero which leads to a trivial lower bound. Hence, in this case one needs a different construction of the finite subset of sparse correlation matrices over which the expected loss of an estimator $\hat\rho$ is maximized.
\begin{remark}
While the mentioned gap in the arguments of \cite{Caib} has no effect on the minimax lower bound for covariance matrix estimation under operator norms $\ell_w$,~$w=1,2$, under Bregman divergences the minimax rate is given by
\begin{align}
c_{n,p}\left(c_{n,p}^\frac{2}{q}\wedge\frac{\log p}{n}\right)^{1-\frac{q}{2}}+\frac{1}{n}.\end{align}
Note that the minimax rate for $q=0$ remains unaffected by this consideration.
\end{remark}

The following proposition shows that estimating the correlation matrix by the identity matrix achieves the minimax rate if $c_{n,p}=O\big(\frac{\log p}{n}\big)^{q/2}$. So, if correlation matrices with entries of order $(\frac{\log p}{n})^{1/2}$ are excluded from the sparsity class $\G_{\text{w},q}(c_{n,p})$, in a minimax sense there are no estimators which perform better than the trivial estimate by the identity matrix. 
\begin{proposition}\label{pro:ult}
For $q>0$ and absolute constants $\tilde C,C>0$ holds
\begin{align}\label{eqn:ult2}
\underset{\rho\in \G_{\text{w},q}(c_{n,p})}{\sup}\frac{1}{p}\|\Id-\rho\|_F^2&\le C c_{n,p}^{\frac{2}{q}},\\
\underset{\rho\in \G_{\text{w},q}(c_{n,p})}{\sup}\|\Id-\rho\|_2^2&\le \tilde C c_{n,p}^{\frac{2}{q}}.\label{eqn:ult3}
\end{align}
Especially, under the assumptions $(A_1),(A_2)$ and $(A_3^\ast)$ the identity matrix attains the minimax rate for estimating $\rho$. 
\end{proposition}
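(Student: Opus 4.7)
The plan is to reduce both inequalities to row-wise estimates that follow immediately from the weak $\ell_q$ condition. For \eqref{eqn:ult2} I would expand
\[
\tfrac{1}{p}\|\Id-\rho\|_F^2=\tfrac{1}{p}\sum_{i=1}^p\|\rho_i\|_2^2,
\]
and for \eqref{eqn:ult3} invoke $\|\Id-\rho\|_2\le\|\Id-\rho\|_1=\max_j\|\rho_j\|_1$, which is recorded in the preliminaries for symmetric matrices. Both statements thus reduce to controlling $\|\rho_i\|_s$ for $s\in\{1,2\}$ uniformly in $i$.

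For a fixed row $x:=\rho_i\in B_{\text{w},q}(c_{n,p})$, the very definition of the weak $\ell_q$-ball yields $|x_{[k]}|^s\le c_{n,p}^{s/q}\,k^{-s/q}$, so
\[
\|x\|_s^s=\sum_{k=1}^{p-1}|x_{[k]}|^s\le c_{n,p}^{s/q}\sum_{k\ge 1}k^{-s/q}=c_{n,p}^{s/q}\,\zeta(s/q).
\]
The Riemann zeta function is finite because $0<q<1$ forces $s/q>1$ for both $s=1$ and $s=2$; in particular the weak $\ell_q$ condition alone (without any use of the trivial bound $|x_{[k]}|\le 1$) controls the entire row. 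Taking $s=2$ and averaging over $i$ yields \eqref{eqn:ult2}; taking $s=1$, squaring the row bound, and maximising over $j$ yields \eqref{eqn:ult3}.

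For the second claim I would compare these upper bounds with the minimax lower bounds of Theorem \ref{the:minlowboucor}. Assumption $(A_3^\ast)$ forces $c_{n,p}^{2/q}\le v^{2/q}(\log p/n)$, so the minimum in each of \eqref{eqn:minlowsam} and \eqref{eqn:minlowsam2} equals $c_{n,p}^{2/q}$. The Frobenius lower bound then simplifies to $c_{n,p}\cdot(c_{n,p}^{2/q})^{1-q/2}=c_{n,p}^{2/q}$ and the spectral lower bound to $c_{n,p}^{2}\cdot(c_{n,p}^{2/q})^{1-q}=c_{n,p}^{2/q}$. Both match, up to constants, the risks of the identity estimator established above, so the identity attains the minimax rate under $(A_3^\ast)$.

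I do not foresee any real obstacle; the argument is elementary summation. The only conceptual point worth emphasising is that, thanks to $q<1$, the exponent $s/q$ always exceeds $1$, which is precisely why the tail sum $\sum_k k^{-s/q}$ converges and the pointwise bound $|x_{[k]}|\le 1$ is never needed.
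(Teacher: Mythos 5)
Your proposal is correct and follows essentially the same route as the paper: reduce to a row-wise estimate using $\frac{1}{p}\|\Id-\rho\|_F^2=\frac{1}{p}\sum_i\|\rho_i\|_2^2$ and $\|\Id-\rho\|_2\le\|\Id-\rho\|_1$, then sum the weak $\ell_q$ bound $|x_{[k]}|^s\le c_{n,p}^{s/q}k^{-s/q}$ over $k$. The only cosmetic difference is that the paper bounds the resulting sum by $1+\int_1^\infty x^{-s/q}\,dx$ rather than writing it as $\zeta(s/q)$, and the paper leaves the final comparison with the lower bounds of Theorem \ref{the:minlowboucor} implicit, which you correctly fill in by noting that $(A_3^\ast)$ collapses the minimum to $c_{n,p}^{2/q}$ and both rates then reduce to $c_{n,p}^{2/q}$.
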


Henceforth, we only consider classes $\mathcal{G}_{\text{w},q}(c_{n,p})$ of sparse correlation matrices, such that $c_{n,p}\ge m(\frac{\log p}{n})^{q/2}$ for some positive constant $m>0$. In this case the estimation of the correlation matrix can be accomplished by the threshold estimator   $\hat\rho^\ast$ for suitable threshold levels $\alpha(\frac{\log p}{n})^{1/2}$. The following theorem proves that the choice $\alpha>2$ is sufficient to guarantee that the estimator achieves the minimax rate under the squared Frobenius norm, $\alpha>2\sqrt{2}$ is sufficient to achieve the minimax rate under the squared spectral norm. Hence, the proposed estimator provides even for small sample sizes a non-trivial estimate of the correlation matrix, where by ``non-trivial estimate'' we understand that with positive probability the estimator is not the identity matrix.
\begin{theorem}[Minimax upper bound for Kendall's tau correlation matrix estimation]\label{the:minuppbouken}\label{the:minuppboucor}
Under the assumptions $(A_1)-(A_5)$ the threshold estimator $\hat\rho^\ast=\sin[\frac{\pi}{2}\hat\tau^\ast],~\hat\tau^\ast=T_\alpha(\hat\tau),$ based on Kendall's tau sample correlation matrix $\hat\tau$ attains the minimax rate over the set $\G_{\text{w},q}(c_{n,p})$ for a sufficiently large threshold constant $\alpha$, particularly
\begin{align}\label{eqn:minuppbouken}
\underset{\rho\in \G_{\text{w},q}(c_{n,p})}{\sup}\frac{1}{p}\E\|\hat\rho^\ast-\rho\|_F^2\le C_{\alpha}c_{n,p}\left(\frac{\log p}{n}\right)^{1-\frac{q}{2}}~~\text{if }\alpha>2,
\end{align}
 \\[-0.4cm]
\noindent and
 \\[-0.5cm]
\begin{align}\label{eqn:minuppboucor}
\underset{\rho\in \G_{\text{w},q}(c_{n,p})}{\sup}\frac{1}{p}\E\|\hat\rho^\ast-\rho\|_2^2\le \tilde C_{\alpha}c_{n,p}^2\left(\frac{\log p}{n}\right)^{1-q}~~\text{if }\alpha>2\sqrt{2},
\end{align}
where the constants $\tilde C_\alpha, C_{\alpha}>0$ depend on the threshold constant $\alpha$ only.
\end{theorem}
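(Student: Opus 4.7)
The plan is to reduce the problem to a bound on $\hat\tau^*-\tau$ in the appropriate norm and then run a hard-thresholding analysis in which the sharpness of the threshold constants is dictated by Hoeffding's inequality for U-statistics. By Lemma~\ref{lem:boukencor}, $\|\hat\rho^*-\rho\|_F\le\frac{\pi}{2}\|\hat\tau^*-\tau\|_F$, and since $\|\cdot\|_2\le\|\cdot\|_1$ on symmetric matrices, $\|\hat\rho^*-\rho\|_2\le\|\hat\rho^*-\rho\|_1\le\frac{\pi}{2}\|\hat\tau^*-\tau\|_1$. Lemma~\ref{lem:spatra} gives that $\rho\in\G_{\mathrm{w},q}(c_{n,p})$ forces $\tau\in\G_{\mathrm{w},q}(c_{n,p})$, so the sparsity structure is inherited. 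Each off-diagonal $\hat\tau_{ij}$ is a U-statistic of degree two with kernel in $[-1,1]$, so Hoeffding's inequality for U-statistics yields, with $\lambda:=\alpha\sqrt{\log p/n}$,
\[
\P\bigl(|\hat\tau_{ij}-\tau_{ij}|\ge c\lambda\bigr)\le 2\exp\bigl(-\tfrac{1}{2}\lfloor n/2\rfloor c^2\lambda^2\bigr)\le 2\,p^{-c^2\alpha^2/4+o(1)}.
\]

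For the Frobenius bound \eqref{eqn:minuppbouken}, I would perform the standard hard-threshold decomposition: pick a buffer $\gamma\in(2/\alpha,1)$ and split the pairs $(i,j)$ according to whether $|\tau_{ij}|\ge\gamma\lambda$ (``large'') or $|\tau_{ij}|<\gamma\lambda$ (``small''). The weak-$\ell_q$ bound $|\tau_{i,[k]}|\le(c_{n,p}/k)^{1/q}$ caps the number of large entries per row at $c_{n,p}(\gamma\lambda)^{-q}$, each contributing an $O(1/n)$ variance-type term which, after summing and normalizing by $1/p$, is of lower order than the target by a factor $1/\log p$. The truncation error from the small entries sums to $\sum_{k>c_{n,p}(\gamma\lambda)^{-q}}(c_{n,p}/k)^{2/q}\le C_q\,c_{n,p}(\gamma\lambda)^{2-q}$ per row, already at the target rate. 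The misclassification terms, corresponding to events $\{|\hat\tau_{ij}-\tau_{ij}|>(1-\gamma)\lambda\}$, are handled via the tail integral
\[
\E\bigl[(\hat\tau_{ij}-\tau_{ij})^2\ind(|\hat\tau_{ij}-\tau_{ij}|>s)\bigr]\le(2s^2+8/n)\,e^{-ns^2/4},
\]
which, at $s=(1-\gamma)\lambda$, yields after summing over $p^2$ pairs a contribution of order $p^{1-(1-\gamma)^2\alpha^2/4}(\log p/n)$. Whenever $\alpha>2$ one can pick $\gamma>0$ small enough that $(1-\gamma)^2\alpha^2/4>1$, and this term is absorbed into $c_{n,p}(\log p/n)^{1-q/2}$.

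For the spectral bound \eqref{eqn:minuppboucor}, I would work with $\|\hat\tau^*-\tau\|_1$ and condition on the uniform-control event $\mathcal{E}_c:=\{\max_{i\neq j}|\hat\tau_{ij}-\tau_{ij}|\le c\lambda\}$ for some $c\in(0,1)$ with $c^2\alpha^2/4>2$. On $\mathcal{E}_c$, large entries $(|\tau_{ij}|>(1+c)\lambda)$ are correctly kept with per-entry error $\le c\lambda$, small entries $(|\tau_{ij}|\le(1-c)\lambda)$ are correctly zeroed with error $\le(1-c)\lambda$, and the intermediate range contributes at most $(1+c)\lambda$ per entry; combined with the weak-$\ell_q$ sum $\sum_{k>K_0}(c_{n,p}/k)^{1/q}\le C_q\,c_{n,p}\lambda^{1-q}$, these bounds give $\|\hat\tau^*-\tau\|_1\le C_{q,c}\,c_{n,p}\lambda^{1-q}$ deterministically, whose square matches $c_{n,p}^2(\log p/n)^{1-q}$. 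The union bound over the $p(p-1)$ off-diagonal entries forces $\P(\mathcal{E}_c^c)\le 2p^{2-c^2\alpha^2/4}$, so the exponent condition $c^2\alpha^2/4>2$ is needed; taking $c\uparrow 1$ this is exactly $\alpha>2\sqrt{2}$. The off-event contribution is controlled by $\|\hat\tau^*-\tau\|_1\le 2p$ together with assumption $(A_5)$, which turns $p^{4-c^2\alpha^2/4}$ into a polynomially decaying term in $n$ and absorbs it into the rate.

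The main obstacle is to extract the sharp constants $2$ and $2\sqrt{2}$ without slack. This requires, on the Frobenius side, the tail-integral computation above (a crude $\text{const}\cdot\P(\text{bad event})$ bound would waste logarithmic factors and prevent the sharp constant) and, on the spectral side, a careful interplay between the buffer $c$ and the polynomial lower bound $p>n^{\eta_l}$ from $(A_5)$ to dispose of the off-event contribution. The factor $\tfrac{1}{4}$ in the Hoeffding exponent, stemming from the U-statistic order $m=2$ and the kernel range $b-a=2$, is precisely what turns the counting conditions ``exponent $>1$'' (Frobenius) and ``exponent $>2$'' (spectral) into the threshold constants $\sqrt{4}=2$ and $\sqrt{8}=2\sqrt{2}$.
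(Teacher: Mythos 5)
Your Frobenius-norm plan is the standard hard-threshold decomposition and, with some bookkeeping fixed, it gives the claimed rate; it is a recognizable variant of the paper's approach, which instead packages the analysis around the single event $B_{ij}:=\{|\hat\tau^*_{ij}-\tau_{ij}|\le\beta\min(|\tau_{ij}|,\alpha\sqrt{\log p/n})\}$ with a free parameter $\beta$ (Lemma~\ref{lem:lareve}), producing the sharp exponent condition $(\beta-1)^2\alpha^2/(4(\beta+1)^2)>1$ by sending $\beta\to\infty$. Two local slips in your sketch: (i) the buffer must satisfy $(1-\gamma)\alpha>2$, i.e.\ $\gamma\in(0,1-2/\alpha)$, not $\gamma\in(2/\alpha,1)$; you actually say this correctly at the end (``pick $\gamma>0$ small enough''), which contradicts the earlier range. (ii) The ``large entry'' bucket is \emph{not} of lower order by $1/\log p$: entries with $\gamma\lambda\le|\tau_{ij}|\lesssim\lambda$ get thresholded with non-negligible probability, contributing $\tau_{ij}^2\asymp\lambda^2$ rather than $O(1/n)$; the bucket is itself at the target rate $c_{n,p}(\log p/n)^{1-q/2}$. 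Neither of these affects the conclusion, but the accounting as written is incorrect.

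The spectral-norm plan has a genuine gap. You bound the off-event term by $\|\hat\tau^*-\tau\|_1^2\ind_{\mathcal{E}_c^c}\le 4p^2\ind_{\mathcal{E}_c^c}$ and $\P(\mathcal{E}_c^c)\lesssim p^{2-c^2\alpha^2/4}$, obtaining a contribution of order $p^{4-c^2\alpha^2/4}$, and assert that $(A_5)$ turns this into a decaying term. But $(A_5)$ only says $p>n^{\eta_l}$; for any $\alpha\in(2\sqrt{2},4)$ and $c\le1$ the exponent $4-c^2\alpha^2/4$ is strictly positive, so $p^{4-c^2\alpha^2/4}$ \emph{diverges}, while the target $c_{n,p}^2(\log p/n)^{1-q}$ is $O((\log p)^{-2})$ under $(A_3)$. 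A crude $\|\cdot\|_1\le 2p$ bound combined with a union bound cannot reach the constant $2\sqrt{2}$; it only works once the per-entry exponent exceeds $4$, i.e.\ for $\alpha>4$. The paper avoids this by decomposing $\hat\tau^*-\tau$ entrywise into a $B$-part and a $B^c$-part, controlling the $B^c$-part through the \emph{Frobenius} norm (so the expected squared loss factorizes over entries) and then applying H\"older's inequality per entry to fold the indicator $\ind_{B_{ij}^c}$ into the moment; the requirement that the resulting exponent of $p$ be pushed below $-1$ is exactly where $\varepsilon>1$, hence $\alpha>2\sqrt{2}$, enters. Without some device of this type the off-event term cannot be controlled at the sharp constant.
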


Altogether, Proposition \ref{pro:ult} and Theorem \ref{the:minuppbouken} verify that Theorem \ref{the:minlowbouken} provides the minimax rates of estimation for sparse correlation matrices.

As we will see in the next section, inequality \eqref{eqn:minuppbouken} cannot be extended to any threshold constant $\alpha<2$ without using an improved large deviation inequality in comparison to Hoeffding's inequality for U-statistics (\cite{Hoeffding1963}). Before we start to discuss the issue of critical threshold constants, we close the section with a final result. We show that it is irrelevant whether we first apply a threshold operator on Kendall's tau sample correlation matrix and afterwards transform the obtained matrix to an appropriate estimator for the correlation matrix or vice versa where in the latter procedure the threshold constant needs to be adjusted to $\frac{\pi}{2}\alpha$. Heuristically, this seems evident, as asymptotically the ratio between the implied threshold level of $\sin[\frac{\pi}{2}T_{\alpha}(\hat\tau)]$ with respect to entries of $\sin[\frac{\pi}{2}\hat\tau]$ and the threshold level of $T_{\frac{\pi}{2}\alpha}(\sin[\frac{\pi}{2}\hat\tau])$ is one. Nevertheless, $\hat\rho^\ast$ is already for smaller sample sizes a non-trivial estimator for the correlation matrix than $\hat\rho$.
\begin{theorem}\label{the:minuppboucor2}
Under the assumptions $(A_1)-(A_5)$ the threshold estimator $\hat\rho:=T_\alpha(\sin[\frac{\pi}{2}\hat\tau])$ based on Kendall's tau sample correlation matrix $\hat\tau$ attains the minimax rate over the set $\G_{\text{w},q}(c_{n,p})$ for a sufficiently large threshold constant $\alpha$, particularly
\begin{align}\label{eqn:minuppboucor2}
\underset{\rho\in \G_{\text{w},q}(c_{n,p})}{\sup}\frac{1}{p}\E\|\hat\rho-\rho\|_F^2&\le C_{\alpha}c_{n,p}\left(\frac{\log p}{n}\right)^{1-\frac{q}{2}}~~\text{if }\alpha>\pi,
\end{align}
where the factor $C_{\alpha}>0$ depends on the threshold constant $\alpha$.
\end{theorem}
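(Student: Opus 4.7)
The plan is to rewrite $\hat\rho=T_\alpha(\sin[\tfrac{\pi}{2}\hat\tau])$ as a version of the estimator $\hat\rho^\ast$ from Theorem~\ref{the:minuppboucor} with a modified threshold constant, and then invoke that theorem. Since $x\mapsto\sin(\tfrac{\pi}{2}x)$ is strictly increasing on $[-1,1]$, the condition $|\sin(\tfrac{\pi}{2}\hat\tau_{ij})|>\alpha\sqrt{\log p/n}$ is---provided $\alpha\sqrt{\log p/n}\le 1$---equivalent to $|\hat\tau_{ij}|>\beta\sqrt{\log p/n}$ with
\[
\beta := \beta(\alpha,n,p) = \frac{2}{\pi\sqrt{\log p/n}}\,\arcsin\bigl(\alpha\sqrt{\log p/n}\bigr).
\]
Consequently the pointwise identity $\hat\rho=\sin[\tfrac{\pi}{2}T_\beta(\hat\tau)]=\hat\rho^\ast_\beta$ holds.

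From the elementary bounds $x\le\arcsin(x)\le\tfrac{\pi}{2}x$ valid on $[0,1]$ (the right-hand one following from convexity of $\arcsin$), one obtains $\beta\in[\tfrac{2\alpha}{\pi},\alpha]$ uniformly in $n$ and $p$. For $\alpha>\pi$ this interval is strictly contained in $(2,\infty)$, so Theorem~\ref{the:minuppboucor} applied to $\hat\rho^\ast_\beta$ yields the target Frobenius bound with some constant $C_\beta$. The mild technical issue is that $\beta$ itself depends on $n,p$; one verifies from the proof of Theorem~\ref{the:minuppboucor}---essentially a combination of Hoeffding's inequality for the U-statistic $\hat\tau_{ij}$ together with the weak-$\ell_q$ accounting on the rows of $\tau$---that $\beta'\mapsto C_{\beta'}$ is continuous on $(2,\infty)$. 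Then the uniform value $C_\alpha:=\sup_{\beta'\in[2\alpha/\pi,\,\alpha]}C_{\beta'}$ is finite (the supremum being taken over a compact subset of $(2,\infty)$) and serves as the constant in \eqref{eqn:minuppboucor2}.

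The degenerate regime $\alpha\sqrt{\log p/n}>1$, in which the reduction fails because $\hat\rho=\Id$, is handled separately: Proposition~\ref{pro:ult} combined with $|\rho_{ij}|\le 1$ and a direct weak-$\ell_q$ calculation controls $\tfrac{1}{p}\|\Id-\rho\|_F^2$ by a constant multiple of $c_{n,p}$, while in this regime $(\log p/n)^{1-q/2}\ge\alpha^{q-2}$, so the right-hand side of \eqref{eqn:minuppboucor2} is already of order $c_{n,p}$ and enlarging $C_\alpha$ absorbs the case. The single non-routine step in the whole argument is the uniform-in-$\beta$ control of $C_\beta$; everything else is elementary trigonometry and bookkeeping of constants.
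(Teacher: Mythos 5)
Your proof is correct and follows essentially the same route as the paper: rewrite $T_\alpha(\sin[\tfrac{\pi}{2}\hat\tau])$ as $\sin[\tfrac{\pi}{2}T_{\beta}(\hat\tau)]$ with an implied threshold constant $\beta=\tfrac{2}{\pi\sqrt{\log p/n}}\arcsin(\alpha\sqrt{\log p/n})\in[\tfrac{2\alpha}{\pi},\alpha]\subset(2,\infty)$, then take the supremum of the constants $C_{\tilde\alpha}$ from Theorem~\ref{the:minuppboucor} over that compact interval. The only cosmetic difference is that the paper absorbs the degenerate regime $\alpha\sqrt{\log p/n}\ge 1$ automatically by inserting $\wedge\,1$ before applying the mean value theorem (so $\lambda=\tfrac{\pi}{2}$ there and the identity $\hat\rho=\sin[\tfrac{\pi}{2}T_\alpha(\hat\tau)]=\Id$ still holds), whereas you treat that case explicitly via Proposition~\ref{pro:ult}; both dispositions are sound.
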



\section{Critical threshold levels for minimax estimation}\label{sec:cri}
In this section we discuss the critical threshold levels for which the threshold estimators just attain the minimax rate. In opposite to the previous section we need throughout that the observations are Gaussian random vectors and the dimension $p$ does not grow to fast in $n$. Precisely, $p$ is only allowed to grow polynomially in $n$. Otherwise we cannot apply the large deviation inequality $(\ref{eqn:conweacor})$ from section 5. As mentioned earlier the arguments in the proof of Theorem \ref{the:minuppboucor} are optimized up to the concentration inequality for the entries of Kendall's tau sample correlation matrix. This is supposed to mean that if inequality $(\ref{eqn:conweacor})$ would hold for any entry $\hat\tau_{ij}$ then replacing Hoeffding's inequality by $(\ref{eqn:conweacor})$ at some places of the proof already provides the critical threshold level. Actually, we do not need inequality $(\ref{eqn:conweacor})$ for all entries $\hat\tau_{ij}$. It is sufficient to stay with Hoeffding's inequality for strongly correlated components. We say that two real-valued random variables $(Y,Z)$ are strongly correlated (with respect to $p$ and $n$) if $\rho(Y,Z)\ge\frac{5\pi}{2}(\frac{\log p}{n})^{1/2}$. Otherwise $(Y,Z)$ are called weakly correlated. The constant $\frac{5\pi}{2}$ is chosen arbitrarily and could be replaced by any sufficiently large value. Our choice guarantees that for any threshold level $\alpha(\frac{\log p}{n})^{1/2}$ with $\frac{1}{2^{1/2}}\alpha>\frac{2}{3}$ the entries $\hat\tau_{ij}$ corresponding to strongly strongly correlated components are not rejected by the threshold operator with probability $1-Cp^{-\gamma}$, where $\gamma>0$ is suitably large. Within the weakly correlated components we have a transition to entries $\hat\tau_{ij}$, which are very likely to be rejected. Therefore it is sufficient to use only in the latter case more precise large deviation results instead of Hoeffding's inequality.

For the proof that the threshold estimator $\hat\rho^\ast$ with threshold constant $\alpha<\frac{2\sqrt{2}}{3}$ does not achieve the minimax rate over some classes $\G_{\text{w},q}(c_{n,p})$ we just have to study $ p^{-1}\E\|\hat\rho^\ast-\Id\|_F^2$ by the lower bound in inequality $(\ref{eqn:conweacor})$, where the identity matrix is the underlying correlation matrix. Note that obviously $\Id\in\G_{\text{w},q}(c_{n,p})$ for any sparsity class $\G_{\text{w},q}(c_{n,p})$.

Similar to the previous section we first present the results for the estimator $\hat\rho^\ast$. Afterwards analogous statements for $\hat\rho$ are formulated.

\begin{theorem}\label{the:optuppbou}
Under the assumptions $(A_1^\ast)$ and $(A_2)-(A_6)$ let $\alpha\neq \frac{2\sqrt{2}}{3}$, then $\hat\rho^\ast$ is minimax rate optimal under the squared Frobenius norm over all sets $\G_{\text{w},q}(c_{n,p})$ iff $\alpha>\frac{2\sqrt{2}}{3}$. Hence, for $\alpha>\frac{2\sqrt{2}}{3}$ and an arbitrary set  $\G_{\text{w},q}(c_{n,p})$ we have
\begin{align}\label{eqn:optuppboucor}
\underset{\rho\in \G_{\text{w},q}(c_{n,p})}{\sup}\frac{1}{p}\E\|\hat\rho^\ast-\rho\|_F^2\le  C_{\alpha,\eta_u}c_{n,p}\left(\frac{\log p}{n}\right)^{1-\frac{q}{2}},
\end{align}
where the constants $C_{\alpha,\eta_u}>0$ depends on $\alpha$ and $\eta_u$.

Moreover, for $\alpha<\frac{2}{3}$ there is no permissible set $\G_{\text{w},q}(c_{n,p})$ such that $\hat\rho^\ast$ attains the minimax rate over $\G_{\text{w},q}(c_{n,p})$.
\end{theorem}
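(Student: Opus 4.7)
The plan is to follow the proof strategy of Theorem \ref{the:minuppbouken} on the upper-bound side, but to replace Hoeffding's inequality for U-statistics by the sharper tail estimate $(\ref{eqn:conweacor})$ from Section~5 whenever two components are weakly correlated. On the lower-bound side, the worst case already arises at $\rho=\mathrm{Id}$, where the same sharp estimate, now used in its lower direction, forces a spurious contribution from each off-diagonal $\hat\tau_{ij}$ that is too large to match the minimax rate once $\alpha$ drops below the critical level.

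\textbf{Upper bound ($\alpha>\tfrac{2\sqrt 2}{3}$).} I would first split the off-diagonal index pairs into strongly correlated pairs $\mathcal S:=\{(i,j):|\rho_{ij}|\geq \tfrac{5\pi}{2}(\log p/n)^{1/2}\}$ and weakly correlated pairs $\mathcal W$, and bound the Frobenius risk separately. For $(i,j)\in\mathcal S$ the identity $|\tau_{ij}|=\tfrac{2}{\pi}|\arcsin\rho_{ij}|\geq \tfrac{2}{\pi}|\rho_{ij}|\geq 5(\log p/n)^{1/2}$ places $|\tau_{ij}|$ well above the threshold, so Hoeffding's inequality for U-statistics yields $\P(|\hat\tau_{ij}|\le \alpha(\log p/n)^{1/2})\le p^{-\gamma}$ for some sufficiently large $\gamma$; coupled with the weak-$\ell_q$-ball sparsity constraint, exactly as in Theorem \ref{the:minuppbouken}, this produces a strong-pair contribution of order $c_{n,p}(\log p/n)^{1-q/2}$. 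For $(i,j)\in\mathcal W$ the upper tail in $(\ref{eqn:conweacor})$ gives $\P(|\hat\tau_{ij}|>\alpha(\log p/n)^{1/2})\lesssim p^{-9\alpha^2/8}$, and integration by parts together with $|\sin(\tfrac{\pi}{2}x)|\le\tfrac{\pi}{2}|x|$ yields
\begin{equation*}
\E|\hat\rho^\ast_{ij}|^2 \;\lesssim\; \E\bigl[\hat\tau_{ij}^{\,2}\,\ind(|\hat\tau_{ij}|>\alpha(\log p/n)^{1/2})\bigr] \;\lesssim\; \tfrac{\log p}{n}\,p^{-9\alpha^2/8}.
\end{equation*}
Summing over $|\mathcal W|\le p(p-1)$ and dividing by $p$ gives a weak-pair contribution of order $p^{\,1-9\alpha^2/8}\,\tfrac{\log p}{n}=o(\log p/n)$ as soon as $\alpha>\tfrac{2\sqrt 2}{3}$, which is absorbed by the strong-pair rate since $(A_4)$ forces $c_{n,p}(\log p/n)^{1-q/2}\ge m\log p/n$.

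\textbf{Lower bound.} I take $\rho=\mathrm{Id}\in\G_{\mathrm w,q}(c_{n,p})$, so under $(A_1^\ast)$ the pairs of off-diagonal components are independent Gaussian, and the \emph{lower} tail of $(\ref{eqn:conweacor})$ yields $\P(|\hat\tau_{ij}|>\alpha(\log p/n)^{1/2})\gtrsim p^{-9\alpha^2/8}$. On this event $|\sin(\tfrac{\pi}{2}\hat\tau_{ij})|\ge \sin\bigl(\tfrac{\pi\alpha}{2}(\log p/n)^{1/2}\bigr)\gtrsim \alpha(\log p/n)^{1/2}$, so
\begin{equation*}
\tfrac{1}{p}\E\|\hat\rho^\ast-\mathrm{Id}\|_F^2 \;\gtrsim\; \alpha^2\,p^{\,1-9\alpha^2/8}\,\tfrac{\log p}{n}.
\end{equation*}
For $\alpha\in(\tfrac{2}{3},\tfrac{2\sqrt 2}{3})$ this already exceeds any constant times the minimal-radius minimax rate $m\log p/n$ (class $c_{n,p}\asymp(\log p/n)^{q/2}$), completing the iff part. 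For $\alpha<\tfrac{2}{3}$ one has $1-9\alpha^2/8>\tfrac{1}{2}$, so $(A_5)$ gives $p^{\,1-9\alpha^2/8}>n^{\eta_l/2}$; comparing this against the maximal admissible $c_{n,p}\le Mn^{(1-q)/2}(\log p)^{-(3-q)/2}$ from $(A_3)$ reduces the required inequality to $n^{\eta_l/2}\gg n^{1/2}(\log p)^{-3/2}$, which holds because $\eta_l>1$. This shows that no permissible class $\G_{\mathrm w,q}(c_{n,p})$ can accommodate $\hat\rho^\ast$ as a minimax estimator.

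\textbf{Main obstacle.} Almost all of the technical work is concentrated in $(\ref{eqn:conweacor})$, which must deliver matching two-sided Gaussian-type tails for $\hat\tau_{ij}$ at depths $t\asymp(\log p/n)^{1/2}$, and this is exactly where the polynomial-growth assumption $(A_6)$ is consumed. Inside the present proof the delicate point is that the strong-pair cutoff $\tfrac{5\pi}{2}(\log p/n)^{1/2}$ must be chosen large enough for Hoeffding to suffice on $\mathcal S$ yet small enough that the weak-$\ell_q$ summation on $\mathcal W$ still produces the correct rate; this forces checking that both branches of $(\ref{eqn:conweacor})$ carry the precise constant $9/8$ in the exponent, as it is this constant that converts the threshold constant $\alpha$ into the critical value $\tfrac{2\sqrt 2}{3}$ via the relation $9\alpha^2/8=1$.
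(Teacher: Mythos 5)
Your overall strategy --- replacing Hoeffding's inequality by the sharp two-sided tail estimate of Corollary \ref{lem:conweacor} on weakly correlated pairs while keeping Hoeffding for strongly correlated pairs, and obtaining the lower bound by evaluating the risk at $\rho=\mathrm{Id}$ --- is exactly the paper's strategy, and your lower-bound computation, the factor-of-$\sqrt{\log p}$ discrepancy notwithstanding, is correct and matches the argument in the paper. The reasoning for $\alpha<\tfrac{2}{3}$ via $(A_3)$ and $(A_5)$ is also the paper's reasoning.

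There is, however, a genuine gap in the weak-pair upper bound. You bound $\P(|\hat\tau_{ij}|>\alpha\sqrt{\log p/n})\lesssim p^{-9\alpha^2/8}$ \emph{for all} $(i,j)\in\mathcal W$ and then bound the weak-pair contribution by $\E[\hat\tau_{ij}^2\,\ind(|\hat\tau_{ij}|>\alpha\sqrt{\log p/n})]$. Both steps fail once $\tau_{ij}\neq 0$. The membership criterion for $\mathcal W$ is $|\rho_{ij}|<\tfrac{5\pi}{2}\sqrt{\log p/n}$, which permits $|\tau_{ij}|$ anywhere up to roughly $5\sqrt{\log p/n}$, well above the threshold $\alpha\sqrt{\log p/n}$ when $\alpha\le 2$; for such pairs $\P(|\hat\tau_{ij}|>\alpha\sqrt{\log p/n})$ is close to $1$, not $p^{-9\alpha^2/8}$, since Corollary \ref{lem:conweacor} controls deviations of $\hat\tau_{ij}$ \emph{around} $\tau_{ij}$, not around $0$. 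Moreover the relevant risk term is $\E(\hat\rho^\ast_{ij}-\rho_{ij})^2$, which for weak pairs is dominated by the \emph{bias} $\rho_{ij}^2\,\P(|\hat\tau_{ij}|\le\alpha\sqrt{\log p/n})\approx\rho_{ij}^2$ --- a quantity your decomposition never accounts for and which is in fact what produces the minimax rate $c_{n,p}(\log p/n)^{1-q/2}$. The paper sidesteps both problems by working with the event $B_{ij}=\{|\hat\tau^\ast_{ij}-\tau_{ij}|\le\beta\min(|\tau_{ij}|,\alpha\sqrt{\log p/n})\}$ rather than $\{|\hat\tau_{ij}|>\alpha\sqrt{\log p/n}\}$: the sharp tail from Corollary \ref{lem:conweacor} is inserted into the case analysis of Lemma \ref{lem:lareve} to prove Lemma \ref{lem:concor}, which gives $\P(B_{ij}^c)\le C p^{-(1+\varepsilon)}$ once $\beta>\tfrac{3\alpha+2\sqrt 2}{3\alpha-2\sqrt 2}$, and then the unchanged $I_1+I_2$ decomposition of Theorem \ref{the:minuppbouken} carries through. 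Your proof as written cannot produce \eqref{eqn:optuppboucor}; you would need to reintroduce the $B_{ij}$-based decomposition (or an equivalent device) to capture both the bias of thresholded weak pairs and the non-small detection probability of weak pairs with $|\tau_{ij}|$ near the threshold.
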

\begin{theorem}\label{the:optuppbou2}
Under the assumptions $(A_1^\ast)$ and $(A_2)-(A_6)$ let $\alpha\neq \frac{\sqrt{2}\pi}{3}$, then $\hat\rho$ is minimax rate optimal over all sets $\G_{\text{w},q}(c_{n,p})$ iff $\alpha>\frac{\sqrt{2}\pi}{3}$. Hence, for $\alpha>\frac{\sqrt{2}\pi}{3}$ and an arbitrary set  $\G_{\text{w},q}(c_{n,p})$ we have
\begin{align}\label{eqn:optuppbou2}
\underset{\rho\in \G_{\text{w},q}(c_{n,p})}{\sup}\frac{1}{p}\E\|\hat\rho-\rho\|_F^2\le C_{\alpha,\eta_u}c_{n,p}\left(\frac{\log p}{n}\right)^{1-\frac{q}{2}},
\end{align}
where the constant $C_{\alpha,\eta_u}>0$ depends on $\alpha$ and $\eta_u$.

Moreover, for $\alpha<\frac{\pi}{3}$ there is no permissible set $\G_{\text{w},q}(c_{n,p})$ such that $\hat\rho$ attains the minimax rate over $\G_{\text{w},q}(c_{n,p})$.
\end{theorem}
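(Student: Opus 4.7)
The plan is to reduce Theorem \ref{the:optuppbou2} to Theorem \ref{the:optuppbou} via a deterministic change-of-threshold argument. Since $\sin$ is strictly increasing on $[-\pi/2,\pi/2]$ and $|\hat\tau_{ij}|\le 1$, the event $\bigl\{|\sin(\tfrac{\pi}{2}\hat\tau_{ij})|>\alpha\sqrt{\log p/n}\bigr\}$ coincides with $\bigl\{|\hat\tau_{ij}|>\alpha'_n\sqrt{\log p/n}\bigr\}$ for the deterministic constant
\[\alpha'_n:=\frac{2}{\pi\sqrt{\log p/n}}\,\arcsin\!\left(\alpha\sqrt{\tfrac{\log p}{n}}\right),\]
which is well-defined once $n$ is large enough that $\alpha\sqrt{\log p/n}\le 1$; by $(A_6)$ this holds for all but finitely many $n$. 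On the off-diagonal, both $\hat\rho$ and $\hat\rho^\ast_{\alpha'_n}$ then take the value $\sin(\tfrac{\pi}{2}\hat\tau_{ij})$ on the same event and zero otherwise, while on the diagonal both equal $1$. Hence, as a pointwise identity,
\[\hat\rho=\sin\!\left[\tfrac{\pi}{2}\,T_{\alpha'_n}(\hat\tau)\right]=\hat\rho^\ast_{\alpha'_n}.\]
Using $\arcsin(x)=x+x^3/6+O(x^5)$ as $x\to 0$, one obtains $\alpha'_n=\tfrac{2\alpha}{\pi}\bigl(1+O(\log p/n)\bigr)$, with $\alpha'_n\downarrow 2\alpha/\pi$ as $n\to\infty$.

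For the upper bound, suppose $\alpha>\sqrt{2}\pi/3$, so that $2\alpha/\pi>2\sqrt{2}/3$. Choose $\epsilon>0$ with $2\alpha/\pi-\epsilon>2\sqrt{2}/3$; then for all $n\ge n_0(\alpha,\eta_u)$ we have $\alpha'_n\in[2\sqrt{2}/3+\epsilon',\,\alpha'']$ for some $\epsilon'>0$ and some finite $\alpha''$. Applying Theorem \ref{the:optuppbou} to $\hat\rho^\ast_{\alpha'_n}=\hat\rho$ then gives the desired bound with a constant $C_{\alpha'_n,\eta_u}$; to conclude, one needs to replace this by a single $C_{\alpha,\eta_u}<\infty$. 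This comes down to verifying that the constant in Theorem \ref{the:optuppbou} may be chosen uniformly over compact subintervals of $(2\sqrt{2}/3,\infty)$. Inspection of its proof shows that the $\alpha$-dependence enters only through quantities arising from the sharp large-deviation inequality of Section 5, all of which are continuous and finite on such intervals. The finitely many small $n$ are absorbed into the constant.

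For the non-attainment part, suppose $\alpha<\pi/3$, so that $2\alpha/\pi<2/3$. Pick $\epsilon>0$ with $2\alpha/\pi+\epsilon<2/3$; then $\alpha'_n<2/3-\epsilon'$ for all $n$ large enough. By the corresponding part of Theorem \ref{the:optuppbou}, for $\rho=\Id\in\G_{\text{w},q}(c_{n,p})$ (which lies in every permissible class under $(A_2)$--$(A_6)$) the loss $\frac{1}{p}\E\|\hat\rho^\ast_{\alpha'_n}-\Id\|_F^2$ fails to match the minimax rate $c_{n,p}(\log p/n)^{1-q/2}$ as $n\to\infty$. Since $\hat\rho=\hat\rho^\ast_{\alpha'_n}$ pointwise, the same conclusion transfers verbatim to $\hat\rho$, proving non-attainment over every permissible $\G_{\text{w},q}(c_{n,p})$.

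The main obstacle is the fact that the reduction produces a threshold constant $\alpha'_n$ that drifts (slowly) with $n$, whereas Theorem \ref{the:optuppbou} is stated for fixed constants. Both directions therefore require that the quantitative statements supporting Theorem \ref{the:optuppbou} hold uniformly as the threshold varies over a compact interval that is bounded away from the critical values $2\sqrt{2}/3$ and $2/3$ respectively. This uniformity is not a new phenomenon; it is built into the sharp large-deviation inequality of Section 5 and the entry-wise risk estimates derived from it, so the work to be done is bookkeeping rather than the introduction of any new probabilistic tool.
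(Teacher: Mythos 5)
Your proposal matches the paper's argument: both reduce Theorem \ref{the:optuppbou2} to Theorem \ref{the:optuppbou} via the pointwise identity $\hat\rho=\hat\rho^\ast_{\alpha'}$ for a deterministic rescaled threshold constant $\alpha'\in[\tfrac{2}{\pi}\alpha,\alpha]$ that converges to $\tfrac{2}{\pi}\alpha$ as $n\to\infty$, and then appeal to uniformity of the constants in Theorem \ref{the:optuppbou} over compact threshold intervals. One small omission relative to the stated iff: you do not address the ``only if'' direction for $\pi/3\le\alpha<\sqrt{2}\pi/3$ (existence of \emph{some} permissible class over which $\hat\rho$ fails to be rate optimal), which follows by the identical reduction applied to the corresponding part of Theorem \ref{the:optuppbou}; the paper in fact treats this case explicitly and then notes that the $\alpha<\pi/3$ case follows analogously.
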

So far we are only able to establish the critical threshold constant for $\hat\rho^\ast$ under the squared Frobenius norm. Under the squared spectral norm the  following slightly weaker result holds.
\begin{theorem}\label{optspe}
Let the assumptions $(A_1^\ast)$ and $(A_2)-(A_6)$ hold. Then, $\hat\rho^\ast$ is minimax rate optimal under the squared spectral norm over all sets $\G_{\text{w},q}(c_{n,p})$ if $\alpha>4/3$.  For $\alpha<\frac{2\sqrt{2}}{3}$ there exists a permissible class  $\G_{\text{w},q}(c_{n,p})$ such that $\hat\rho^\ast$ does not attain the minimax rate.
\end{theorem}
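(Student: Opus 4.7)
The plan splits into two independent parts corresponding to the two assertions of the theorem. For both I expect to lean heavily on the sharp large deviation inequality $(\ref{eqn:conweacor})$ and on assumption $(A_6)$ to justify the normal approximation in the regime of interest.

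For the upper bound when $\alpha>4/3$, I would follow the template of the proof of Theorem \ref{the:minuppboucor} (specifically the spectral argument that produced the constant $2\sqrt{2}$ via Hoeffding) and upgrade Hoeffding's inequality to the sharp expansion $(\ref{eqn:conweacor})$ wherever it acts on weakly correlated pairs. Using $\|\hat\rho^{\ast}-\rho\|_{2}\le \|\hat\rho^{\ast}-\rho\|_{1}$ for symmetric matrices, Lemma \ref{lem:boukencor}, and Lemma \ref{lem:spatra}, matters reduce to bounding $\E\|\hat\tau^{\ast}-\tau\|_{1}^{2}$. For each row $i$ I would split the indices $j\ne i$ according to whether the pair is strongly correlated ($|\rho_{ij}|\ge \frac{5\pi}{2}(\log p/n)^{1/2}$) or weakly correlated. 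On the strongly correlated indices Hoeffding's inequality still suffices: these entries survive the threshold with probability $1-Cp^{-\gamma}$ and contribute $O((\log p/n)^{1/2})$ per entry, while the weak-$\ell_{q}$ condition bounds their cardinality per row by $O\bigl(c_{n,p}(\log p/n)^{-q/2}\bigr)$. On the weakly correlated indices $(\ref{eqn:conweacor})$ gives $\P(|\hat\tau_{ij}|>\alpha(\log p/n)^{1/2})\le p^{-9\alpha^{2}/8+o(1)}$, and a union bound over the $p^{2}$ pairs (the single extra factor of $p$ that distinguishes the spectral from the Frobenius problem) demands $9\alpha^{2}/8>2$, i.e., $\alpha>4/3$; once this holds the spurious survivors are negligible. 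Combining the two contributions yields $(\ref{eqn:optuppboucor})$ with the spectral norm in place of Frobenius.

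For the lower bound when $\alpha<2\sqrt{2}/3$, I would take $\rho=\Id$, which lies in every admissible $\G_{\mathrm{w},q}(c_{n,p})$. Off the diagonal $\hat\rho^{\ast}_{ij}=\sin(\pi\hat\tau_{ij}/2)\ind(|\hat\tau_{ij}|>\alpha(\log p/n)^{1/2})$, so the matrix $\hat\rho^{\ast}-\Id$ has zero diagonal. The standard bound $\|A\|_{2}^{2}\ge \max_{i}\sum_{j}A_{ij}^{2}$ combined with Jensen's inequality and the permutation symmetry under $\rho=\Id$ and normality gives
\[
\E\|\hat\rho^{\ast}-\Id\|_{2}^{2}\ \ge\ \max_{i}\E\sum_{j\ne i}(\hat\rho^{\ast}_{ij})^{2}\ \ge\ (p-1)\,\E(\hat\rho^{\ast}_{12})^{2}.
\]
On the event $\{|\hat\tau_{12}|>\alpha(\log p/n)^{1/2}\}$ one has $(\hat\rho^{\ast}_{12})^{2}\ge \sin^{2}(\pi\alpha (\log p/n)^{1/2}/2)\ge (\pi\alpha/2)^{2}(1-o(1))(\log p/n)$, and the lower half of $(\ref{eqn:conweacor})$, applied to the null pair $\rho_{12}=0$, yields $\P(|\hat\tau_{12}|>\alpha(\log p/n)^{1/2})\ge p^{-9\alpha^{2}/8-o(1)}$. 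Multiplying out,
\[
\E\|\hat\rho^{\ast}-\Id\|_{2}^{2}\ \gtrsim\ p^{1-9\alpha^{2}/8-o(1)}\,\frac{\log p}{n},
\]
which for $\alpha<2\sqrt{2}/3$ (equivalently $1-9\alpha^{2}/8>0$) grows polynomially in $p$, dominating the minimax rate $c_{n,p}^{2}(\log p/n)^{1-q}\le C\log p/n$ obtained by choosing $c_{n,p}$ at the lower edge $(\log p/n)^{q/2}$ permitted by $(A_{4})$. Hence on this class $\hat\rho^{\ast}$ fails to be rate optimal.

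The main obstacle is, in both directions, the careful application of $(\ref{eqn:conweacor})$: the expansion must be valid uniformly over all weakly correlated pairs and for the specific tail scale $\alpha(\log p/n)^{1/2}$, which puts us in the Cramér-type regime where the Gaussian approximation is sensitive to the exact variance $4/(9n)$ of $\hat\tau_{ij}$ under $\rho_{ij}=0$ rather than the crude bound Hoeffding provides. Assumption $(A_{6})$ is essential here, since it confines $\log p$ to $O(\log n)$ and places the deviation $\alpha(\log p/n)^{1/2}$ well within the range in which the asymptotic expansion of Clark is valid. A secondary, unavoidable difficulty is the gap between the upper critical $4/3$ and the lower critical $2\sqrt{2}/3$: the spectral upper bound pays an extra factor of $p$ from the row-wise union bound, whereas a single row already suffices to drive the lower bound, and these two $p$-powers translate directly into the factor $\sqrt{2}$ separating the constants $4/3$ and $2\sqrt{2}/3$; closing it would require a more refined control of the column-sum process than the union-bound approach I have in mind.
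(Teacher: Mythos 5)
Your proposal is correct and follows essentially the same route as the paper. For the upper bound, the paper simply notes that the proof of \eqref{eqn:minuppboucor} carries over once Lemma~\ref{lem:lareve} is replaced by Lemma~\ref{lem:concor}: the latter guarantees $\varepsilon>1$ (i.e.\ bad-event probability $\lesssim p^{-2}$) precisely when $\alpha>4/3$, which is your $\tfrac{9}{8}\alpha^2>2$ calculation in disguise. Your explicit strong/weak split is a slightly different organization from the paper's $B$/$B^c$ split (where the event $B$ of Lemma~\ref{lem:concor} controls both the $\ell_1$ contribution of retained entries and, via H\"older on $B^c$, the Frobenius contribution of spurious ones); your opening line ``matters reduce to bounding $\E\|\hat\tau^\ast-\tau\|_1^2$'' overstates the reduction a bit, since the paper in fact bounds $\E\|\hat\tau^{\ast,B}-\tau^B\|_1^2+\E\|\hat\tau^{\ast,B^c}-\tau^{B^c}\|_F^2$ and it is the second piece that generates the $\alpha>4/3$ constraint, but the heuristic you then execute is the same accounting. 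For the lower bound, the paper uses the elementary inequality $\|A\|_2^2\ge p^{-1}\|A\|_F^2$ together with the Frobenius lower bound already established in the proof of Theorem~\ref{the:optuppbou}, whereas you bound the spectral norm below by a single row's squared $\ell_2$-norm; under $\rho=\Id$ these two lower bounds coincide exactly by exchangeability, so this is only a cosmetic difference. Your closing remark on the gap between $4/3$ and $2\sqrt{2}/3$ (the extra $p$-factor from controlling all rows simultaneously versus the single row that drives the lower bound) matches the paper's discussion in Section~\ref{sec:dis}.
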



\section{On Kendall's tau sample correlation for normal distributions with weak correlation}\label{sec:ken}
In this section we discuss the properties of the tails of Kendall's tau sample correlation based on a sample $(Y_1,Z_1),...,(Y_n,Z_n)\sim \mathcal{N}_2(\mu,\Sigma)$ for $\Sigma=\Id$ and small perturbations of the identity. Specifically, we need preferably sharp upper and lower bounds on its tails.
The essential argument for our investigation is the natural linkage between Kendall's tau sample correlation and the number of inversions in a random permutation. So we can apply ``an asymptotic expansion for the number of permutations with a certain number of inversions'' developed by \cite{Clark2000}.


\subsection{Kendall's tau sample correlation for the standard normal distribution}
Before studying the tails of Kendall's tau sample correlation $\hat\tau(Y_{1:n},Z_{1:n})$ based on a sample $(Y_1,Z_1),...,(Y_n,Z_n)\sim \mathcal{N}_2(\mu,\Id)$, we first prove that  $\hat\tau(Y_{1:n},Z_{1:n})$ has - after centering and rescaling -  the same distribution as the the number of inversions in a random permutation on $\{1,...,n\}$. This result is probably known for a long time but to the best of the author's knowledge in no work mentioned explicitly. Particularly, so far statisticians have not taken advantage of any developments on inversions in random permutations in this context.

The number of inversions in a permutation is an old and well-studied object. We say that a permutation $\pi$ on $\{1,...,n\}$ has an inversion at $(i,j)$, $1\le i<j\le n$, iff $\pi(j)>\pi(i)$. This concept was originally introduced by \cite{Cramer1750} in the context of the Leibniz formula for the determinant of quadratic matrices. Denote by $I_n(k)$ the number of permutations on $\{1,...,n\}$ with exactly $k$ inversions.  The generating function $G$ for the numbers $I_n(k)$ is given by $G(z)=\prod_{l=1}^{n-1}\left(1+z+...+z^l\right)$ as already known at least since \cite{Muir1900}. Note that \cite{Kendall1938} studied the generating function of $\hat\tau(Y_{1:n},Z_{1:n})$ independently on prior works on inversions in permutations and thereby derived a central limit theorem for $\hat\tau(Y_{1:n},Z_{1:n})$ when $n$ tends to infinity. However such a result is not strong enough for our purposes. We actually need a Gaussian approximation for the tails of order $(\frac{\log p}{n})^{1/2}$. This will be concluded by the work of \cite{Clark2000}, who gives an asymptotic expansion for $I_n(k)$, where $k=\frac{n(n-1)}{4}\pm l$ and $l$ is allowed to grow moderately with $n$. Therefore, at this point we need that $p$ is not increasing faster than polynomially in $n$. In other words, $\frac{\log p}{\log n}$ is bounded above by some absolute constant.

Certainly, one could show the connection between Kendall's tau correlation and the number of inversion in random permutations by their generating functions. We prefer a direct proof which is more intuitive.
\begin{proposition}\label{pro:keninv}
Let $I_-$ be the number of inversions in a random permutation on $\{1,...,n\}$, $n\ge2$, and $\hat\tau(Y_{1:n},Z_{1:n})$ be Kendall's sample correlation based on $(Y_1,Z_1),...,(Y_n,Z_n)\widesim{i.i.d.}\mathcal{N}_2(\mu,\Id)$. Then, it holds 
\begin{align*}
\hat\tau(Y_{1:n},Z_{1:n})\overset{\mathcal D}{=} 1-\frac{4}{n(n-1)}I_-.
\end{align*}
\end{proposition}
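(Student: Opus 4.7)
The plan is to reduce the statement to a pure combinatorial identity on ranks, since $\hat\tau(Y_{1:n},Z_{1:n})$ depends on the data only through pairwise sign comparisons. Under $\mathcal{N}_2(\mu,\Id)$ the marginals are continuous, so almost surely the $Y_i$ are mutually distinct and the $Z_i$ are mutually distinct. Let $R_i$ denote the rank of $Y_i$ among $Y_1,\ldots,Y_n$ and $S_i$ the rank of $Z_i$ among $Z_1,\ldots,Z_n$. Then $\sign(Y_k-Y_l)=\sign(R_k-R_l)$ and $\sign(Z_k-Z_l)=\sign(S_k-S_l)$, so $\hat\tau$ is a function of $(R,S)$ only. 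By exchangeability and continuity, $R$ is uniform on the symmetric group $\mathfrak{S}_n$, and likewise $S$; moreover, since $\Sigma=\Id$ the vectors $(Y_1,\ldots,Y_n)$ and $(Z_1,\ldots,Z_n)$ are independent, hence so are $R$ and $S$.

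Next I would relabel the observations in the order dictated by $Y$: after this reordering, the new $Y$-ranks are $1,2,\ldots,n$, while the new $Z$-ranks form the permutation $\pi:=S\circ R^{-1}$. Since $R,S$ are independent and uniformly distributed on $\mathfrak{S}_n$, by left-invariance of the uniform law under group multiplication $\pi$ is uniformly distributed on $\mathfrak{S}_n$. (This is the one step that deserves a careful line: conditional on $R=r$, $S\circ r^{-1}$ has the same distribution as $S$, namely uniform, so $\pi$ is unconditionally uniform.)

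I would then rewrite $\hat\tau$ in terms of concordant and discordant unordered pairs. Each unordered pair $\{k,l\}$ contributes equally from both orderings $(k,l)$ and $(l,k)$ (both signs flip, so the product is preserved), and contributes $+1$ if concordant, $-1$ if discordant. Hence
\[
\hat\tau(Y_{1:n},Z_{1:n})=\frac{2(C-D)}{n(n-1)}=1-\frac{4D}{n(n-1)},
\]
where $C+D=\binom{n}{2}$ and $D$ is the number of discordant unordered pairs. In the $Y$-sorted ordering, a pair of indices $i<j$ is discordant precisely when $\pi(i)>\pi(j)$, i.e.\ exactly when $(i,j)$ is an inversion of $\pi$. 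Therefore $D$ coincides with the inversion count $I_-(\pi)$, which by the previous paragraph has the same law as $I_-$, the number of inversions of a uniformly random permutation on $\{1,\ldots,n\}$.

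There is no real obstacle here beyond carefully packaging the rank reduction and the left-invariance argument for $\pi$; the identification of discordant pairs with inversions is then just the standard bijective description of inversions. Neither Gaussianity nor the specific value of $\mu$ plays any role beyond guaranteeing continuous marginals and independence of the two coordinates, which is why the distributional equality is exact (not asymptotic) for every $n\ge 2$.
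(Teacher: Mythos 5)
Your proof is correct and uses essentially the same idea as the paper: reduce $\hat\tau$ to a statistic of the rank permutations, exploit independence of the two coordinates (which here follows from $\Sigma=\Id$) to produce a uniformly distributed random permutation, and identify the discordant pairs with inversions. The paper carries this out by a two-stage relabeling --- first sorting by $Z$ so that the $Z$-signs drop out a.s., then remapping by the $Y$-order statistics to read off inversions --- whereas you package the same thing more symmetrically by introducing both rank vectors $R,S$, setting $\pi = S\circ R^{-1}$, and invoking left-invariance of the uniform measure on $\mathfrak{S}_n$; this is arguably a cleaner formalization but buys no additional generality or strength.
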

\begin{proof}
Recall that by definition
\begin{align*}
\hat\tau(Y_{1:n},Z_{1:n})
&=\frac{1}{n(n-1)}\sum_{i,j}\sign(Y_i-Y_j)\sign(Z_i-Z_j).
\end{align*}

Now let $\pi:i\mapsto [i]_{Z_{1:n}}=:[i]$ be the permutation induced by the order statistics $Z_{[1]}\ge...\ge Z_{[n]}$. Therefore, rewrite
\begin{align*}
\hat\tau(Y_{1:n},Z_{1:n})&=\frac{1}{n(n-1)}\sum_{i,j}\sign(Y_{[i]}-Y_{[j]})\sign(Z_{[i]}-Z_{[j]})\\
&=\frac{2}{n(n-1)}\sum_{[i]>[j]}\sign(Y_{[i]}-Y_{[j]})\sign(Z_{[i]}-Z_{[j]})\\
&=\frac{2}{n(n-1)}\sum_{[i]>[j]}\sign(Y_{[i]}-Y_{[j]})\hspace{1.32cm}\text{(a.s.)}\\
&\overset{\mathcal D}{=}\frac{2}{n(n-1)}\sum_{i>j}\sign(Y_i-Y_j)\hspace{0.63cm}\text{(by independence of }Y\text{ and }Z\text{)}
\end{align*}

Remap $\pi:i\mapsto [i]_{Y_{1:n}}=:[i]$ by the permutation induced by the order statistics $Y_{[1]}\ge...\ge Y_{[n]}$. Obviously, $\sign(X_i-X_j)$ is $-1$ if $\pi$ has an inversion at $(i,j)$. Otherwise $\sign(X_i-X_j)$ is $1$. Denote by  $I_{-}$ the number of inversions in $\pi$ and by  $I_{+}$ the number of all the other pairs $(i,j)$. Clearly, $I_{-}+I_{+}=\binom{n}{2}$. 

Finally, we conclude
\begin{align*}
\hat\tau(Y_{1:n},Z_{1:n})&\overset{\mathcal D}{=}\frac{2}{n(n-1)}\sum_{i>j}\sign(Y_i-Y_j)\\
&=\frac{2}{n(n-1)}\left(I_{+}-I_{-}\right)=1-\frac{4}{n(n-1)}I_{-}.
\end{align*}
\end{proof}

So far we have only used that the components of the standard normal distribution are non-atomic and independent. In the next subsection we will exploit further properties of the normal distribution which makes it difficult to extend the results to further distributions.

Now we reformulate the result of \cite{Clark2000} for the number of permutations whose number of inversions differ exactly by $l$ from $\frac{n(n-1)}{4}$.
\begin{theorem}[\cite{Clark2000}]\label{the:cla}
Fix $\lambda>0$. Let $m=[\lambda^2/2]+2$ and $l\in \Im(I_{-}-\E I_-)$,
then we have
\begin{align}\label{eqn:cla}
\P(|I_{-}-\E I_-|=l)&=12(2\pi)^{-1/2}n^{-3/2}e^{-18l^2/n^3}+r_{n,\lambda,1}(l)+r_{n,\lambda,2}(l),
\end{align}
where the error terms $r_{n,\gamma,1}$ and $r_{n,\gamma,2}$ satisfy for a certain constant $C_\lambda>0$
\begin{align*}
|r_{n,\lambda,1}(l)|&\le C_\lambda\left(n^{-5/2}e^{-18l^2/n^3}n^{-6m+6}l^{4m-4}\right)\\[-0.2cm]
&\hspace{-2.8cm}\text{and}\\[-0.2cm]
|r_{n,\lambda,2}(l)|&\le C_{\lambda}\left(\frac{\log^{2m^2+1}n}{n^{m+3/2}}\right).
\end{align*}
\end{theorem}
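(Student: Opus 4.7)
The plan is to deduce the statement directly from the asymptotic expansion for $I_n(k)$ in \cite{Clark2000}. Let $I_n(k)$ denote the number of permutations on $\{1,\dots,n\}$ with exactly $k$ inversions. Since a uniformly random permutation gives every one of the $n!$ permutations equal weight, the probability mass function of $I_-$ is $\P(I_-=k)=I_n(k)/n!$. I would start by recording the two classical facts about this distribution that set the scale of all constants: $\E I_- = \binom{n}{2}/2 = n(n-1)/4$ and $\Var I_- = n(n-1)(2n+5)/72$, so $\sigma_n^2 := \Var I_- = n^3/36 + O(n^2)$. I would also note the reflection symmetry $I_n(k)=I_n(\binom{n}{2}-k)$, which implies $\P(I_-=\E I_-+l)=\P(I_-=\E I_--l)$ for $l\in\Im(I_--\E I_-)$, so $\P(|I_--\E I_-|=l)=2\P(I_-=\E I_-+l)$ for $l>0$ (with a small case distinction at $l=0$).

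Next I would quote the asymptotic expansion of \cite{Clark2000}, which expresses $I_n(k)/n!$ at $k=\E I_-+l$ as a leading Gaussian density term plus explicit error terms, valid uniformly for $|l|\le \lambda n^{3/2}(\log n)^{1/2}$ (or an equivalent range tailored to the parameter $m=[\lambda^2/2]+2$). The leading term of Clark's expansion is the local central limit approximation $(2\pi\sigma_n^2)^{-1/2}\exp(-l^2/(2\sigma_n^2))$. Substituting $\sigma_n^2=n^3/36(1+O(1/n))$ and $2\sigma_n^2=n^3/18(1+O(1/n))$, I would simplify the leading term to $6(2\pi)^{-1/2}n^{-3/2}e^{-18l^2/n^3}$ and absorb the multiplicative $(1+O(1/n))$ error into the first remainder $r_{n,\lambda,1}$. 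Multiplying by the factor $2$ from the reflection symmetry produces exactly the stated main term $12(2\pi)^{-1/2}n^{-3/2}e^{-18l^2/n^3}$.

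For the error terms, I would match Clark's two types of corrections to $r_{n,\lambda,1}$ and $r_{n,\lambda,2}$. Clark's expansion consists of a finite Edgeworth-type polynomial correction together with a uniform tail error. The polynomial corrections pick up a factor $e^{-18l^2/n^3}$ and powers $l^{4m-4}n^{-6m+6}$ coming from iterated cumulant contributions of order $m$ in the saddle-point / characteristic-function approach; this yields the bound on $r_{n,\lambda,1}$. The uniform tail term, arising from truncating the Fourier inversion integral at scale $(\log n)/n^{3/2}$, produces the bound on $r_{n,\lambda,2}$ with exponent $2m^2+1$ in the logarithmic factor, reflecting the number of stationary-phase style integrations performed. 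Both steps are simple recopying once the normalization $\sigma_n^2\sim n^3/36$ has been fixed.

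The only real obstacle is bookkeeping: Clark's paper uses $k$ rather than the centered variable $l$, and states everything in combinatorial form rather than probabilistic form. Thus the work is essentially (i) dividing by $n!$ to convert counts to probabilities, (ii) expanding Clark's local parameter in terms of $l/n^{3/2}$ and tracking the error introduced by replacing $\sigma_n^2$ by its leading-order value $n^3/36$, and (iii) checking that the reflection symmetry contributes the correct factor $2$ across the entire range $l\in\Im(I_--\E I_-)$, including the trivial verification at the boundary $l=0$. Once these translations are carried out, the claimed identity with the stated error bounds follows verbatim from Clark's theorem.
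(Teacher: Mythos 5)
The paper does not prove this theorem — it is stated as a direct reformulation of Clark's result — so the task is precisely the translation you describe, and the skeleton of your argument is right: pass from counts to probabilities via $\P(I_-=k)=I_n(k)/n!$, use the reflection symmetry $I_n(k)=I_n(\binom{n}{2}-k)$ to fold $\pm l$ into $|l|$ with a factor $2$, and identify $12(2\pi)^{-1/2}n^{-3/2}=2\cdot6(2\pi)^{-1/2}n^{-3/2}$, where $6/n^{3/2}$ is the leading term of $\sigma_n^{-1}$.

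Two details need care. You propose to replace $\sigma_n^2=n(n-1)(2n+5)/72$ by $n^3/36$ and to absorb the resulting multiplicative $(1+O(1/n))$ factor into $r_{n,\lambda,1}$; but that bound carries the factor $l^{4m-4}$, which vanishes for bounded $l$, while the change of normalization introduces an absolute error of order $n^{-5/2}e^{-18l^2/n^3}$ that, for $l\lesssim n^{3/2}(\log n)^{1/4}$ and $m\ge 2$, is dominated by neither $r_{n,\lambda,1}$ nor $r_{n,\lambda,2}$. So this cannot be an "absorption" step — it must instead be a \emph{verification} that Clark already records the leading coefficient as $6(2\pi)^{-1/2}n^{-3/2}$ and the exponent as $18l^2/n^3$, with the $O(1/n)$ normalization corrections folded into his own remainder terms; you cannot re-derive that simplification from the error bounds displayed in this theorem. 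Second, the reflection factor of $2$ is absent at $l=0$, which lies in $\Im(I_--\E I_-)$ whenever $n\equiv 0,1\pmod{4}$; there $r_{n,\lambda,1}(0)=0$ and $r_{n,\lambda,2}=o(n^{-3/2})$, so the error terms cannot compensate a factor-of-$2$ overshoot in the main term, and the identity must be read for $l>0$. You flag this only parenthetically, but it deserves an explicit statement, since it is a genuine (if harmless) imprecision in the theorem as displayed — harmless because the proof of Proposition~\ref{pro:taiind} only ever sums over $x\ge\gamma_{n,p}>0$.
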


Notice that for the second error term $r_{n,\gamma,2}$ we have a uniform upper bound for all $l\in \Im(I_{-}-\E I_-)$. Obviously if $36n^{-3}l^2\le \lambda^2\log n$, the leading term on the right hand side of inequality $(\ref{eqn:cla})$ is the dominating one.
Now Theorem \ref{the:cla} enables to calculate asymptotically sharp bounds on the tail probabilities of $\hat\tau(X_{1:n},Y_{1:n})$.
\begin{proposition}\label{pro:taiind}
Under the assumptions of Proposition \ref{pro:keninv} let  $\gamma,\beta>0$ and $p\in\N$, such that $p<n^\beta$. Then,
\begin{align}
\begin{split}
\P\Bigg(|\hat\tau(Y_{1:n},Z_{1:n})|&\ge \gamma \sqrt{\frac{\log p}{n}}\Bigg)\\
&=2\left(1-\Phi\left(\frac{3}{2}\gamma \sqrt{\log p}\right)\right)+R_{n,p,\beta,\gamma},\label{eqn:taiind}
\end{split}
\end{align}
where the error term $R_{n,p,\beta,\gamma}$ satisfies for $m=[\frac{3}{2}+\beta\gamma^2]+1$ and some constant $C_{\gamma,\beta}>0$
\begin{align}
\left|R_{n,p,\beta,\gamma}\right|\le C_{\gamma,\beta}\frac{\log^{2m^2+1}n}{n}p^{-\frac{9}{8}\gamma^2}.\label{eqn:errbou}
\end{align}
\end{proposition}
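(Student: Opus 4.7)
The plan is, first, to use Proposition~\ref{pro:keninv} to reduce the event $\{|\hat\tau(Y_{1:n},Z_{1:n})|\ge \gamma\sqrt{\log p/n}\}$ to $\{|I_- - \E I_-|\ge l_0\}$ for
\[
l_0 := \tfrac{n(n-1)\gamma}{4}\sqrt{\tfrac{\log p}{n}},
\]
(using $\E I_- = n(n-1)/4$, which follows from $\E\hat\tau = 0$) and to write the corresponding probability as the lattice sum $\sum_{l\ge l_0,\, l\in\Im(I_- - \E I_-)}\P(|I_- - \E I_-| = l)$; the spacing of this lattice is $1$. Into each summand I would substitute the three-term expansion of Theorem~\ref{the:cla} with $\lambda=\lambda(\beta,\gamma)$ chosen so that the index $m=[\lambda^2/2]+2$ is dominated by the claimed $[3/2+\beta\gamma^2]+1$, and simultaneously $\lambda^2$ exceeds $9\beta\gamma^2/4$ — the latter is what will allow Hoeffding to dispose of the far tail below.

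For the leading summand $12(2\pi)^{-1/2} n^{-3/2} e^{-18 l^2/n^3}$, a standard Riemann-sum approximation reduces the sum to the integral
\[
\int_{l_0}^{\infty} \frac{12}{\sqrt{2\pi}}\, n^{-3/2} e^{-18l^2/n^3}\, dl = 2\bigl(1-\Phi(6 l_0/n^{3/2})\bigr),
\]
via the substitution $u = 6l/n^{3/2}$. A short computation gives $6 l_0/n^{3/2} = \tfrac{3(n-1)}{2n}\gamma\sqrt{\log p}$, so replacing this by the target $\tfrac{3}{2}\gamma\sqrt{\log p}$ costs, via the mean-value theorem, at most $\tfrac{3\gamma}{2n}\sqrt{\log p}\cdot\phi\!\bigl(\tfrac{3}{2}\gamma\sqrt{\log p}\bigr) \lesssim n^{-1}\sqrt{\log n}\, p^{-9\gamma^2/8}$, using $\log p\le\beta\log n$ from $(A_6)$. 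The Riemann defect over unit intervals is controlled by the supremum of $|\partial_l(n^{-3/2}e^{-18l^2/n^3})|$ and produces a term of the same or smaller order.

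The two Clark errors $r_{n,\lambda,1},r_{n,\lambda,2}$ are then summed separately. For the Gaussian-like $r_{n,\lambda,1}$, the substitution $u=6l/n^{3/2}$ yields
\[
\sum_{l\ge l_0}|r_{n,\lambda,1}(l)| \lesssim n^{-1}\!\int_{6l_0/n^{3/2}}^{\infty} u^{4m-4} e^{-u^2/2}\, du \lesssim n^{-1}(\log n)^{(4m-5)/2}\, p^{-9\gamma^2/8},
\]
comfortably below the claimed bound (since $2m^2+1 \ge (4m-5)/2$). For the uniform $r_{n,\lambda,2}$ I would truncate at $L := \lambda n^{3/2}\sqrt{\log n}/6$: the contribution of $l_0\le l\le L$ is at most $L\cdot C_\lambda(\log n)^{2m^2+1}/n^{m+3/2}\lesssim n^{-m}(\log n)^{2m^2+3/2}$, which for the prescribed $m$ is absorbed into $n^{-1}p^{-9\gamma^2/8}(\log n)^{2m^2+1}$ using $p\le n^\beta$. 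For the far tail $l>L$ Clark's expansion is abandoned: Hoeffding's inequality applied to the representation of $I_-$ as a sum of independent variables uniform on $\{0,\dots,i-1\}$ gives a sub-Gaussian tail at scale $n^3$, yielding a bound negligible relative to $p^{-9\gamma^2/8}$ once $\lambda^2 > 9\beta\gamma^2/4$.

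The hard part will be the simultaneous balancing of (i) keeping $\lambda$, hence $m$, as small as possible so as to match the prescribed polylogarithmic factor $(\log n)^{2m^2+1}$, and (ii) making the far-tail contribution from $l>L$, where Clark's expansion ceases to dominate, negligible relative to $p^{-9\gamma^2/8}$. Assumption $(A_6)$ is precisely what makes this balancing feasible: without a polynomial growth bound of $p$ in $n$, the target $p^{-9\gamma^2/8}$ could shrink faster than any fixed negative power of $n$, and no $\lambda$-independent choice of the truncation $L$ would close the argument.
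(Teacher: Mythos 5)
Your proposal follows the paper's proof almost step for step: the reduction via Proposition~\ref{pro:keninv} to tails of $I_- - \E I_-$, the three-way split induced by Clark's expansion, the Riemann-sum comparison with the Gaussian integral for the main term, and the integration-by-parts estimate for $r_{n,\lambda,1}$. The one place you genuinely diverge is $J_3$, the contribution of the uniform error $r_{n,\lambda,2}$. The paper does not truncate: it just multiplies the uniform bound by $|\Im(I_- - \E I_-)| = O(n^2)$ lattice points, obtaining $J_3\le C_m(\log n)^{2m^2+1}/n^{m-1/2}$, and then chooses $m$ large enough ($m > \tfrac{3}{2}+\tfrac{9}{8}\beta\gamma^2$) so that this is dominated by $n^{-1}p^{-9\gamma^2/8}$. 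Your truncation at $L\sim\lambda n^{3/2}\sqrt{\log n}$ combined with Hoeffding on the independent-discrete-uniform decomposition of $I_-$ would shave the exponent requirement to roughly $m > 1 + \tfrac{9}{8}\beta\gamma^2$, at the cost of an extra far-tail argument (and one must remember to add back the Gaussian deficit $2(1-\Phi(\lambda\sqrt{\log n}))$ lost by cutting off the main sum at $L$). The decomposition of $I_-$ as a sum of independent uniforms is a neat device the paper never invokes. One quantitative slip: with $L=\lambda n^{3/2}\sqrt{\log n}/6$, Hoeffding gives only about $n^{-\lambda^2/6}$, so to beat $n^{-1}p^{-9\gamma^2/8}$ with $p<n^\beta$ you need $\lambda^2 \gtrsim 6 + \tfrac{27}{4}\beta\gamma^2$, not $\lambda^2 > \tfrac{9}{4}\beta\gamma^2$; this is repaired by enlarging $\lambda$, but then $m=[\lambda^2/2]+2$ correspondingly grows, so your truncation does not in fact rescue the exact $m=[\tfrac{3}{2}+\beta\gamma^2]+1$ of the statement any more than the paper's cruder bound does. (That exact formula for $m$ is not consistent with the paper's own proof either once $\beta\gamma^2$ is large; in both arguments the bound should be read as holding for some $m$ depending on $\beta,\gamma$.)
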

\begin{proof}
Let $\hat\tau:=\hat\tau(Y_{1:n},Z_{1:n})$, $I_0:=6 n^{-3/2}\left(I_--\E I_-\right)$, $\gamma_{n,p}:=\frac{3}{2}\gamma \sqrt{\log p}-\frac{3}{2}\gamma \frac{\sqrt{\log p}}{n}$ and $\lambda>0$ be first arbitrary, then we have
{\allowdisplaybreaks
\begin{align*}
&\P\left(|\hat\tau|\ge \gamma \sqrt{\frac{\log p}{n}}\right)=\P\left(|I_--\E I_-|\ge \frac{1}{4}\gamma n^{3/2}\sqrt{\log p}-\frac{1}{4}\gamma n^{1/2}\sqrt{\log p}\right)\\
=~&\P\left(|I_0|\ge \frac{3}{2}\gamma \sqrt{\log p}-\frac{3}{2}\gamma \frac{\sqrt{\log p}}{n}  \right)\\
=~&\sum_{\substack{x\in \Im(I_{0}):\\x\ge \gamma_{n,p}}}\sqrt{\frac{2}{n^3\pi}}e^{-\frac{x^2}{2}}+\sum_{\substack{x\in \Im(I_{0}):\\x\ge \gamma_{n,p}}}r_{n,\lambda,1}\left(\frac{n^{\frac{3}{2}}x}{6}\right)+\sum_{\substack{x\in \Im(I_{0}):\\x\ge \gamma_{n,p}}}r_{n,\lambda,2}\left(\frac{n^{\frac{3}{2}}x}{6}\right)\\
=:~&J_1+J_2+J_3.
\end{align*}}

We evaluate $J_1$, $J_2$ and $J_3$ separately. We first give upper bounds on the expressions.

{\it Upper bound on $J_1$:}
{\allowdisplaybreaks \begin{align*}
J_1&\le 2\hspace{-0.5cm}\int\limits_{\gamma_{n,p}-6n^{-3/2}}^\infty\hspace{-0.5cm}\phi(x)dx=2\left(1-\Phi\left(\frac{3}{2}\gamma \sqrt{\log p}\right)\right)+2\hspace{-0.2cm}\int\limits_{\gamma_{n,p}-6n^{-3/2}}^{\frac{3}{2}\gamma \sqrt{\log p}}\hspace{-0.2cm}\phi(x) dx\\
&\le 2\left(1-\Phi\left(\frac{3}{2}\gamma \sqrt{\log p}\right)\right)+C_\gamma \frac{\sqrt{\log p}}{n}\phi \left(\frac{3}{2}\gamma\sqrt{\log p}-\frac{3}{2}\frac{\sqrt{\log p}}{n}-6n^{-3/2}\right)\\
&\le 2\left(1-\Phi\left(\frac{3}{2}\gamma \sqrt{\log p}\right)\right)+C_{\gamma,\beta} \frac{\sqrt{\log p}}{n}\phi \left(\frac{3}{2}\gamma\sqrt{\log p}\right)\\
&= 2\left(1-\Phi\left(\frac{3}{2}\gamma \sqrt{\log p}\right)\right)+C_{\gamma,\beta} \frac{\sqrt{\log p}}{n}p^{-\frac{9}{8}\gamma^2}.
\end{align*}}

{\it Upper bound on $J_2$:} By the error bound in Theorem \ref{the:cla} on $r_{n,\lambda,1}$ and integration by parts we similarly conclude
\begin{align*}
J_2\le C_{\gamma,\beta,m}n^{-1}\int\limits_{\frac{3}{2}\gamma \sqrt{\log p}}^\infty x^{4m-4}\phi(x)dx
\le C_{\gamma,\beta,m}\frac{\log^{4m-3}p}{n}p^{-\frac{9}{8}\gamma^2}.
\end{align*}

{\it Lower bound on $J_3$:} Clearly,
\begin{align*}
J_3\le C_{m}\frac{\log^{2m^2+1}n}{n^{m-\frac{1}{2}}}.
\end{align*}

For an integer $m> 3/2+\frac{9}{8}\beta\gamma^2$ we finally have
\begin{align*}
\P\left(|\hat\tau|\ge \gamma \sqrt{\frac{\log p}{n}}\right)\le  2\left(1-\Phi\left(\frac{3}{2}\gamma \sqrt{\log p}\right)\right)+  C_{\gamma,\beta}\frac{\log^{2m^2+1}n}{n}p^{-\frac{9}{8}\gamma^2}.
\end{align*}

{\it Lower bound on $J_1$:} Analogously to the upper bound on $J_1$ we obtain
\begin{align*}
J_1\ge  2\left(1-\Phi\left(\frac{3}{2}\gamma \sqrt{\log p}\right)\right)-\hspace{-0.2cm}\int\limits_{\frac{3\sqrt{n}}{2}-\frac{3}{2\sqrt{n}}}^\infty\hspace{-0.5cm}\phi(x)dx.
\end{align*}

{\it Lower bounds on $J_2$ and $J_3$:} We have
\begin{align*}
J_2\ge -C_{\gamma,\beta,m}\frac{\log^{4m-3}p}{n}p^{-\frac{9}{8}\gamma^2} \text{ and }J_3\ge C_{m}\frac{\log^{2m^2+1}n}{n^{m-\frac{1}{2}}}.
\end{align*}

Hence, again pick $m> 3/2+\frac{9}{8}\beta\gamma^2$ and derive
\begin{align*}
\P\left(|\hat\tau|\ge \gamma \sqrt{\frac{\log p}{n}}\right)\ge  2\left(1-\Phi\left(\frac{3}{2}\gamma \sqrt{\log p}\right)\right)-  C_{\gamma,\beta}\frac{\log^{2m^2+1}n}{n}p^{-\frac{9}{8}\gamma^2}.
\end{align*}

Combing both bounds provides the desired statement.
\end{proof}


\subsection{Tails of Kendall's tau sample correlation for normal distributions with weak correlation}

In this subsection we transfer Proposition \ref{pro:taiind} to normal distributed random variables $(Y,Z)$ with weak correlation $\sigma$ and give some conclusions from it. The crucial argument to evaluate the tail probabilities of $\hat\tau(Y_{1:n},Z_{1:n})-\tau(Y,Z)$ for an i.i.d. sample $(Y_1,Z_1),...,(Y_n,Z_n)\overset{D}{=}(Y,Z)$ is to approximate $\hat\tau(Y_{1:n},Z_{1:n})-\tau(Y,Z)$ by Kendall's tau sample correlation for an appropriate sample $(W_1,Z_1),...,(W_n,Z_n)$ with uncorrelated components.~For ease of notations suppose that $Y$ and $Z$ are standardized. Then $Y$ can be written as $Y=\sqrt{1-\sigma^2}W+\sigma Z$ for $(W,Z)\sim \mathcal{N}_2(\mu,\Id)$. This is a natural candidate for the approximation argument.

\begin{lemma}\label{lem:flusma}
Let $(W_1,Z_1),...,(W_n,Z_n)\widesim{i.i.d.} \mathcal{N}_2(\mu,\Id)$, $Y_i=\sqrt{1-\sigma^2}W_i+\sigma Z_i,~i=1,...,n$, where $\sigma^2\le \zeta^2\frac{\log p}{n}\wedge \frac{3}{4}$, $1<n<p\le n^\beta$ for a constant $\beta>1$ and $\zeta>0$. Then for  $c_n= \lambda\frac{n\sqrt{\log p}}{(n-1)n^{1/4}}$, $\lambda>0$, holds
\begin{align*}
\P\left(\left|\hat\tau(Y_{1:n},Z_{1:n})-\tau(Y_1,Z_1)-\hat\tau(W_{1:n},Z_{1:n})\right|\ge c_n\sqrt\frac{\log p}{n}\right)\le 2np^{-C_{\beta,\zeta,\lambda}\sqrt{\log p}}.
\end{align*}
\end{lemma}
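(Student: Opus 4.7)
The plan is to recast the left-hand side as a centered U-statistic of order $2$ with a bounded kernel of small variance, and then apply a Bernstein-type inequality that exploits this smallness.

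First I would observe that $(Y_i,W_i,Z_i)$ is an i.i.d.\ sequence of triples, that $\E[\hat\tau(Y_{1:n},Z_{1:n})]=\tau(Y_1,Z_1)$, and that $\E[\hat\tau(W_{1:n},Z_{1:n})]=0$ since $W_1,Z_1$ are independent standard normals. Consequently
\begin{align*}
\hat\tau(Y_{1:n},Z_{1:n})-\tau(Y_1,Z_1)-\hat\tau(W_{1:n},Z_{1:n})
=\frac{1}{n(n-1)}\sum_{k\neq l}\bigl(h_{kl}-\E h_{kl}\bigr),
\end{align*}
with kernel $h_{kl}=\bigl[\sign(Y_k-Y_l)-\sign(W_k-W_l)\bigr]\sign(Z_k-Z_l)\in\{-2,0,2\}$ and $\E h_{kl}=\tau(Y_1,Z_1)$. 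Thus the problem reduces to showing that this centered U-statistic of order $2$ is smaller than $c_n\sqrt{\log p/n}$ with the stated probability.

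Next I would bound the variance of $h_{kl}$. Using the identity $Y_k-Y_l=\sqrt{1-\sigma^2}(W_k-W_l)+\sigma(Z_k-Z_l)$, $h_{kl}$ can be nonzero only when the two signed terms are of opposite signs and the second dominates, which forces $|W_k-W_l|\le \tfrac{|\sigma|}{\sqrt{1-\sigma^2}}|Z_k-Z_l|$. Conditioning on $Z_{1:n}$, the law of $W_k-W_l$ is $\mathcal{N}(0,2)$ with density $1/(2\sqrt{\pi})$ at the origin, so
\begin{align*}
\P(h_{kl}\neq 0\mid Z)\le \frac{|\sigma|}{\sqrt{\pi(1-\sigma^2)}}\,|Z_k-Z_l|,
\end{align*}
and integrating over $Z$ yields $\P(h_{kl}\neq 0)\le C|\sigma|\le C\zeta\sqrt{\log p/n}$, hence $\Var(h_{kl})\le 4\,\P(h_{kl}\neq 0)\le C\zeta\sqrt{\log p/n}$. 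This is the decisive estimate: the kernel is bounded by $2$ but its variance shrinks like $\sqrt{\log p/n}$.

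Finally I would apply a Bernstein-type inequality for U-statistics via the Hoeffding decomposition $h_{kl}-\E h_{kl}=h_1(\xi_k)+h_1(\xi_l)+h_2(\xi_k,\xi_l)$ with $\xi_i=(Y_i,W_i,Z_i)$. For $t=c_n\sqrt{\log p/n}$ one has $t\asymp \lambda\,\log p/n^{3/4}$, and since $\Var(h_{kl})\gtrsim t$ under the assumption $\log p\lesssim n^{1/2}$, the linear regime of Bernstein controls the first-order part by
\begin{align*}
\exp\!\Bigl(-\tfrac{c\,n t^{2}}{\Var(h_{kl})}\Bigr)\;\asymp\;\exp\!\Bigl(-\tfrac{c\lambda^{2}}{\zeta}(\log p)^{3/2}\Bigr)\;=\;p^{-C_{\beta,\zeta,\lambda}\sqrt{\log p}},
\end{align*}
which is exactly the claimed rate. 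The main obstacle is the degenerate second-order term $h_2$: the small-variance argument does not apply to it directly, and one must invoke a dedicated tail inequality for canonical bivariate U-statistics (Giné--Latała--Zinn type) or, alternatively, condition on $Z_{1:n}$ and exploit that $W_k-W_l$ is independent across disjoint pairs so that the conditional problem reduces to an independent-sums Bernstein bound. Either route yields the same exponential order, and the extra factor $2n$ in the claim is comfortably large enough to absorb the polynomial prefactors from these inequalities (and an auxiliary union bound on, say, $\max_i|Z_i|\le C\sqrt{\log p}$ used to tame the conditional variance).
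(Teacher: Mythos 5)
Your proposal captures the two key ideas that make the paper's proof work, and both are essentially identical to the paper's: (i) the variance of the kernel is small, $\Var(h_{kl})\lesssim|\sigma|\lesssim\zeta\sqrt{\log p/n}$, which you derive from the small density of $W_k-W_l$ near the origin --- the paper does the same thing in the form $\P(|(W_{2k-1}-W_{2k})/(Z_{2k-1}-Z_{2k})|<2|\sigma|)\lesssim|\sigma|$ using that the ratio is standard Cauchy; and (ii) in the linear Bernstein regime (which holds because $\log p\lesssim\log n\ll n^{1/2}$) this small variance drives the exponent to order $(\log p)^{3/2}$, giving $p^{-C\sqrt{\log p}}$. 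So the quantitative heart of your argument is correct and matches the paper.

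Where you and the paper diverge is in how the U-statistic structure is dispatched. Your primary route --- Hoeffding decomposition into $h_1$ and a canonical $h_2$, Bernstein for the linear part, Gin\'e--Lata{\l}a--Zinn for the degenerate part --- would indeed work, but it drags in machinery the paper avoids. The paper instead uses the elementary observation that the complete graph on $n$ (or $n+1$, for odd $n$) vertices $1$-factorizes into at most $n$ perfect matchings: the sum over all $\binom{n}{2}$ pairs is a sum over matchings, and a union bound over the matchings reduces the whole problem to a Bernstein bound for a sum of $\lfloor n/2\rfloor$ genuinely i.i.d.\ bounded terms $\varepsilon_k$, with the $n$ from the union bound appearing as the prefactor $2n$ in the statement. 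Your ``alternative route'' (exploit independence of $W_k-W_l$ across disjoint pairs) is the right instinct and is morally the same as the paper's, but as stated it is vaguer than the matching decomposition and the $Z$-conditioning you suggest is unnecessary --- within a single matching the summands are already fully independent, not merely conditionally so. In short: your variance estimate and exponent bookkeeping are correct and match the paper; the paper's matching-plus-union-bound trick is a cleaner and more self-contained way to get from the U-statistic to an independent-sums Bernstein bound than either of the two routes you sketch.
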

\begin{proof}
By $1$-factorization of the complete graph on $n+2\left(\frac{n}{2}-\left[\frac{n}{2} \right]\right)$ vertices and by the union bound we conclude
{\allowdisplaybreaks \begin{align*}
&\P\left(|\hat\tau(Y_{1:n},Z_{1:n})-\tau(Y_1,Z_1)-\hat\tau(W_{1:n},Z_{1:n})|\ge c_n\sqrt{\frac{\log p}{n}}\right) \\
=~&\P\Bigg(\Bigg|\frac{2}{n-1}\sum_{\substack{k,l=1 \\ k<l}}^n\left(\left(\sign(Y_k-Y_l)-\sign\left(W_k-W_l\right)\right)\right.\\[-0.55cm]
&\hspace{4.3cm}\times\sign\left.\left(Z_k-Z_l\right)-\tau(Y_1,Z_1)\right)\Bigg|\ge c_n\sqrt{n\log p}\Bigg)\\
\le~& 
n\P\Bigg(\Bigg|\frac{2}{n-1}\sum_{l=1}^{\left[\frac{n}{2}\right]}\Big(\left(\sign(Y_{2k-1}-Y_{2k})-\sign\left(W_{2k-1}-W_{2k}\right)\right)\\[-0.2cm]
&\hspace{3.75cm}\times\sign\left(Z_{2k-1}-Z_{2k}\right)-\tau(Y_1,Z_1)\Big)\Bigg|\ge c_n\sqrt{\frac{\log p}{n}} \Bigg).
\end{align*}}

Now let \begin{align*}\varepsilon_k:=\frac{1}{3}\left(\left(\sign(Y_{2k-1}-Y_{2k})-\sign(W_{2k-1}-W_{2k})\right)\sign(Z_{2k-1}-Z_{2k})-\tau(X,Y)\right).\end{align*}

Obviously, the random variables $\varepsilon_k$ are centered and bounded in absolute value by $1$. We evaluate the variance of $\varepsilon_k$ to apply Bernstein inequality. We have
{\allowdisplaybreaks\begin{align*}
&\Var\left(\varepsilon_k\right)=\E\varepsilon_k^2\le\frac{1}{9}\E\left(\sign\left(Y_{2k-1}-Y_{2k}\right)-\sign\left(W_{2k-1}-W_{2k}\right)\right)^2\\
\le&\P(\sign(Y_{2k-1}-Y_{2k})\neq\sign(W_{2k-1}-W_{2k}))\\
=&\P\left(\sign\left(\sqrt{1-\sigma^2}\left(W_{2k-1}-W_{2k}\right)+\sigma \left(Z_{2k-1}-Z_{2k}\right)\right)\neq  \sign\left(W_{2k-1}-W_{2k}\right)\right)\\
\le&\P\left(\sqrt{1-\sigma^2}\left|W_{2k-1}-W_{2k}\right|<|\sigma| \left|Z_{2k-1}-Z_{2k}\right|\right)\\
\le&\P\left(\left|\frac{W_{2k-1}-W_{2k}}{Z_{2k-1}-Z_{2k}}\right|<2|\sigma|\right)\\
\le&|\sigma|,
\end{align*}}where the last line follows easily from the fact that $\frac{W_{2k-1}-W_{2k}}{Z_{2k-1}-Z_{2k}}$ is standard Cauchy distributed and therefore its density is bounded by $\pi^{-1}$. Finally, we conclude by Bernstein inequality
\begin{align*}
&\P\left(|\hat\tau(Y_{1:n},Z_{1:n})-\tau(Y_1,Z_1)-\hat\tau(W_{1:n},Z_{1:n})|\ge c_n\sqrt{\frac{\log p}{n}}\right)\\ 
&\hspace{5.5cm}\le n\P\Bigg(\Bigg|\sum_{l=1}^{\left[\frac{n}{2}\right]}\varepsilon_k\Bigg|\ge c_n\frac{n-1}{6n} \sqrt{n\log p}\Bigg)\\[0.2cm]
&\hspace{5.5cm}\le2np^{-C_{\beta,\zeta,\lambda}\sqrt{\log p}}.
\end{align*} 
\end{proof}
\begin{proposition}\label{pro:lardevken}
Let $(Y_1,Z_1),...,(Y_n,Z_n)\widesim{i.i.d.} \mathcal{N}_2(\mu,\Sigma)$, $\Sigma_{11}, \Sigma_{22}>0$, where $|\frac{\Sigma_{12}}{\sqrt{\Sigma_{11}\Sigma_{22}}}|\le \zeta \sqrt{\frac{\log p}{n}}\wedge \frac{3}{4}$ for an arbitrary constant $\zeta>0$ and $n< p \le n^\beta$, $\beta>1$. Then for any real number $\gamma>0$ holds
\begin{align}\label{eqn:conweacor0}
\begin{split}
\P\left(|\hat\tau(Y_{1:n},Z_{1:n})-\tau(Y_1,Z_1)|\right.&\left.\ge \gamma\sqrt{\frac{\log p}{n}}\right)\\
&=2\left(1-\Phi\left(\frac{3}{2}\gamma\sqrt{\log p}\right)\right)+R_{n,p,\beta,\zeta,\gamma},
\end{split}
\end{align}
where the error $R_{n,p,\beta,\zeta,\gamma}$ satisfies for some constant $C_{\beta,\zeta,\gamma}>0$
\begin{align}
|R_{n,p,\beta,\zeta,\gamma}|\le C_{\beta,\zeta,\gamma}\frac{\log p}{n^{1/4}}p^{-\frac{9}{8}\gamma^2}.
\end{align}
\end{proposition}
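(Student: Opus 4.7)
The plan is to leverage the two preparatory results already available: Proposition \ref{pro:taiind} gives a sharp tail expansion for Kendall's tau in the independent case, while Lemma \ref{lem:flusma} shows that the centered statistic $\hat\tau(Y_{1:n},Z_{1:n}) - \tau(Y_1,Z_1)$ is well approximated by the independent-case statistic $\hat\tau(W_{1:n},Z_{1:n})$, where $W_i = (Y_i - \sigma Z_i)/\sqrt{1-\sigma^2}$ so that $(W_i,Z_i)$ are i.i.d.\ standard bivariate normal (after the trivial standardization of $Y,Z$). So I would first reduce to the uncorrelated situation by the coupling and then plug in the expansion for the standard normal case.

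Concretely, write $A := \hat\tau(Y_{1:n},Z_{1:n}) - \tau(Y_1,Z_1)$ and $B := \hat\tau(W_{1:n},Z_{1:n})$, and set $c_n := \lambda\, n\sqrt{\log p}/((n-1)n^{1/4})$ with $\lambda>0$ to be chosen (a small constant will do). Lemma \ref{lem:flusma} yields
\[
\P\bigl(|A - B| \ge c_n\sqrt{\tfrac{\log p}{n}}\bigr) \le 2n\,p^{-C_{\beta,\zeta,\lambda}\sqrt{\log p}},
\]
which is super-polynomially small in $p$. On the complementary event we have the inclusion
\[
\{|B| \ge (\gamma + c_n)\sqrt{\tfrac{\log p}{n}}\} \subseteq \{|A| \ge \gamma\sqrt{\tfrac{\log p}{n}}\} \subseteq \{|B| \ge (\gamma - c_n)\sqrt{\tfrac{\log p}{n}}\},
\]
so that $\P(|A| \ge \gamma\sqrt{\log p/n})$ is sandwiched between $\P(|B| \ge (\gamma\pm c_n)\sqrt{\log p/n})$ up to an additive error of order $2n\,p^{-C\sqrt{\log p}}$, which is absorbed in the claimed bound.

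Next, apply Proposition \ref{pro:taiind} with $\gamma$ replaced by $\gamma \pm c_n$ (this is permissible since $c_n \to 0$, so for $n$ large the perturbed constant stays in any prescribed range, and the constants $C_{\gamma,\beta}$ can be taken uniform in a neighborhood of $\gamma$). The critical step is to quantify the effect of replacing $\gamma$ by $\gamma \pm c_n$ in the main term $2(1-\Phi(\tfrac{3}{2}\gamma\sqrt{\log p}))$. By the mean value theorem,
\[
\Bigl| 2\bigl(1-\Phi(\tfrac{3}{2}(\gamma\pm c_n)\sqrt{\log p})\bigr) - 2\bigl(1-\Phi(\tfrac{3}{2}\gamma\sqrt{\log p})\bigr)\Bigr| \le 3 c_n\sqrt{\log p}\; \phi\bigl(\tfrac{3}{2}\gamma'\sqrt{\log p}\bigr),
\]
for some $\gamma'$ between $\gamma - c_n$ and $\gamma + c_n$. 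Since $\phi(\tfrac{3}{2}\gamma'\sqrt{\log p}) = (2\pi)^{-1/2} p^{-9\gamma'^2/8}$ and $\gamma' = \gamma + o(1)$, this contribution is of order $c_n \sqrt{\log p}\, p^{-9\gamma^2/8} \asymp \tfrac{\log p}{n^{1/4}}\, p^{-9\gamma^2/8}$, which is precisely the order of the claimed remainder. The residual error from Proposition \ref{pro:taiind} itself is of order $\log^{2m^2+1}n\cdot n^{-1}\cdot p^{-9\gamma'^2/8}$ which is (for any fixed $\gamma$, after adjusting $m$) even smaller than the target; and the super-polynomial slack $2np^{-C\sqrt{\log p}}$ is swallowed by the same bound since $\sqrt{\log p}\to\infty$.

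The main obstacle is the careful bookkeeping in the second step: one must verify that the perturbation $\gamma\mapsto \gamma\pm c_n$ does not degrade the exponent $p^{-9\gamma^2/8}$ by more than a factor absorbable into $\log p / n^{1/4}$, and that the choice of $\lambda$ (and hence of $c_n$) can be made so that $2n p^{-C_{\beta,\zeta,\lambda}\sqrt{\log p}}$ is dominated by the claimed remainder uniformly. The rest is a routine application of the sandwich inclusions together with Proposition \ref{pro:taiind}.
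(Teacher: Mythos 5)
Your plan coincides with the paper's proof at the level of strategy: use the coupling $Y_i=\sqrt{1-\sigma^2}W_i+\sigma Z_i$, sandwich $\P(|A|\ge\gamma\sqrt{\log p/n})$ between $\P(|B|\ge(\gamma\pm c_n)\sqrt{\log p/n})$ up to the coupling error controlled by Lemma \ref{lem:flusma}, apply Proposition \ref{pro:taiind} at the shifted constants $\gamma\pm c_n$, and convert back to $\Phi(\tfrac{3}{2}\gamma\sqrt{\log p})$ by the mean value theorem. There is one technical divergence worth flagging: you take a fixed constant $\lambda$ in $c_n$, exactly as in the statement of Lemma \ref{lem:flusma}, while the paper picks $c_n = 12\, n\sqrt{|\sigma|}\sqrt{\log p}/((n-1)n^{1/4})$, i.e.\ it lets $\lambda=12\sqrt{|\sigma|}$ shrink with the weak correlation. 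Your choice is the more careful one: in the Bernstein step of Lemma \ref{lem:flusma} the variance of $\sum_k\varepsilon_k$ is bounded only by $n|\sigma|/2$, so scaling the threshold by $\sqrt{|\sigma|}$ makes it comparable to the standard deviation (the ratio threshold-to-s.d.\ is $\asymp \log p/n^{1/4}\to 0$) and the super-polynomial bound $2np^{-C\sqrt{\log p}}$ does not follow from that computation; with a fixed $\lambda$ the ratio diverges like $(\log p)^{3/4}$ and the decay is genuine. Your mean-value estimate $c_n\sqrt{\log p}\,p^{-\frac{9}{8}\gamma^2}\asymp(\log p/n^{1/4})\,p^{-\frac{9}{8}\gamma^2}$ then matches the stated remainder, and both the coupling error and the expansion error from Proposition \ref{pro:taiind} are dominated by it. One small step to make explicit: $p^{\frac{9}{8}(\gamma^2-(\gamma\mp c_n)^2)}\le p^{\frac{9}{4}\gamma c_n}=\exp\bigl(\tfrac{9}{4}\gamma\lambda(\log p)^{3/2}/n^{1/4}\bigr)$ is bounded because $p\le n^\beta$, so each $p^{-\frac{9}{8}(\gamma\mp c_n)^2}$ may be replaced by $p^{-\frac{9}{8}\gamma^2}$ at the cost of a constant.
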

\begin{proof}
Let $\sigma:=\frac{\Sigma_{12}}{\sqrt{\Sigma_{11}\Sigma_{22}}}$. W.l.o.g. assume that $(W_1,Z_1),...,(W_n,Z_n)\widesim{i.i.d.} \mathcal{N}_2(0,\Id)$ and $Y_i=\sqrt{1-\sigma^2}W_i+\sigma Z_i,~i=1,...,n$. Pick $c_n:=12\frac{n\sqrt{|\sigma|}\sqrt{\log p}}{(n-1)n^{1/4}}$. 

First we give an upper bound on $\P\left(|\hat\tau(Y_{1:n},Z_{1:n})-\tau(Y_1,Z_1)|\ge \gamma\sqrt{\frac{\log p}{n}}\right)$. We have
\allowdisplaybreaks{\begin{align*}
&\P\left(|\hat\tau(Y_{1:n},Z_{1:n})-\tau(Y_1,Z_1)|\ge \gamma\sqrt{\frac{\log p}{n}}\right)\\
\le~&\P\left(\left|\hat\tau(Y_{1:n},Z_{1:n})-\tau(Y_1,Z_1)-\hat\tau(W_{1:n},Z_{1:n})\right|+|\hat\tau(W_{1:n},Z_{1:n})|\ge \gamma\sqrt{\frac{\log p}{n}}\right)\\
\le~&\P\left(\left|\hat\tau(W_{1:n},Z_{1:n})\right|\ge (\gamma-c_n)\sqrt{\frac{\log p}{n}}\right)\\
&\hspace{2.25cm}+\P\left(\left|\hat\tau(Y_{1:n},Z_{1:n})-\tau(Y_1,Z_1)-\hat\tau(W_{1:n},Z_{1:n})\right|\ge c_n\sqrt\frac{\log p}{n}\right).
\end{align*}}

The second summand is easily handled by Lemma $\ref{lem:flusma}$. For the first summand notice that $\gamma-c_n$ is bounded above uniformly for all $n$. Therefore we apply equation $(\ref{eqn:taiind})$, where the constant in the error bound $(\ref{eqn:errbou})$ can be chosen independently from $n$, such that
\begin{align*}
|R_{n,p,\beta,\gamma-c_n}|&\le C_{\gamma,\beta,\zeta}\frac{\log^{2m^2+1}n}{n}p^{-\frac{9}{8}(\gamma-c_n)^2}\le C_{\gamma,\beta,\zeta}\frac{\log^{2m^2+1}n}{n}p^{-\frac{9}{8}\gamma^2}.
\end{align*}
Hence, equation $(\ref{eqn:taiind})$ yields
\begin{align*}
P\Bigg(\left|\hat\tau(W_{1:n},Z_{1:n})\right|&\ge(\gamma-c_n)\sqrt{\frac{\log p}{n}}\Bigg)\\
&\hspace{-0.6cm}\le 2\left(1-\Phi\left(\frac{3}{2}(\gamma-c_n) \sqrt{\log p}\right)\right)+C_{\gamma,\beta,\zeta}\frac{\log^{2m^2+1}n}{n}p^{-\frac{9}{8}\gamma^2}\\
&\hspace{-0.6cm}\le 2\left(1-\Phi\left(\frac{3}{2}\gamma \sqrt{\log p}\right)\right)+C_{\gamma,\beta,\zeta}\frac{\log p}{n^{1/4}}p^{-\frac{9}{8}\gamma^2}.
\end{align*}
This provides the upper bound.
The lower bound arises from the following computation, where we finally apply Proposition \ref{pro:taiind} and Lemma $\ref{lem:flusma}$ again:
{\allowdisplaybreaks
\begin{align*}
&\P\left(|\hat\tau(Y_{1:n},Z_{1:n})-\tau(Y_1,Z_1)|\ge \gamma\sqrt{\frac{\log p}{n}}\right)\\
\ge~&\P\left(\left|\hat\tau(W_{1:n},Z_{1:n})\right|-\left|\hat\tau(W_{1:n},Z_{1:n})-\hat\tau(Y_{1:n},Z_{1:n})+\tau(Y_,Z_1)\right|\ge \gamma \sqrt{\frac{\log p}{n}}\right)\\
\ge~&\P\Bigg(\left|\hat\tau(W_{1:n},Z_{1:n})\right|-\left|\hat\tau(W_{1:n},Z_{1:n})-\hat\tau(Y_{1:n},Z_{1:n})+\tau(Y_1,Z_1)\right|\ge \gamma \sqrt{\frac{\log p}{n}},\\
&\hspace{3cm}\left|\hat\tau(W_{1:n},Z_{1:n})-\hat\tau(Y_{1:n},Z_{1:n})+\tau(Y_1,Z_1)\right|<c_n\sqrt\frac{\log p}{n}\Bigg)\\
\ge~&\P\left(\left|\hat\tau(W_{1:n},Z_{1:n})\right|\ge(\gamma+c_n)\sqrt\frac{\log p}{n}\right)\\
&\hspace{2cm}-\P\left(\left|\hat\tau(W_{1:n},Z_{1:n})-\hat\tau(Y_{1:n},Z_{1:n})+\tau(Y_1,Z_1)\right|\ge c_n\sqrt\frac{\log p}{n}\right).
\end{align*}}
\end{proof}

We close this section with two straightforward consequences from the last proposition.
\begin{corollary}\label{lem:conweacor}
Under the assumptions of Proposition \ref{pro:lardevken} let $\gamma>0$. Then,
\begin{align}\label{eqn:conweacor}
\begin{split}
\tilde C_{\beta,\zeta,\gamma}\Phi\left(\frac{3}{2}\gamma\sqrt{\log p}\right)&\le\P\left(|\hat\tau(Y_{1:n},Z_{1:n})-\tau(Y_1,Z_1)|\ge \gamma\sqrt{\frac{\log p}{n}}\right)\\
&\le C_{\beta,\zeta,\gamma}\Phi\left(\frac{3}{2}\gamma\sqrt{\log p}\right),
\end{split}
\end{align}
where the constants $\tilde C_{\beta,\zeta,\gamma},C_{\beta,\zeta,\gamma}>0$ depend on $\beta,~ \zeta$ and $\gamma$.
\end{corollary}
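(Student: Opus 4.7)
The plan is to deduce both inequalities directly from the sharp expansion in Proposition~\ref{pro:lardevken}, by showing that the remainder $R_{n,p,\beta,\zeta,\gamma}$ is of strictly smaller order than the Gaussian leading term $2\bigl(1-\Phi(\tfrac{3}{2}\gamma\sqrt{\log p})\bigr)$. (I read the bounding quantity on the right-hand side of the corollary as $1-\Phi(\tfrac{3}{2}\gamma\sqrt{\log p})$; as written, $\Phi$ itself is bounded below by $\tfrac{1}{2}$ on $(0,\infty)$ and would render the upper bound vacuous.)

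First I would substitute the identity from Proposition~\ref{pro:lardevken}:
\[
\P\!\left(|\hat\tau(Y_{1:n},Z_{1:n})-\tau(Y_1,Z_1)|\ge \gamma\sqrt{\tfrac{\log p}{n}}\right)=2\bigl(1-\Phi(\tfrac{3}{2}\gamma\sqrt{\log p})\bigr)+R,
\]
with $|R|\le C_{\beta,\zeta,\gamma}\,(\log p)\,n^{-1/4}\,p^{-9\gamma^2/8}$. Next I would invoke the classical Mills ratio sandwich $\tfrac{\phi(x)}{x}(1-x^{-2})\le 1-\Phi(x)\le\tfrac{\phi(x)}{x}$, applied at $x=\tfrac{3}{2}\gamma\sqrt{\log p}$, to identify the order of the leading term as a $\gamma$-dependent constant times $p^{-9\gamma^2/8}/\sqrt{\log p}$, uniformly once $\log p$ exceeds a $\gamma$-dependent threshold.

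The ratio of the remainder bound to this leading order is of size $(\log p)^{3/2}n^{-1/4}$, which tends to zero uniformly thanks to $p\le n^\beta$. Hence for $n$ larger than some threshold $n_0=n_0(\beta,\zeta,\gamma)$ the remainder is at most half the leading term, and the two-sided sandwich follows with explicit universal constants (say $1$ and $3$). The finitely many remaining configurations $(n,p)$ with $n<n_0$ are absorbed into the constants, since both sides of the inequalities then lie in a compact subinterval of $(0,\infty)$ determined by $n_0,\beta,\zeta,\gamma$. The whole argument is essentially bookkeeping; the one substantive input is the Mills ratio comparison, which ensures that the polynomial decay $p^{-9\gamma^2/8}$ in the remainder is offset by no more than a $\sqrt{\log p}$ factor in the main Gaussian tail, so no real obstacle arises beyond careful constant tracking.
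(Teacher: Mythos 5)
Your argument is correct and matches the route the paper has in mind: Corollary~\ref{lem:conweacor} is presented only as a ``straightforward consequence'' of Proposition~\ref{pro:lardevken}, and your combination of that expansion with the Mills-ratio sandwich, the observation that the remainder is dominated by a factor $(\log p)^{3/2}n^{-1/4}\to 0$ under $p\le n^\beta$, and absorption of the finitely many small-$n$ configurations into the constants is exactly the intended bookkeeping. You also correctly read the $\Phi$ in the display as the Gaussian tail $1-\Phi$; the companion Corollary~\ref{cor:asy} and the application of the lower bound in the proof of Theorem~\ref{the:optuppbou} (via $\phi(x)/(x+1/x)<1-\Phi(x)$) both confirm that this is what is meant.
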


In the final corollary the sample $(Y_1,Z_1),...,(Y_n,Z_n)$ and the quantities $\mu,\Sigma$ depend on $n$ even if it is not apparent from the notation.
\begin{corollary}\label{cor:asy}
Let $(Y_1,Z_1),...,(Y_n,Z_n)\widesim{i.i.d.} \mathcal{N}_2(\mu,\Sigma,\xi)$, $\Sigma_{11}, \Sigma_{22}>0$, where $\left|\frac{\Sigma_{12}}{\sqrt{\Sigma_{11}\Sigma_{12}}}\right|\le \zeta \sqrt{\frac{\log p}{n}}\wedge \frac{3}{4}$ for an arbitrary constant $\zeta>0$ and $n< p \le n^\beta$, $\beta>1$. Then for any real number $\gamma>0$ holds
\begin{align}\label{eqn:conweacor2}
\underset{n\to\infty}{\lim}\frac{\P\left(|\hat\tau(Y_{1:n},Z_{1:n})-\tau(Y_1,Z_1)|\ge \gamma\sqrt{\frac{\log p}{n}}\right)}{2\left(1-\Phi\left(\frac{3}{2}\gamma\sqrt{\log p}\right)\right)}=1.
\end{align}
\end{corollary}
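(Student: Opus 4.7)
The plan is to read Corollary~\ref{cor:asy} off directly from Proposition~\ref{pro:lardevken}. By that proposition,
\[
\P\!\left(|\hat\tau(Y_{1:n},Z_{1:n})-\tau(Y_1,Z_1)|\ge \gamma\sqrt{\tfrac{\log p}{n}}\right) = 2\Bigl(1-\Phi\bigl(\tfrac{3}{2}\gamma\sqrt{\log p}\bigr)\Bigr)+R_{n,p,\beta,\zeta,\gamma},
\]
with $|R_{n,p,\beta,\zeta,\gamma}| \le C_{\beta,\zeta,\gamma}\,\tfrac{\log p}{n^{1/4}}\,p^{-9\gamma^2/8}$. Dividing through by the Gaussian probability on the right, it suffices to show that
\[
\frac{R_{n,p,\beta,\zeta,\gamma}}{2\bigl(1-\Phi(\tfrac{3}{2}\gamma\sqrt{\log p})\bigr)}\ \xrightarrow{n\to\infty}\ 0.
\]

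For the denominator I will use the standard Mills ratio lower bound $1-\Phi(x)\ge \tfrac{x}{x^2+1}\phi(x)$ for $x>0$. Applied at $x_n:=\tfrac{3}{2}\gamma\sqrt{\log p}$ and using $\phi(x_n)=(2\pi)^{-1/2}p^{-9\gamma^2/8}$, this gives, for all sufficiently large $p$,
\[
1-\Phi(x_n)\ \ge\ c_\gamma\,\frac{1}{\sqrt{\log p}}\,p^{-9\gamma^2/8},
\]
with a constant $c_\gamma>0$ depending only on $\gamma$. The decisive feature is that the exponential factor $p^{-9\gamma^2/8}$ here matches exactly the one appearing in the upper bound on $|R_{n,p,\beta,\zeta,\gamma}|$.

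Combining both bounds, the $p^{-9\gamma^2/8}$ factors cancel and what remains is at most $C'_{\beta,\zeta,\gamma}(\log p)^{3/2}/n^{1/4}$. Since $p\le n^\beta$ yields $\log p\le \beta\log n$, this expression tends to $0$ as $n\to\infty$, which proves the corollary. There is no real obstacle beyond choosing the right Gaussian tail bound; all the heavy lifting—the matching leading constant $\tfrac{3}{2}$ in the Gaussian argument and the polynomial decay rate $p^{-9\gamma^2/8}$—is already performed in Proposition~\ref{pro:lardevken}, and the polylogarithmic slack is absorbed by the $n^{-1/4}$ term uniformly over the permitted range $n<p\le n^\beta$.
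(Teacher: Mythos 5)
Your proof is correct and is essentially what the paper has in mind (it leaves both corollaries to the reader as ``straightforward consequences'' of Proposition~\ref{pro:lardevken}). The key observation — that the exponential factor $p^{-9\gamma^2/8}$ in the error bound matches the one produced by Mills' ratio applied to $1-\Phi(\tfrac{3}{2}\gamma\sqrt{\log p})$, leaving only a $(\log p)^{3/2}/n^{1/4}$ polylogarithmic residual that vanishes under $p\le n^\beta$ — is exactly the intended argument, and the Mills-ratio bound you invoke is the same one the paper uses explicitly later in the proof of Theorem~\ref{the:optuppbou}.
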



\section{Summary and discussion}\label{sec:dis}

In this article we have studied the question how much information an entrywise hard threshold matrix estimator is allowed to keep from the pilot estimate $\sin[{\frac{\pi}{2}\hat\tau}]$ to obtain an adaptive rate optimal estimator for a sparse correlation matrix. It is shown that $\alpha^\ast(\frac{\log{p}}{n})^{1/2},~\alpha^\ast=\frac{2\sqrt{2}}{3},$ is a critical threshold level on the entries of $\hat\tau$ for Gaussian observations. This means that any threshold constant $\alpha>\alpha^\ast$ provides an adaptive minimax estimator whereas for $\alpha<\alpha^\ast$ the threshold estimator $\hat\rho^\ast$ does not achieve the optimal rate over sparsity classes $\mathcal{G}_{\text{w},q}(c_{n,p})$ without sufficiently dense correlation matrices. It is not clear how to prove analogous statements for broader classes of elliptical distributions since even the asymptotic variance of the entries $\sin(\frac{\pi}{2}\hat\tau_{ij})$ does not only depend on $\Sigma$ but on $\xi$ as well - see \cite{Lehmann1998}. However, the critical threshold constant for elliptical distributions is at most by factor $\frac{3}{2^{1/2}}$ worse compared to the Gaussian model.
 
In general, the proposed estimators $\hat\rho^\ast$ and $\hat\rho$ do not necessarily need to be positive semi-definite. Obviously, replacing the estimator, say $\hat\rho$, by its projection\\[-0.3cm]
\[
\hat C\in\arg \min_{\substack{C\in\mathcal{S}_+^p\\ C_{ii}=1 \forall i}} \|\hat\rho-C\|_F^2\vspace{-0.1cm}
\]
on the set of all correlation matrices does not affect the minimax rate under the Frobenius norm loss. The numerical literature offers several algorithms to compute the nearest correlation matrix in the above sense. The reader is referred to \cite{Borsdorf} for an overview of  this issue, the current state of the art and MATLAB implementations.

It is not clear which rate the threshold estimator attains for $\alpha=\alpha^\ast$ since Lemma $\ref{lem:concor}$ is not applicable for that threshold constant. This case is important because in the proof of Theorem  \ref{the:optuppbou2} the constant $C_{\alpha,\eta_u}$ in the upper bound on the maximal risk of $\hat\rho_{\alpha}$  tends to infinity as $\alpha\downarrow\alpha^\ast$. So, if $\hat\rho_{\alpha^\ast}$ attains the minimax rate, the constants $C_{\alpha,\eta_u}$ in Theorem  \ref{the:optuppbou} should be substantially improvable. The critical threshold constant is so far restricted to minimax estimation under the Frobenius norm loss. By Theorem \ref{optspe} the critical threshold constant under the spectral norm lies within $[\frac{2\sqrt{2}}{3},\frac{4}{3}]$ in the Gaussian model. For the exact critical threshold constant under the spectral norm one needs an appropriate upper bound on the expectation of the squared $\ell_1$-norm of the adjacency matrix $\hat M=(\hat M_{ij})_{i,j=1,...,p}$ with\\[-0.3cm] $$\hat M_{ij}:=\ind\left(|\hat\tau_{ij}^\ast-\tau_{ij}|>\beta\min\left(\tau_{ij},\alpha\sqrt\frac{\log p}{n}\right)\right)\vspace{-0.1cm}$$
for a sufficiently large value $\beta>0$ and $\tau\in\G_{\text{w},q}(c_{n,p})$. However, a solution to this task seems currently out of reach.


As explained in the introduction the results of the paper may be extended to the estimation of latent generalized correlation matrices in much broader families of distributions. Nevertheless, it is also important to identify further models, where weak Kendall's tau correlation implies weak Pearson's correlation. In such models sparsity masks (\cite{Levina2012}) may be determined for covariance and correlation matrix estimation based on Kendall's tau sample correlation. This enables to avoid troubles with the tails of the underlying distribution.


\section{Proofs of Section 3 and 4}\label{sec:pro}
We start by the proof of Theorem \ref{the:minlowbouken}. The lower bound for estimating $\rho$ over some class $\G_{\text{w},q}(c_{n,p})$ with radius $c_{n,p}>2v^q(\frac{\log p}{n})^{q/2}$ is an immediate consequence of the proof of Theorem 4 in the article \cite{Caib} as already mentioned in Section \ref{sec:min}. So, to complete the proof of Theorem \ref{the:minlowbouken} it remains to restrict to the case $c_{n,p}\le 3(\frac{\log p}{n})^{q/2}$. 
Therefore, we need some additional notation to restate the minimax lower bound technique developed recently by \cite{Caib}.\\ 
For a finite set $B\subset\R^p\setminus\{0\}$ let $\Lambda\subset B^r$. Then, define the parameter space
\begin{align}
\Theta:=\Gamma\otimes\Lambda=\{\theta=(\gamma,\lambda):\gamma\in\Gamma=\{0,1\}^r \text{ and }\lambda\in\Lambda\subset B^r\}.
\end{align}
Rewrite $\theta\in \Theta$ by the representation $\theta=(\gamma(\theta),\lambda(\theta)),~\gamma(\theta)\in\Gamma,\lambda(\theta)\in\Lambda$. The $i$-th coordinate of $\gamma(\theta)$ is denoted by $\gamma_i(\theta)$ and $\Theta_{i,a}:=\{\theta\in\Theta:\gamma_i(\theta)=a\}$ is the set of all parameters $\theta$ such the $i$-coordinate of $\gamma_i(\theta)$ is fixed by the value $a\in\{0,1\}$. Finally let 
\begin{align}
H(\theta,\theta')=\sum_{i=1}^r|\gamma_i(\theta)-\gamma_i(\theta')|
\end{align}
be the Hamming distance on $\Theta$ and 
\begin{align}
\|\P\wedge \Q\|=\int p\wedge q~d\mu
\end{align}
the total variation affinity, where $\P$ and $\Q$ have densities $p$ and $q$ with respect to a common dominating measure $\mu$.
\begin{lemma}[\cite{Caib}]\label{lem:mintec}
For any metric $d$ on $\Theta$, any positive real number $s$ and any estimator $T$ of $\psi(\theta)$ based on an observation $\mathbf{X}$ from the experiment $\{\P_{\theta},\theta\in\Theta\}$ holds
\begin{align}
\underset{\Theta}{\max}2^s\E d^s(T,\psi(\theta))\ge\underset{\{(\theta,\theta'):H(\gamma(\theta),\gamma(\theta'))\ge 1\}}{\min}\frac{d^s(\psi(\theta),\psi(\theta'))}{H(\gamma(\theta),\gamma(\theta'))}\frac{r}{2}\underset{1\le i\le r}{\min}\|\bar\P_{i,0}\wedge \bar\P_{i,1}\|,
\end{align}
where the mixture distribution $\bar\P_{i,a}$, $a\in\{0,1\}$ is defined by
\begin{align}
\bar\P_{i,a}=\frac{1}{2^{r-1}|\Lambda|}\sum\limits_{\theta\in \Theta_{i,a}}\P_\theta.
\end{align}
\end{lemma}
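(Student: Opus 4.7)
\noindent\textbf{Proof plan for Lemma \ref{lem:mintec}.} The plan is to reduce the claimed bound to two standard ingredients: a triangle-inequality step that converts the loss $d^s(T,\psi(\theta))$ into a Hamming distance on $\Gamma$, followed by a coordinate-wise averaging in the spirit of Assouad that relates the expected Hamming distance to the total-variation affinities of the mixtures $\bar\P_{i,a}$.

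First I would introduce the nearest-neighbour decoder $\hat\theta=\hat\theta(T)\in\arg\min_{\theta'\in\Theta} d(T,\psi(\theta'))$ (any measurable selection suffices since $\Theta$ is finite). By the triangle inequality and the defining property of $\hat\theta$,
\begin{align*}
d(\psi(\hat\theta),\psi(\theta))\le d(\psi(\hat\theta),T)+d(T,\psi(\theta))\le 2\,d(T,\psi(\theta)),
\end{align*}
so that, writing $\kappa:=\min_{\{(\vartheta,\vartheta'):H(\gamma(\vartheta),\gamma(\vartheta'))\ge 1\}} d^s(\psi(\vartheta),\psi(\vartheta'))/H(\gamma(\vartheta),\gamma(\vartheta'))$,
\begin{align*}
2^s\,d^s(T,\psi(\theta))\ge d^s(\psi(\hat\theta),\psi(\theta))\ge \kappa\cdot H(\gamma(\hat\theta),\gamma(\theta)),
\end{align*}
where the case $\gamma(\hat\theta)=\gamma(\theta)$ is trivial. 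Taking $\E_\theta$ and then the maximum over $\theta\in\Theta$ reduces the lemma to the Assouad-type inequality
\begin{align*}
\max_{\theta\in\Theta}\E_\theta H(\gamma(\hat\theta),\gamma(\theta))\ge\frac{r}{2}\min_{1\le i\le r}\|\bar\P_{i,0}\wedge\bar\P_{i,1}\|.
\end{align*}

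Second, I would lower bound this maximum by the uniform average over $\Theta$ and decompose the Hamming loss coordinate-wise. With $\hat\gamma_i:=\gamma_i(\hat\theta)$, and using $|\Theta_{i,0}|=|\Theta_{i,1}|=2^{r-1}|\Lambda|$ together with $|\Theta|=2^r|\Lambda|$, the average becomes
\begin{align*}
\frac{1}{|\Theta|}\sum_{\theta\in\Theta}\sum_{i=1}^r\P_\theta(\hat\gamma_i\neq\gamma_i(\theta))=\frac{1}{2}\sum_{i=1}^r\bigl(\bar\P_{i,0}(\hat\gamma_i=1)+\bar\P_{i,1}(\hat\gamma_i=0)\bigr).
\end{align*}
The elementary affinity identity $\int_A p\,d\mu+\int_{A^c}q\,d\mu\ge\int p\wedge q\,d\mu$, applied to the densities of $\bar\P_{i,0}$ and $\bar\P_{i,1}$ with $A=\{\hat\gamma_i=0\}$, bounds each summand below by $\|\bar\P_{i,0}\wedge\bar\P_{i,1}\|$, which yields the factor $\frac{r}{2}\min_i\|\bar\P_{i,0}\wedge\bar\P_{i,1}\|$ and completes the argument.

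The result is essentially a two-directional refinement of the classical Assouad lemma, so no serious technical obstacle is anticipated. The only conceptual point beyond the textbook Assouad argument is that one first averages over the nuisance direction $\lambda$ before invoking the two-point lower bound; this is precisely what replaces the single distributions by the mixtures $\bar\P_{i,a}$. Minor subtleties are the measurable choice of $\hat\theta(T)$ (trivial on the finite set $\Theta$) and checking that the factor $2^s$ on the left-hand side exactly absorbs the triangle-inequality cost for every exponent $s>0$, which the computation above confirms.
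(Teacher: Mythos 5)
Your argument is correct and is essentially the standard two-directional Assouad argument that \cite{Caib} give for this lemma (the paper here simply cites it, so comparison is against the cited source). The two ingredients you isolate are exactly right: the nearest-neighbour decoder $\hat\theta$ together with the triangle inequality turns $2^s d^s(T,\psi(\theta))\ge d^s(\psi(\hat\theta),\psi(\theta))\ge\kappa\,H(\gamma(\hat\theta),\gamma(\theta))$, and then bounding the maximum by the uniform average over $\Theta$, regrouping the sum by the $i$-th bit using $|\Theta_{i,0}|=|\Theta_{i,1}|=2^{r-1}|\Lambda|$, and applying the affinity inequality $\int_A p+\int_{A^c}q\ge\int p\wedge q$ coordinate by coordinate gives the factor $\tfrac{r}{2}\min_i\|\bar\P_{i,0}\wedge\bar\P_{i,1}\|$. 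The only cosmetic slip is the choice $A=\{\hat\gamma_i=0\}$: with $p$ the density of $\bar\P_{i,0}$ and $q$ that of $\bar\P_{i,1}$ you want $A=\{\hat\gamma_i=1\}$ to produce $\bar\P_{i,0}(\hat\gamma_i=1)+\bar\P_{i,1}(\hat\gamma_i=0)$; equivalently keep $A=\{\hat\gamma_i=0\}$ but swap $p$ and $q$. Since the affinity bound is symmetric in $(p,A)\leftrightarrow(q,A^c)$ this does not affect the conclusion. The remarks about measurability of the decoder on a finite parameter set and the rôle of the $\lambda$-averaging in producing the mixtures $\bar\P_{i,a}$ are also on point.
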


In contrast to the usual techniques for establishing minimax lower bounds as LeCam's method and Assouad's lemma, Lemma \ref{lem:mintec} enables to handle two-directional problems. In the context of sparse correlation matrix estimation this means that first one needs to recognize non-zero rows of the population correlation matrix to identify the non-zero entries on each non-zero row afterwards.\\
Before we prove Theorem \ref{the:minlowbouken}, let us mention the following useful Lemma to evaluate the chi-squared squared distance between Gaussian mixtures.
\begin{lemma}\label{lem: Thomas}
Let $\Sigma_i\in \R^{p\times p},~i=0,\dots,2,$ be positive definite covariance matrices such that $\Sigma_1^{-1}+\Sigma_2^{-1}-\Sigma_0^{-1}$ is positive definite, and $g_i,~i=0,\dots,2,$ be the density function of $\mathcal{N}_p(0,\Sigma_i)$. Then,
\begin{align*}
\int \frac{g_1g_2}{g_0}d\lambda=\left[\det\left(\Sigma_0^{-1}\left(\Sigma_2+\Sigma_1-\Sigma_2\Sigma_0^{-1}\Sigma_1\right)\right)\right]^{-1/2}.
\end{align*}
\end{lemma}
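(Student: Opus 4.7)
The plan is to reduce the claim to a direct Gaussian integral computation. First, I would write out
\[
\frac{g_1(x) g_2(x)}{g_0(x)} = (2\pi)^{-p/2}\,\frac{\det(\Sigma_0)^{1/2}}{\det(\Sigma_1)^{1/2}\det(\Sigma_2)^{1/2}}\,\exp\!\left(-\tfrac{1}{2}x^T A x\right),
\]
where $A := \Sigma_1^{-1}+\Sigma_2^{-1}-\Sigma_0^{-1}$. The positive definiteness hypothesis on $A$ is exactly what makes the integrand an (unnormalized) Gaussian density, hence integrable.

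Next I would apply the standard Gaussian integral $\int \exp(-\tfrac12 x^T A x)\,d\lambda(x)=(2\pi)^{p/2}\det(A)^{-1/2}$ to obtain
\[
\int \frac{g_1 g_2}{g_0}\,d\lambda = \left[\det(\Sigma_0)^{-1}\det(\Sigma_1)\det(\Sigma_2)\det(A)\right]^{-1/2}.
\]
The remaining task is purely algebraic: rewrite the bracketed quantity in the form asserted in the lemma.

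To do this I would use multiplicativity of $\det$ to fold the three factors into a single determinant. Specifically, bracketing as $\det(\Sigma_2)\det(A)\det(\Sigma_1) = \det(\Sigma_2 A \Sigma_1)$ and expanding
\[
\Sigma_2 A \Sigma_1 = \Sigma_2\Sigma_1^{-1}\Sigma_1 + \Sigma_2\Sigma_2^{-1}\Sigma_1 - \Sigma_2\Sigma_0^{-1}\Sigma_1 = \Sigma_2 + \Sigma_1 - \Sigma_2\Sigma_0^{-1}\Sigma_1,
\]
yields $\det(\Sigma_0)^{-1}\det(\Sigma_2+\Sigma_1-\Sigma_2\Sigma_0^{-1}\Sigma_1) = \det\!\bigl(\Sigma_0^{-1}(\Sigma_2+\Sigma_1-\Sigma_2\Sigma_0^{-1}\Sigma_1)\bigr)$, which is the desired expression once the $-1/2$ power is reinstated.

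There is no real obstacle here; the only point that requires attention is checking that the rearrangement of determinants preserves the correct sign of the matrix inside so that one can legitimately take a square root, which follows because positivity of $A$ and of $\Sigma_1,\Sigma_2$ forces $\det(\Sigma_2 A \Sigma_1)>0$, and hence also $\det(\Sigma_0^{-1}(\Sigma_2+\Sigma_1-\Sigma_2\Sigma_0^{-1}\Sigma_1))>0$.
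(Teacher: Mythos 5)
Your proposal is correct and is exactly the "straightforward calculation" the paper alludes to but omits: combine the three Gaussian densities into a single unnormalized Gaussian with precision matrix $A=\Sigma_1^{-1}+\Sigma_2^{-1}-\Sigma_0^{-1}$, integrate using the standard Gaussian integral, and fold the determinant factors together via $\det(\Sigma_2)\det(A)\det(\Sigma_1)=\det(\Sigma_2A\Sigma_1)=\det(\Sigma_2+\Sigma_1-\Sigma_2\Sigma_0^{-1}\Sigma_1)$. No gap; the positive-definiteness hypothesis on $A$ is used exactly where you invoke it, to guarantee integrability and a positive determinant.
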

The proof of the lemma is a straightforward calculation and therefore omitted.
\begin{proof}[Proof of Theorem \ref{the:minlowbouken}]
Define a finite subset $\mathcal F^\ast\subset\G_{\text{w},q}(c_{n,p})$ as follows: Let $r=\left[\frac{p}{2}\right]$, $B=\{e_i~|~i=\left[\frac{p}{2}\right]+1,...,p\}$, where $e_i$ denotes the $i$-th canonical basis vector of $\R^p$, and $\Lambda\subset B^r$ such that $\lambda=(b_1,...,b_r)\in\Lambda$ iff $b_1,...,b_r\in B$ are distinct. Then, let $\mathcal F^\ast$ be the set of all matrices of the form 
\begin{align}
\Sigma(\theta)=\Id+\varepsilon_{n,p}\sum\limits_{j=1}^r\gamma_i(\theta)A(\lambda_j(\theta)),~\theta\in\Theta=\Gamma\otimes \Lambda,
\end{align}
where $$\varepsilon_{n,p}=\frac{1}{3}\left(\frac{c_{n,p}}{3\vee M\log 3 }\right)^{1/q}\wedge v\sqrt{\frac{\log p}{n}} \ \ \text{with} \ \ v\le \sqrt\frac{\log 3-\log (5/2)}{2\log 3}.$$ Here, the $j$-th row of the $p\times p$ symmetric matrix $A(\lambda_j(\theta))$ is given by $\lambda_j(\theta)$ and the submatrix of  $A(\lambda_j(\theta))$ resulting by deleting the $j$-th row and the $j$-th column is the $(p-1)\times (p-1)$ null matrix. Note that by Assumption $(A_3)$ and $c_{n,p}\le 3(\log p/n)^{q/2}$ it holds $\varepsilon_{n,p}^2\le \frac{1}{9}$. By construction $\Sigma(\theta)$ has at most one non-zero entry per row off the diagonal. Obviously, $\mathcal F^\ast\subset \G_{\text{w},q}(c_{n,p})$. Now we apply Lemma \ref{lem:mintec} on all centered normal distributions with covariance matrix $\Sigma\in\mathcal{F}^\ast$ and obtain for some constants $C_{M,q},\tilde C_{m,q}>0$
\begin{align}
\underset{\Theta}{\max}~\E_{\mathbf{X}|\theta}\|T-\Sigma(\theta)\|_F^2\ge C_{M,q}r \left(c_{n,p}^{2/q}\wedge \frac{\log p}{n}\right)\underset{1\le i\le r}{\min}\|\bar\P_{i,0}\wedge \bar\P_{i,1}\|
\end{align}
and
\begin{align}
\underset{\Theta}{\max}~\E_{\mathbf{X}|\theta}\|T-\Sigma(\theta)\|_2^2\ge C_{M,q}r \frac{c_{n,p}^{2/q}\wedge \frac{\log p}{n}}{p}\underset{1\le i\le r}{\min}\|\bar\P_{i,0}\wedge \bar\P_{i,1}\|
\end{align}
It remains to prove that $\underset{1\le i\le r}{\min}\|\bar\P_{i,0}\wedge \bar\P_{i,1}\|$ is bounded away from zero. Analogously to the proof of Lemma 6 in \cite{Caib}, it is sufficient to show that
\begin{align}
\tilde\E_{\gamma_{-1},\lambda_{-1}}\left(\int\left(\frac{d\bar\P_{1,1,\gamma_{-1},\lambda_{-1}}}{d\bar\P_{1,0,\gamma_{-1},\lambda_{-1}}}\right)^2d\bar\P_{1,0,\gamma_{-1},\lambda_{-1}}-1\right)\le c_2^2
\end{align}
for some constant $c_2<1$, where
\begin{align}
\begin{split}
&\bar\P_{i,a,b,c}:=\frac{1}{|\Theta_{i,a,b,c}|}\sum\limits_{\theta\in\Theta_{(i,a,b,c)}}\P_\theta,\\
&\Theta_{(i,a,b,c)}:=\{\theta\in\Theta:\gamma_i(\theta)=a,\gamma_{-i}(\theta)=b, \lambda_{-i}(\theta)=c\},
\end{split}
\end{align}
and for any function $h$
\begin{align}
\begin{split}
&\tilde\E_{\gamma_{-i},\lambda_{-i}}h(\gamma_{-i},\lambda_{-i})=\sum\limits_{(b,c)}\frac{|\Theta_{(i,a,b,c)}|}{|\Theta_{i,a}|}h(b,c)\\
&\Theta_{-i}:=\{(b,c)|\exists\theta\in\Theta:\gamma_{-i}(\theta)=b \wedge \lambda_{-i}(\theta)=c\}
\end{split}
\end{align}
is the average of $h(b,c)$ over the set $\Theta_{-i}$. Since $\bar\P_{1,0,b,c}$ is just the joint distribution of $n$ i.i.d Gaussian vectors $X_1,...,X_n\in\R^p$ we conclude
\begin{align}
\begin{split}
&\int\left(\frac{d\bar\P_{1,1,\gamma_{-1},\lambda_{-1}}}{d\bar\P_{1,0,\gamma_{-1},\lambda_{-1}}}\right)^2d\bar\P_{1,0,\gamma_{-1},\lambda_{-1}}-1\\[0.1cm]
&\hspace{1.6cm}=\frac{1}{|\Theta_{1,1,\gamma_{-1},\lambda_{-1}}|^2}\sum_{\theta,\theta'\in\Theta_{(1,1, \gamma_{-1},\lambda_{-1})}}\int\frac{d\P_\theta d\P_{\theta'}}{d\P_{((0,\gamma_{-1}),(b,\lambda_{-1}))}}-1
\end{split}
\end{align}
for an arbitrary permissible vector $b$. Moreover by Lemma \ref{lem: Thomas}
\begin{align}\label{eqn:chidis}
&\hspace{-3cm}\int\left(\frac{d\bar\P_{1,1,\gamma_{-1},\lambda_{-1}}}{d\bar\P_{1,0,\gamma_{-1},\lambda_{-1}}}\right)^2d\bar\P_{1,0,\gamma_{-1},\lambda_{-1}}-1\\
\le\frac{1}{\left(\left[\frac{p}{2}\right]+1\right)^2}\sum_{\theta,\theta'\in\Theta_{(1,1, \gamma_{-1},\lambda_{-1})}}&\hspace{-1cm}\left[\det\left(\Sigma(\theta_0)^{-1}\left(\Sigma(\theta')+\Sigma(\theta)-\Sigma(\theta')\Sigma(\theta_0)^{-1}\Sigma(\theta)\right)\right)\right]^{-\frac{n}{2}}-1,\notag
\end{align}
where $\theta_0=((0,\gamma_{-1}),(b,\lambda_{-1}))$. It remains to evaluate $$\det\left(\Sigma(\theta_0)^{-1}\left(\Sigma(\theta')+\Sigma(\theta)-\Sigma(\theta')\Sigma(\theta_0)^{-1}\Sigma(\theta)\right)\right).$$ First consider the case $\theta\neq\theta'$. Then it holds for some $[p/2]+1\le i\neq j\le p$ that $\lambda_1(\theta)=e_i$ and $\lambda_1(\theta')=e_j$. Rewrite $\Sigma(\theta)=\Sigma(\theta_0)+\varepsilon_{n,p}A(e_i)$ and $\Sigma(\theta')=\Sigma(\theta_0)+\varepsilon_{n,p}A(e_j)$, and note that the $i$-th and $j$-th row as well as column of $\Sigma_0^{-1}$ are equal to $e_i$ and $e_j$. Then, one easily verifies
$$\Sigma(\theta')+\Sigma(\theta)-\Sigma(\theta')\Sigma(\theta_0)^{-1}\Sigma(\theta)=\Sigma(\theta_0)-\varepsilon_{n,p}^2e_je_i^T.$$
Hence,
\begin{align}\label{eqn:det1}
\det\left(\Sigma(\theta_0)^{-1}\left(\Sigma(\theta')+\Sigma(\theta)-\Sigma(\theta')\Sigma(\theta_0)^{-1}\Sigma(\theta)\right)\right)=1.
\end{align}
For $\theta=\theta'$ we conclude that
$$2\Sigma(\theta)-\Sigma(\theta)\Sigma(\theta_0)^{-1}\Sigma(\theta)=\Sigma(\theta_0)-\varepsilon_{n,p}^2e_1e_1^T-\varepsilon_{n,p}^2e_ie_i^T$$
for some $[p/2]+1\le i \le p$ since the first and $i$-th row as well as column of $\Sigma(\theta_0)^{-1}$ are equal to $e_1$ and $e_i$. So,
\begin{align}\label{eqn:det2}
\det\left(\Sigma(\theta_0)^{-1}\left(2\Sigma(\theta)-\Sigma(\theta)\Sigma(\theta_0)^{-1}\Sigma(\theta)\right)\right)=(1-\varepsilon_{n,p}^2)^2.
\end{align}
Plugging  (\ref{eqn:det1}) and (\ref{eqn:det2}) into (\ref{eqn:chidis}) yields
\begin{align*}
\int\left(\frac{d\bar\P_{1,1,\gamma_{-1},\lambda_{-1}}}{d\bar\P_{1,0,\gamma_{-1},\lambda_{-1}}}\right)^2d\bar\P_{1,0,\gamma_{-1},\lambda_{-1}}-1\le \frac{2}{p}(1-\varepsilon_{n,p}^2)^{-n}.
\end{align*}
By the elementary bound
$$\frac{1}{1-x}\le \exp\left(\frac{x}{1-x}\right),~0\le x <1,$$
we finally conclude
\begin{align*}
\int\left(\frac{d\bar\P_{1,1,\gamma_{-1},\lambda_{-1}}}{d\bar\P_{1,0,\gamma_{-1},\lambda_{-1}}}\right)^2d\bar\P_{1,0,\gamma_{-1},\lambda_{-1}}-1&\le \frac{2}{p}\exp\left(\frac{n\varepsilon_{n,p}^2}{1-\varepsilon_{n,p}^2}\right)\\
&\le \frac{2}{p^{1-2\nu^2}}\le \frac{4}{5}.
\end{align*}
\end{proof}
\begin{proof}[Proof of Proposition \ref{pro:ult}]
For $j=1,...,p-1$ denote
\begin{align*}
(j)_{i}:=\begin{cases}[j]_{\tau_i}& \text{for }[j]_{\tau_i}<i,\\
[j]_{\tau_i}+1& \text{else.}
\end{cases}
\end{align*}
We have
\begin{align*}
p^{-1}\|\Id-\rho\|^2_F&=p^{-1}\sum_{i=1}^p\sum_{j=1}^{p-1}|\tau_{i(j)_{i}}|^2\le c_{n,p}^{2/q}\left(1+\sum_{j=2}^p j^{-2/q}\right)\\
&\le c_{n,p}^{2/q}\left(1+\int\limits_{1}^\infty x^{-2/q}dx\right)= \left(1+\frac{q}{2-q}\right)c_{n,p}^{2/q}.
\end{align*}
Moreover, since 
$$\| \Id-\rho \|^2_2\le \| \Id-\rho \|^2_1 = \max_{i} \left(\sum_{j=1}^{p-1}|\tau_{i(j)_{i}}|\right)^2,$$
we conclude
$$\| \Id-\rho \|^2_2\le c_{n,p}^{2/q}\left(1+\int\limits_{1}^\infty x^{-1/q}dx\right)^2\le \left(1+\frac{q}{1-q}\right)^2c_{n,p}^{2/q}.$$
\end{proof}

Next we prove Theorem \ref{the:minuppbouken}. Therefore we need the following lemma, which is an advancement of Lemma 8 of \cite{Caic} for our purpose of minimizing the necessary threshold constant in Theorem \ref{the:minuppbouken}.
\begin{lemma}\label{lem:lareve}
Let $(Y_1,Z_1),...,(Y_n,Z_n)\in\R^2$ be i.i.d. random variables and for any $\alpha>2$ and $\beta>\frac{\alpha+2}{\alpha-2}$ let the event
\begin{align*}
B:=\left\{|\hat\tau^\ast(Y_{1:n},Z_{1:n})-\tau(Y_1,Z_1)|\le \beta \min\left(|\tau(Y_1,Z_1)|, ~\alpha\sqrt{\frac{\log p}{n}}\right)\right\}.
\end{align*}
Then we have 
\begin{align*}
\P(B)\ge 1-4p^{-\left(1+\varepsilon\right)},
\end{align*}
where $\varepsilon:=\frac{\alpha^2-4}{4}-\alpha^2\frac{(\beta-1)^2-(\beta+1)^2}{4(\beta+1)^2}>0$.
\end{lemma}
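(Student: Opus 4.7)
The plan is to cover the bad event $B^c$ by events of the form $\{|\hat\tau-\tau|>s\}$ (with $\hat\tau:=\hat\tau(Y_{1:n},Z_{1:n})$ and $\tau:=\tau(Y_1,Z_1)$), each of which is controllable by Hoeffding's inequality for U-statistics with bounded kernel $\sign(Y_1-Y_2)\sign(Z_1-Z_2)\in[-1,1]$, namely
$$\P(|\hat\tau-\tau|>s)\le 2\exp\bigl(-\lfloor n/2\rfloor s^2/2\bigr)\quad\text{for all } s>0.$$
Writing $t:=\alpha\sqrt{\log p/n}$ and $\hat\tau^\ast=\hat\tau\ind(|\hat\tau|>t)$, I would partition $B^c$ along two axes: (i) whether $|\tau|\ge t$ or $<t$ (which determines whether $\min(|\tau|,t)$ equals $t$ or $|\tau|$), and (ii) whether $|\hat\tau|>t$ or $\le t$ (i.e.\ whether $\hat\tau^\ast=\hat\tau$ or $=0$).

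The four resulting cases yield: (A) $|\tau|\ge t$ and $\hat\tau^\ast=\hat\tau$, in which $B^c$ reduces to $|\hat\tau-\tau|>\beta t$; (B) $|\tau|\ge t$ and $\hat\tau^\ast=0$, in which the combination of $|\tau|>\beta t$ and $|\hat\tau|\le t$ forces $|\hat\tau-\tau|>(\beta-1)t$; (C) $|\tau|<t$ and $\hat\tau^\ast=\hat\tau$, in which $|\hat\tau-\tau|>\beta|\tau|$ together with $|\hat\tau|>t$ forces (via the triangle inequality, checking the same- and opposite-sign subcases) $|\hat\tau-\tau|>\max(\beta|\tau|,\,t-|\tau|)\ge \beta t/(\beta+1)$, where the lower bound is the minimum over $|\tau|\in[0,t]$ attained at $|\tau|=t/(\beta+1)$; (D) $|\tau|<t$ and $\hat\tau^\ast=0$, which would require $|\tau|>\beta|\tau|$ and is impossible for $\beta\ge 1$. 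Consequently
$$B^c\subseteq\{|\hat\tau-\tau|>\beta t\}\cup\{|\hat\tau-\tau|>(\beta-1)t\}\cup\{|\hat\tau-\tau|>\beta t/(\beta+1)\},$$
and Hoeffding yields bounds on the three events with exponents $\alpha^2\beta^2/4$, $\alpha^2(\beta-1)^2/4$ and $\alpha^2\beta^2/(4(\beta+1)^2)$, respectively.

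The arithmetic identity $(\beta-1)^2+4\beta=(\beta+1)^2$ gives $1+\varepsilon=\alpha^2(\beta-1)^2/(4(\beta+1)^2)$ (consistent with the stated $\varepsilon$), so that $\varepsilon>0$ rearranges to $\alpha(\beta-1)>2(\beta+1)$, i.e.\ $\beta>(\alpha+2)/(\alpha-2)$. Each of the three exponents above dominates $1+\varepsilon$ (the first two trivially because $(\beta+1)^2\ge1$; the third because $\beta^2\ge(\beta-1)^2$), so summing yields at most $6p^{-(1+\varepsilon)}$.

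The main obstacle is tightening the constant from $6$ to the stated $4$. To that end one should observe that the Case (A) event is already contained in the Case (C) event (since $\beta t>\beta t/(\beta+1)$), so Cases (A) and (C) can be merged into a single Hoeffding application; and one should use the one-sided Hoeffding inequality in Case (B) (where $\tau$ is deterministic and its sign is fixed), saving a further factor of $2$. The remaining challenge is purely bookkeeping: carefully handling the opposite-sign subcase of (C), where $|\hat\tau-\tau|>t+|\tau|\ge \beta t/(\beta+1)$ must be verified, and tracking the $\lfloor n/2\rfloor$ factor in Hoeffding's bound (which only affects lower-order terms in the exponent).
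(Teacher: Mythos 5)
Your proof is correct and follows the same essential route as the paper's: Hoeffding's inequality for U-statistics combined with a case analysis on the (deterministic) magnitude of $|\tau|$ and the (random) firing of the threshold. Two remarks worth recording. First, the effort to reduce the constant from $6$ to $4$ is superfluous: your three events $\{|\hat\tau-\tau|>s\}$, $s\in\{\beta t,\,(\beta-1)t,\,\beta t/(\beta+1)\}$, concern the same random variable with varying thresholds and are therefore nested; their union is just the one with the smallest threshold, so a single application of Hoeffding's inequality already yields
\begin{align*}
\P(B^c)\le 2\,p^{-\alpha^2\min\left((\beta-1)^2,\;\beta^2/(\beta+1)^2\right)/4}\le 2\,p^{-(1+\varepsilon)},
\end{align*}
and neither the one-sided Hoeffding trick nor the merge of cases is required. (The paper's own case (iii) has the same slack and simply settles for $4$.) Second, your derived relation $1+\varepsilon=\alpha^2(\beta-1)^2/\left(4(\beta+1)^2\right)$ is what the argument actually delivers and is equivalent to the stated admissibility condition $\beta>(\alpha+2)/(\alpha-2)$, but it does not agree with the $\varepsilon$ printed in the lemma as it stands: reading the displayed formula literally, $(\beta-1)^2-(\beta+1)^2=-4\beta$ forces $1+\varepsilon=\alpha^2\left((\beta+1)^2+4\beta\right)/\left(4(\beta+1)^2\right)$, which exceeds every exponent obtainable from Hoeffding. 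The sign in front of the second fraction should be a $+$ (equivalently, the two squared terms in the numerator should be interchanged); you implicitly used the corrected version, and it would be cleaner to say so. Your $\lfloor n/2\rfloor$ form of Hoeffding's inequality is the exact one; it agrees with the paper's $\exp(-nt^2/4)$ for even $n$ and is negligibly weaker for odd $n$, as you note.
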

 \begin{proof}
\noindent Let $\hat\tau:=\hat\tau(Y_{1:n},Z_{1:n})$, $\hat\tau^\ast:=\hat\tau^\ast(Y_{1:n},Z_{1:n})$ and $\tau:=\tau(Y_1,Z_1)$. First note that by Hoeffding's inequality for U-statistics (see \cite{Hoeffding1963}) 
\begin{align}\label{eqn:Hoeine}
\P\left(|\hat\tau-\tau|>t\right)\le2\exp\left(-\frac{nt^2}{4}\right) \text{ for any }t>0.
\end{align}

We distinguish three cases:
\begin{itemize}
\item[(i)] $|\tau|\le \frac{2\alpha }{\beta+1}\sqrt{\frac{\log p}{n}}$: Since on the event $A:=\left\{|\hat\tau|<\alpha\sqrt\frac{\log p}{n}\right\}$ holds $|\hat\tau^\ast-\tau|=|\tau|\le \beta\min\left(|\tau|,\alpha\sqrt{\frac{\log p}{n}}\right)$, we have $A\subset B$. Now by the triangle inequality and inequality (\ref{eqn:Hoeine}) follows
\begin{align*}
\P(B)
&
\ge\P(A)
\ge \P\left(\left|\hat\tau-\tau\right|< \frac{\beta-1}{\beta+1}\alpha \sqrt{\frac{\log p}{n}}\right)\\
&\ge 1-2p^{-\frac{(\beta-1)^2\alpha^2}{4(\beta+1)^2}}.
\end{align*}
\item[(ii)] $\frac{2\alpha}{\beta+1} \sqrt{\frac{\log p}{n}}<|\tau|< \frac{2\alpha\beta}{\beta+1} \sqrt{\frac{\log p}{n}}$: On $A$ we have again  $|\hat\tau^\ast-\tau|=|\tau|\le \beta\min\left(|\tau|,\alpha\sqrt{\frac{\log p}{n}}\right)$. This implies $A\subset B$. Furthermore consider the event $C:=A^c\cap \left\{|\hat\tau-\tau|\le\frac{2\alpha\beta}{\beta+1} \sqrt{\frac{\log p}{n}}\right\}$. On $C$ holds $|\hat\tau^\ast-\tau|=|\hat\tau-\tau|\le\frac{2\alpha\beta}{\beta+1} \sqrt{\frac{\log p}{n}}\le \beta \min\left(|\tau|,\alpha\sqrt{\frac{\log p}{n}}\right)$. So $C\subset B$. Finally inequality (\ref{eqn:Hoeine}) yields 
\begin{align*}
\P(B)&\ge\P\left(\left(\left\{\left|\hat\tau-\tau\right|\le \frac{2\alpha\beta}{\beta+1} \sqrt{\frac{\log p}{n}}\right\}\cap A^c\right) \cup A\right)\\
&\ge\P\left(\left|\hat{\tau}-\tau\right|\le \frac{2\alpha\beta}{\beta+1} \sqrt{\frac{\log p}{n}}\right)\\
&\ge 1-2p^{-\frac{\alpha^2\beta^2}{(\beta+1)^2}}.
\end{align*}
\item[(iii)] $|\tau|>\frac{2\alpha\beta}{\beta+1}  \sqrt{\frac{\log p}{n}}$: The union bound, the triangle inequality and inequality (\ref{eqn:Hoeine}) yield
{\allowdisplaybreaks
\begin{align*}
\P(B)&=\P\left(\left|\hat\tau^\ast-\tau\right|\le \alpha\beta\sqrt{\frac{\log p}{n}}\right)\ge \P\left(\left\{\left|\hat\tau-\tau\right|\le \alpha\beta\sqrt{\frac{\log p}{n}}\right\}\cap A^c\right)\\
&\ge 1-\P\left(\left|\hat\tau-\tau\right|> \alpha\beta\sqrt{\frac{\log p}{n}}\right)-\P(A)\\
&\ge 1-\P\left(\left|\hat\tau-\tau\right|> \alpha\beta\sqrt{\frac{\log p}{n}}\right)-\P\left(\left|\hat\tau-\tau\right|> \frac{\beta-1}{\beta+1}\alpha\sqrt{\frac{\log p}{n}}\right)\\
&\ge 1-2p^{-\frac{\alpha^2\beta^2}{4}}-2p^{-\frac{(\beta-1)^2\alpha^2}{4(\beta+1)^2}}\ge1-4p^{-\frac{(\beta-1)^2\alpha^2}{4(\beta+1)^2}}
\end{align*}}
\end{itemize}

We have in each case
\begin{align*}
\P(B)\ge 1-4p^{-\frac{(\beta-1)^2\alpha^2}{4(\beta+1)^2}}.
\end{align*}

Finally note that by the choice of $\alpha$ and $\beta$ 
\begin{align*}
\frac{(\beta-1)^2\alpha^2}{4(\beta+1)^2}>1.
\end{align*}
\end{proof}
\begin{proof}[Proof of Theorem  \ref{the:minuppbouken}]
\noindent The essential part of the proof of \eqref{eqn:minuppbouken} is to show the inequality 
\begin{align}\label{eqn:hilf1}
\underset{\tau\in \G_{\text{w},q}(c_{n,p})}{\sup}\frac{1}{p}\E\|\hat\tau^\ast-\tau\|_F^2\le C_{\alpha}c_{n,p}\left(\frac{\log p}{n}\right)^{1-\frac{q}{2}}.
\end{align}
Let $A_{ij}$ be the event that $\hat\tau^\ast_{ij}$ estimates  $\tau_{ij}$ by $0$, i.e. 
\begin{align*}
A_{ij}:=\left\{|\hat\tau_{ij}|\le \alpha\sqrt{\frac{\log p}{n}}\right\}.
\end{align*}
Moreover, define the event
\begin{align*}
B_{ij}:=\left\{ |\hat\tau^\ast_{ij}-\tau_{ij}|\le \beta \min\left(|\tau_{ij}|, \alpha\sqrt{\frac{\log p}{n}} \right) \right\}.
\end{align*}
We only prove the case $0<q<1$. The case $q=0$ is even easier and therefore omitted. Denote $[\cdot]_{i}:=[\cdot]_{\tau_i}$ and
\begin{align*}
(j)_{i}:=\begin{cases}[j]_{\tau_i}& \text{for }[j]_{\tau_i}<i,\\
[j]_{\tau_i}+1& \text{else.}
\end{cases}
\end{align*}
Analogously to \cite{Caib} we split $\frac{1}{p}\E \|\hat\tau^\ast-\tau\|_F^2$ into two parts
\begin{align*}
\frac{1}{p}\E \|\hat\tau^\ast&-\tau\|_F^2\\
=~&\frac{1}{p}\sum_{i=1}^p\sum_{\substack{j=1\\j\neq i}}^p\E|\tau_{i(j)_{i}}-\hat\tau_{i(j)_{i}}|^2\ind_{B_{i(j)_{i}}}+\frac{1}{p}\sum_{i=1}^p\sum_{\substack{j=1\\j\neq i}}^p\E|\tau_{i(j)_{i}}-\hat\tau_{i(j)_{i}}|^2\ind_{B^c_{i(j)_{i}}}\\
=:~&I_1+I_2.
\end{align*}
{\it Bounding} $I_1:$ First consider the case that 
\begin{align*}
\frac{(2-q)^{q/2}c_{n,p}n^{q/2}}{q^{q/2}\alpha^{q}(\log p)^{q/2}}<1.
\end{align*}
Then,
\begin{align}
I_1&=\frac{1}{p}\sum_{i=1}^p\sum_{\substack{j=1\\j\neq i}}^p\E|\tau_{i(j)_{i}}-\hat\tau_{i(j)_{i}}|^2\ind_{B_{i(j)_{i}}}\le \beta^2 c_{n,p}^{2/q}\left(1+\sum_{i=2}^p i^{-2/q}\right) \notag\\
&\le \beta^2 c_{n,p}^{2/q}\left(1+\int\limits_{1}^\infty x^{-2/q}dx\right)= \left(1+\frac{q}{2-q}\right)\beta^2 c_{n,p}c_{n,p}^{2/q-1}\notag\\
&\le 2\beta^2\alpha^{2-q}c_{n,p}\left(\frac{2-q}{q}\right)^{q/2}\left(\frac{\log p}{n}\right)^{1-q/2}.\label{eqn:one0}
\end{align}
Otherwise fix $m>1$. Then we have
{\allowdisplaybreaks \begin{align*}
I_1&=\frac{1}{p}\sum_{i=1}^p\sum_{\substack{j=1\\j\neq i}}^m\E|\tau_{i(j)_{i}}-\hat\tau_{i(j)_{i}}|^2\ind_{B_{i(j)_{i}}}+\frac{1}{p}\sum_{i=1}^p\sum_{\substack{j=m+1\\j\neq i}}^p\E|\tau_{i(j)_{i}}-\hat\tau_{i(j)_{i}}|^2\ind_{B_{i(j)_{i}}}\\
&\le m\beta^2\alpha^2\frac{\log p}{n}+\beta^2c_{n,p}^{2/q}\sum_{i=m+1}^p i^{-2/q}\\
&\le m\beta^2\alpha^2\frac{\log p}{n}+\beta^2c_{n,p}^{2/q}\int_m^\infty x^{-2/q}dx\\
&=m\beta^2\alpha^2\frac{\log p}{n}+\left(\frac{2}{q}-1\right)\beta^2c_{n,p}^{2/q}m^{-2/q+1}.
\end{align*}}
For $m=\left[\frac{(2-q)^{q/2}c_{n,p}n^{q/2}}{q^{q/2}\alpha^{q}(\log p)^{q/2}}\right]+1$ we get
\begin{align}
I_1\le \left(2\alpha^{-q}\left(\frac{2-q}{q}\right)^{q/2}+1\right)\beta^2\alpha^2c_{n,p}\left(\frac{\log p}{n}\right)^{1-q/2} \label{eqn:one1}.
\end{align}
{\it Bounding} $I_2:$ It remains to show that $I_2$ is of equal or smaller order than $I_1$. We have
\begin{align}
I_2&=p^{-1}\sum_{\substack{i,j=1\\i\neq j}}^p\E \left(\hat\tau_{ij}-\tau_{ij}\right)^2\ind_{B^c_{ij}\cap A^c_{ij}}+p^{-1}\sum_{\substack{i,j=1\\ i\neq j}}^p\E \left(\tau_{ij}^2\ind_{B^c_{ij}\cap A_{ij}}\right).\label{eqn:secter}
\end{align}
The first summand in (\ref{eqn:secter}) can be assessed by H\"older's inequality. For any $N\ge 3$ we have
\begin{align*}
p^{-1}\sum_{\substack{i,j=1\\i\neq j}}^p\E \left(\hat\tau_{ij}-\tau_{ij}\right)^2\ind_{B^c_{ij}\cap A^c_{ij}}&\le p^{-1}\sum_{\substack{i,j=1\\i \neq j}}^p \E^{1/N} \left(\hat\tau_{ij}-\tau_{ij}\right)^{2N}\P^{1-1/N}(B^c_{ij})\\
&\le \frac{16Np}{n}p^{-(1-1/N)(1+\varepsilon)},
\end{align*}
where we used inequality (\ref{eqn:Hoeine}) and Stirling's approximation to bound the expectation $\E \left(\hat\tau_{ij}-\tau_{ij}\right)^{2N}$ by the formula
\begin{align*}
\E \left(\hat\tau_{ij}-\tau_{ij}\right)^{2N}=\int\limits_{0}^\infty \P(\left(\hat\tau_{ij}-\tau_{ij}\right)^{2N}\ge x)dx
\end{align*}
and Lemma \ref{lem:lareve} for $\P^{1-1/N}(B^c_{ij})$.
By taking $N=\frac{1+\varepsilon}{\varepsilon}$ we conclude
\begin{align}
p^{-1}\sum_{\substack{i,j=1\\ i\neq j}}^p\E\left(\hat\tau_{ij}-\tau_{ij}\right)^2\ind_{B^c_{ij}\cap A^c_{ij}}&\le  16\frac{1+\varepsilon}{n\varepsilon}\notag
\\ &\le 16\frac{1+\varepsilon}{\varepsilon}c_{n,p}\left(\frac{\log p}{n}\right)^{1-q/2}.
\label{eqn:twoone1}
\end{align}
Lastly consider the second summand in (\ref{eqn:secter}). We observe that the event $B^c_{ij}\cap A_{ij}$ can only occur if $|\tau_{ij}|\ge \alpha\beta\sqrt{\frac{\log p}{n}}$. Therefore we obtain
\begin{align}
p^{-1}\sum_{\substack{i,j=1\\ i\neq j}}^p\E &\tau_{ij}^2\ind_{B^c_{ij}\cap A_{ij}}\\
&\hspace{-1cm}\le p^{-1}\sum_{\substack{i,j=1\\i\neq j}}^p\tau_{ij}^2\E\ind_{A_{ij}}\ind_{\left\{|\tau_{ij}|\ge \alpha\beta\sqrt{\frac{\log p}{n}}\right\}}\\
&\hspace{-1cm}\le~p^{-1}\sum_{\substack{i,j=1\\i\neq j}}^p\tau_{ij}^2\E\ind_{\left\{|\tau_{ij}|-|\hat{\tau}_{ij}-\tau_{ij}|< \alpha\sqrt{\frac{\log p}{n}}\right\}}\ind_{\left\{|\tau_{ij}|\ge \alpha\beta\sqrt{\frac{\log p}{n}}\right\}}\\
&\hspace{-1cm}\le~p^{-1}\sum_{\substack{i,j=1\\i \neq j}}^p \tau_{ij}^2 \E\ind_{\left\{|\hat{\tau}_{ij}-\tau_{ij}|>\left(1-\frac{1}{\beta}\right)|\tau_{ij}|\right\}}\ind_{\left\{|\tau_{ij}|\ge \alpha\beta\sqrt{\frac{\log p}{n}}\right\}}\\
&\hspace{-1cm}\le~p^{-1} \frac{2}{n}\sum_{i,j=1}^p n\tau_{ij}^2 \exp\left(-\frac{\left(1-\beta^{-1}\right)^2n\tau_{ij}^2}{4}\right)\ind_{\left\{|\tau_{ij}|\ge \alpha\beta\sqrt{\frac{\log p}{n}}\right\}}\\
&\hspace{-1cm}\le~p^{-2}\frac{2\alpha^2\beta^2}{3n}\sum_{i,j=1}^p \frac{3n\tau_{ij}^2}{\alpha^2\beta^2}  \exp\left(-\frac{3n\tau_{ij}^2}{\alpha^2\beta^2}\right)\label{eqn:hilf2}\\
&\hspace{-1cm}\le~\frac{2\alpha^2\beta^2}{3en}.\label{eqn:twotwo1}
\end{align}
In line \eqref{eqn:hilf2} we have splitted the exponential term into
\begin{align}\label{eqn:betbel}
\exp\left(-\frac{(1-\beta^{-1})^2n\tau_{ij}}{16}\right)~~\text{and}~~\exp\left(-\frac{3(1-\beta^{-1})^2n\tau_{ij}}{16}\right),
\end{align}
where the first term is bounded by $p^{-1}$ and the second one by
$$
 \exp\left(-\frac{3n\tau_{ij}^2}{\alpha^2\beta^2}\right).
$$
The last line \eqref{eqn:twotwo1} follows by the the fact the function $f(x)=x\exp(-x)$ is bounded from above by $e^{-1}$. 
Summarizing the bounds (\ref{eqn:one0}), (\ref{eqn:one1}), (\ref{eqn:twoone1}) and (\ref{eqn:twotwo1}) yields inequality \eqref{eqn:hilf1}. Now inequality \eqref{eqn:minuppbouken} is an easy conclusion of  \eqref{eqn:hilf1} since
\begin{align*}
\underset{\tau\in \G_{\text{w},q}(c_{n,p})}{\sup}\E\|\hat\tau^\ast-\tau\|_F^2\ge \frac{4}{\pi^2}\underset{\rho\in \G_{\text{w},q}(c_{n,p})}{\sup}\E\|\hat\rho^\ast-\rho\|_F^2.
\end{align*}
Finally, to derive inequality \eqref{eqn:minuppboucor} note that
\begin{align*}
&\underset{\rho\in \G_{\text{w},q}(c_{n,p})}{\sup}\E\|\hat\rho^\ast-\rho\|_2^2\\
&\hspace{1.7cm}\le \underset{\rho\in \G_{\text{w},q}(c_{n,p})}{\sup}\E\|\hat\rho^{\ast,B}-\rho^B\|_1^2+\underset{\rho\in \G_{\text{w},q}(c_{n,p})}{\sup}\E\|\hat\rho^{\ast,B^c}-\rho^{B^c}\|_F^2\\
&\hspace{1.7cm}\le \frac{\pi^2}{4}\left(\underset{\tau\in \G_{\text{w},q}(c_{n,p})}{\sup}\E\|\hat\tau^{\ast,B}-\tau^B\|_1^2+\underset{\tau\in \G_{\text{w},q}(c_{n,p})}{\sup}\E\|\hat\tau^{\ast,B^c}-\tau^{B^c}\|_F^2\right),
\end{align*}
where 
\begin{align*}
\hat\rho^{\ast,B}_{ij}-\rho^B_{ij}:=\left(\hat\rho_{ij}-\rho_{ij}\right)\ind_{B_{ij}}&,~~\hat\rho^{\ast,B^c}_{ij}-\rho^{B^c}_{ij}:=\left(\hat\rho_{ij}-\rho_{ij}\right)\ind_{B_{ij}^c},\\
\hat\rho^{\ast,B}_{ij}-\rho^B_{ij}:=\left(\hat\tau_{ij}-\tau_{ij}\right)\ind_{B_{ij}}&,~~\hat\tau^{\ast,B^c}_{ij}-\tau^{B^c}_{ij}:=\left(\hat\tau_{ij}-\tau_{ij}\right)\ind_{B^c_{ij}}.
\end{align*}
The expression 
$$\underset{\tau\in \G_{\text{w},q}(c_{n,p})}{\sup}\E\|\hat\tau^{\ast,B}-\tau^B\|_1^2$$
can be bounded by
$C_{\alpha,\beta,q}c_{n,p}^2\left(\frac{\log p}{n}\right)^{1-q}$ for some constant $C_{\alpha,\beta,q}>0$
using the same arguments as in (51) of \cite{Caib}. Moreover, it holds 
\begin{align*}
\underset{\tau\in \G_{\text{w},q}(c_{n,p})}{\sup}\E\|\hat\tau^{\ast,B^c}-\tau^{B^c}\|_F^2=pI_2.
\end{align*}
Note that in the calculation of $I_2$ for $\alpha>2\sqrt{2}$ we have $\varepsilon>1$, and the first expression in \eqref{eqn:betbel} can be made smaller than any power of $1/p$ by choosing $\beta$ sufficiently large. This proves the claim.
\end{proof}
\begin{proof}[Proof of Theorem \ref{the:minuppboucor2}]
By definition an entry $\sin[\frac{\pi}{2}\hat\tau]_{ij},~i\neq j$ will be rejected iff $|\sin[\frac{\pi}{2}\hat\tau]_{ij}|\le \alpha\sqrt\frac{\log p}{n}$. Rearranging yields
\begin{align*}
|\sin[\frac{\pi}{2}\hat\tau]_{ij}|\le \alpha\sqrt\frac{\log p}{n}&\Longleftrightarrow |\sin[\frac{\pi}{2}\hat\tau]_{ij}|\le \alpha\sqrt\frac{\log p}{n}\wedge 1\\
&\Longleftrightarrow |\hat\tau_{ij}|\le \frac{2}{\pi}\arcsin\left(\alpha\sqrt\frac{\log p}{n}\wedge 1\right).
\end{align*}

By the mean value theorem there exists $\theta\in\left(0,\alpha\sqrt\frac{\log p}{n}\wedge1\right)$ such that $$\arcsin\left(\alpha\sqrt\frac{\log p}{n}\wedge 1\right)=\left(\alpha\sqrt\frac{\log p}{n}\wedge 1\right)\frac{1}{\sqrt{1-\theta^2}}.$$

Moreover by convexity of the arcsine function on $[0,1]$ we have $\lambda:=\frac{1}{\sqrt{1-\theta^2}}\in [1,\frac{\pi}{2}]$, where $\lambda=\frac{\pi}{2}$ iff $\alpha\sqrt\frac{\log p}{n}\ge 1$. Therefore we have $\hat\rho=T_\alpha(\sin[\frac{\pi}{2}\hat\tau])=\sin[\frac{\pi}{2}T_{\frac{2}{\pi}\alpha\lambda}(\hat\tau)]$. Finally we conclude by Theorem \ref{the:minuppboucor} that
\begin{align*}
\underset{\rho\in \G_{\text{w},q}(c_{n,p})}{\sup}\frac{1}{p}\E\|\hat\rho-\rho\|_F^2&\le \underset{\lambda\in [1,\frac{\pi}{2}]}{\sup}\underset{\rho\in \G_{\text{w},q}(c_{n,p})}{\sup}\frac{1}{p}\E\|\sin[\frac{\pi}{2}T_{\frac{2}{\pi}\alpha\lambda}(\hat\tau)]-\rho\|_F^2\\
&= \underset{\tilde\alpha\in [\frac{2}{\pi}\alpha,\alpha]}{\sup}~\underset{\rho\in \G_{\text{w},q}(c_{n,p})}{\sup}\frac{1}{p}\E\|\sin[\frac{\pi}{2}T_{\frac{2}{\pi}\alpha\lambda}(\hat\tau)]-\rho\|_F^2\\
&\le \underset{\tilde\alpha\in [\frac{2}{\pi}\alpha,\alpha]}{\sup} C_{\tilde\alpha}c_{n,p}\left(\frac{\log p}{n}\right)^{1-\frac{q}{2}}\\
&\le C_{\alpha}c_{n,p}\left(\frac{\log p}{n}\right)^{1-\frac{q}{2}},
\end{align*}
where by the proof of Theorem \ref{the:minuppboucor} the expression $\underset{\tilde\alpha\in [\frac{2}{\pi}\alpha,\alpha]}{\sup} C_{\tilde\alpha}$ is bounded for any $\alpha>\pi$.
\end{proof}

Next we provide the proof of Theorem \ref{the:optuppbou}. It is sufficient to restrict to the case that $\alpha\le 2$ in the upper bound (\ref{eqn:optuppboucor}). In view of the proof of Theorem \ref{the:minuppbouken} we need to improve Lemma $\ref{lem:lareve}$. This can be done by distinguishing between entries $\hat\tau_{ij}^\ast$ based on weakly correlated components and all the other entries $\hat\tau_{ij}^\ast$. Clearly, if $\tau_{ij}$ is sufficiently large compared to the threshold level $\alpha(\frac{\log p}{n})^{1/2}$, then $|\hat\tau_{ij}^\ast-\tau_{ij}|^2=|\hat\tau_{ij}-\tau_{ij}|^2=O(\frac{\log p}{n})$ with an appropriately large probability. The next lemma gives a more precise statement of the last idea. 

\begin{lemma}\label{lem:constrcor}
Let $(Y_1,Z_1),...,(Y_n,Z_n)\sim \mathcal{N}_2(\mu,\Sigma)$ with correlation $|\rho|\ge \frac{5\pi}{2}\sqrt{\frac{\log p}{n}}$. Then, for any $\frac{2\sqrt{2}}{3}<\alpha\le 2$ we have
\begin{align*}
\P\left(|\hat\tau^\ast(Y_{1:n},Z_{1:n})-\tau(Y_1,Z_1)|\le 3\sqrt{\frac{\log p}{n}}\right)\ge 1-4p^{-\frac{9}{4}}.
\end{align*}
\end{lemma}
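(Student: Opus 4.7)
The plan is to relate the thresholded statistic $\hat\tau^\ast$ to the untruncated $\hat\tau$ by a short case analysis, and then invoke Hoeffding's inequality for U-statistics. The only structural ingredient needed is a lower bound on the population Kendall's tau $\tau=\tau(Y_1,Z_1)$ inherited from the hypothesis on $\rho$.

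First I would convert the weak-correlation hypothesis into a lower bound on $|\tau|$. From the elliptical identity $\rho=\sin(\tfrac{\pi}{2}\tau)$ used throughout the paper, we get $\tau=\tfrac{2}{\pi}\arcsin(\rho)$. Since $|\arcsin x|\ge |x|$ on $[-1,1]$,
\[
|\tau|\;\ge\; \tfrac{2}{\pi}|\rho|\;\ge\; \tfrac{2}{\pi}\cdot\tfrac{5\pi}{2}\sqrt{\tfrac{\log p}{n}}\;=\;5\sqrt{\tfrac{\log p}{n}}.
\]

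Next I would show the inclusion of ``bad events''
\[
\Bigl\{|\hat\tau^\ast-\tau|>3\sqrt{\tfrac{\log p}{n}}\Bigr\}\;\subset\;\Bigl\{|\hat\tau-\tau|>3\sqrt{\tfrac{\log p}{n}}\Bigr\}
\]
by splitting on whether thresholding kicks in. If $|\hat\tau|>\alpha\sqrt{\log p/n}$, then $\hat\tau^\ast=\hat\tau$ and the two events coincide. If instead $|\hat\tau|\le \alpha\sqrt{\log p/n}$, then $\hat\tau^\ast=0$, and the triangle inequality together with the lower bound on $|\tau|$ and the assumption $\alpha\le 2$ gives
\[
|\hat\tau-\tau|\;\ge\; |\tau|-|\hat\tau|\;\ge\; (5-\alpha)\sqrt{\tfrac{\log p}{n}}\;\ge\; 3\sqrt{\tfrac{\log p}{n}},
\]
so the inclusion holds in both cases. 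Notice that the lower bound $\alpha>\tfrac{2\sqrt2}{3}$ in the hypothesis plays no role in this argument; it is present because the lemma is invoked in that regime.

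Finally, I would apply Hoeffding's inequality for U-statistics (equation $(\ref{eqn:Hoeine})$) with $t=3\sqrt{\log p/n}$:
\[
\P\Bigl(|\hat\tau-\tau|>3\sqrt{\tfrac{\log p}{n}}\Bigr)\;\le\; 2\exp\Bigl(-\tfrac{n t^{2}}{4}\Bigr)\;=\;2p^{-9/4}\;\le\;4p^{-9/4},
\]
which together with the inclusion above yields the claim. There is essentially no technical obstacle here; the content of the lemma is in the arithmetic of tightly choosing the constants $\tfrac{5\pi}{2}$ in the hypothesis, $3$ in the deviation bound, and $2$ as the upper bound on $\alpha$ so that $5-\alpha\ge 3$ makes the trivial case analysis work, and so that the resulting exponent $9/4$ is strictly larger than $2$ (which is what the proof of Theorem~\ref{the:optuppbou} will need for a union bound over the $O(p^{2})$ entries of the matrix).
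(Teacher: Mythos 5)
Your proof is correct and follows essentially the same route as the paper's: convert the hypothesis on $|\rho|$ into $|\tau|\ge 5\sqrt{\log p/n}$, observe that the lower bound on $|\tau|$ together with $\alpha\le 2$ forces a deviation of at least $3\sqrt{\log p/n}$ whenever thresholding fires, and close with Hoeffding's inequality for U-statistics. Your inclusion of bad events is a slightly cleaner bookkeeping than the paper's union bound (and in fact gives the sharper constant $2p^{-9/4}$ instead of $4p^{-9/4}$), but the underlying idea is identical.
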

\begin{proof}
The proof is similar to the third part of Lemma \ref{lem:lareve}. Let $\tau:=\tau(Y_1,Z_1)$, $\hat\tau:=\hat\tau(Y_{1:n},Z_{1:n})$ and $\hat\tau^\ast:=\hat\tau^\ast(Y_{1:n},Z_{1:n})$. First note that $|\tau|\ge \frac{2}{\pi}|\rho|\ge 5\sqrt\frac{\log p}{n}$.
Recall that 
\begin{align*}
A=\left\{|\hat\tau|< \alpha \sqrt{\frac{\log p}{n}}\right\}.
\end{align*}

Then we have by Hoeffding's inequality
\begin{align*}
\P\left(|\hat\tau^\ast-\tau|\ge 3\sqrt{\frac{\log p}{n}}\right)&\ge \P\left(\left\{|\hat\tau-\tau|\ge 3\sqrt{\frac{\log p}{n}}\right\}\cap A^c\right)\\
&\ge 1-2p^{-\frac{9}{4}}-\P\left(|\hat\tau|< \alpha \sqrt{\frac{\log p}{n}}\right)\\
&\ge 1-2p^{-\frac{9}{4}}-\P\left(|\hat\tau-\tau|\ge 3\sqrt{\frac{\log p}{n}}\right)\\
&\ge1-4p^{-\frac{9}{4}},
\end{align*}
where the second last line follows by triangle inequality since $\alpha\le 2$, $|\hat\tau|<\alpha\sqrt\frac{\log p}{n}$ and $|\tau|\ge 5\sqrt\frac{\log p}{n}$ implies that $|\hat\tau-\tau|\ge 3\sqrt\frac{\log p}{n}$. Hence the claim holds true.
\end{proof} 

Now we can give a more refined version of Lemma $\ref{lem:lareve}$ for Gaussian random vectors by treating the entries $\hat\tau_{ij}^\ast$ based on weakly correlated components more carefully.
\begin{lemma}\label{lem:concor}
Let $(Y_1,Z_1),...,(Y_n,Z_n)\widesim{i.i.d.} \mathcal{N}_2(\mu,\Sigma)$, where $p\le n^{\eta_u}$. Then for any $\frac{2\sqrt{2}}{3}<\alpha\le 2$ and $\beta>\frac{3\alpha+2\sqrt{2}}{3\alpha-2\sqrt{2}}$ let the event 
$$B:=\left\{|\hat\tau^\ast(Y_{1:n},Z_{1:n})-\tau(Y_1,Z_1)|\le\beta\min\left(|\tau(Y_1,Z_1)|,\alpha\sqrt{\frac{\log p}{n}}\right)\right\}.$$
Then we have 
\begin{align*}
\P(B)\ge 1-C_{\alpha,\beta,\eta_u}p^{-(1+\varepsilon)},
\end{align*}
where $\varepsilon>0$ depends on $\alpha$ and $\beta$, and $\varepsilon$ be chosen larger than $1$ for $\alpha>4/3$.
\end{lemma}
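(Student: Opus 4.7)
The plan is to follow the three-case decomposition used in the proof of Lemma \ref{lem:lareve}, but to replace Hoeffding's inequality by the sharp large deviation bound of Corollary \ref{lem:conweacor} whenever the pair $(Y_1,Z_1)$ is weakly correlated, and to invoke Lemma \ref{lem:constrcor} in the strongly correlated regime. The essential gain is that Corollary \ref{lem:conweacor} upgrades the Hoeffding exponent $\alpha^2/4$ to the Gaussian exponent $9\alpha^2/8$; the factor $9/2$ is precisely what moves the critical constant down from $2$ to $\tfrac{2\sqrt{2}}{3}$.

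Write $\tau:=\tau(Y_1,Z_1)$ and $\hat\tau:=\hat\tau(Y_{1:n},Z_{1:n})$. In Case~(i), $|\tau|\le\frac{2\alpha}{\beta+1}\sqrt{\log p/n}$, the triangle inequality reduces $B^{c}$ to the event $\{|\hat\tau-\tau|\ge\frac{\alpha(\beta-1)}{\beta+1}\sqrt{\log p/n}\}$; since $|\rho|\le\frac{\pi}{2}|\tau|$ is $O(\sqrt{\log p/n})$ the weak-correlation hypothesis of Corollary \ref{lem:conweacor} is satisfied and gives
\[
\P(B^{c})\le C_{\alpha,\beta,\eta_u}\,p^{-\frac{9}{8}\bigl(\frac{\alpha(\beta-1)}{\beta+1}\bigr)^{2}}.
\]
Case~(ii), $\frac{2\alpha}{\beta+1}\sqrt{\log p/n}<|\tau|<\frac{2\alpha\beta}{\beta+1}\sqrt{\log p/n}$, is still in the weak regime and yields a strictly smaller tail by the same corollary applied at scale $\frac{2\alpha\beta}{\beta+1}$. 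For Case~(iii), $|\tau|>\frac{2\alpha\beta}{\beta+1}\sqrt{\log p/n}$, I would subdivide according to whether $|\rho|\le\frac{5\pi}{2}\sqrt{\log p/n}$: on this sub-range Corollary \ref{lem:conweacor} reproduces the same exponent as in Case~(i), whereas on the complementary, strongly correlated sub-range Lemma \ref{lem:constrcor} yields $\P(|\hat\tau^{\ast}-\tau|>3\sqrt{\log p/n})\le 4p^{-9/4}$, which already implies $B$ provided $\alpha\beta\ge 3$. A short algebraic check confirms that the admissible range $\alpha>\tfrac{2\sqrt{2}}{3}$, $\beta>\tfrac{3\alpha+2\sqrt{2}}{3\alpha-2\sqrt{2}}$ does force $\alpha\beta>3$: the inequality reduces to $3\alpha^{2}+(2\sqrt{2}-9)\alpha+6\sqrt{2}>0$, a quadratic in $\alpha$ with negative discriminant, hence positive on all of $\R$.

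To extract $\varepsilon$, the binding exponent across the three (sub)cases is $\tfrac{9}{8}\bigl(\alpha(\beta-1)/(\beta+1)\bigr)^{2}$. The parameter condition $\beta>\tfrac{3\alpha+2\sqrt{2}}{3\alpha-2\sqrt{2}}$ is equivalent, by a one-line rearrangement, to $\frac{\alpha(\beta-1)}{\beta+1}>\tfrac{2\sqrt{2}}{3}$, i.e.\ precisely to this exponent exceeding $1$, which delivers $\varepsilon>0$. For $\alpha>4/3$, replacing the threshold by $\beta>\tfrac{3\alpha+4}{3\alpha-4}$ (which is strictly stronger than the original for such $\alpha$, since cross-multiplying reduces to $2>\sqrt{2}$) pushes the same exponent past $2$, so $\varepsilon>1$ is attainable. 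The main obstacle I anticipate is the bookkeeping around the boundary between weak and strong correlation in Case~(iii): one has to choose the constant $\zeta$ entering Corollary \ref{lem:conweacor} uniformly in $n,p$ so that the weak regime covers all $|\tau|$ up to the $5\sqrt{\log p/n}$ threshold at which Lemma \ref{lem:constrcor} takes over, and then verify that all resulting error constants (including the $\log p/n^{1/4}$ factor in Proposition \ref{pro:lardevken}, absorbed using $p\le n^{\eta_u}$) can be collected into a single $C_{\alpha,\beta,\eta_u}$.
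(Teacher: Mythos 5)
Your proposal is correct and follows essentially the same route as the paper's (very terse) proof: split by whether $|\rho|$ exceeds $\frac{5\pi}{2}\sqrt{\log p/n}$, invoke Lemma \ref{lem:constrcor} in the strong regime, and rerun the three-case argument of Lemma \ref{lem:lareve} with Corollary \ref{lem:conweacor} in place of Hoeffding's inequality in the weak regime. The explicit checks you add — that $\alpha\beta>3$ follows automatically from the hypotheses (via the negative-discriminant quadratic), that the binding exponent is $\frac{9}{8}\bigl(\alpha(\beta-1)/(\beta+1)\bigr)^{2}$ and that the stated constraint on $\beta$ is exactly the condition for it to exceed $1$ — are the details the paper elides, and they are all correct.
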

\begin{proof}
For $|\rho(Y_1,Z_1)|\ge \frac{5\pi}{2}\sqrt\frac{\log p}{n}$ we have $$\beta\min\left(|\tau(Y_1,Z_1)|,\alpha\sqrt{\frac{\log p}{n}}\right)\ge\beta\alpha\sqrt\frac{\log p}{n}>3\sqrt\frac{\log p}{n}$$ and therefore by Lemma \ref{lem:constrcor} the inequality holds here. Otherwise for $|\rho(Y_1,Z_1)|$ $<\frac{5\pi}{2}\sqrt\frac{\log p}{n}$  we just need to replace Hoeffding's inequality in the proof of Lemma \ref{lem:lareve} by the upper bound on the probability that $\hat\tau(Y_{1:n},Z_{1:n})$ is close to its mean $\tau(Y_1,Z_1)$  in Corollary \ref{lem:conweacor}.
\end{proof} 
\begin{proof}[Proof of Theorem \ref{the:optuppbou}]
The proof of inequality (\ref{eqn:optuppboucor}) is essentially analogous to the one of Theorem \ref{the:minuppbouken}. Using Lemma \ref{lem:concor} instead of Lemma \ref{lem:lareve} to compute $\P^{1-1/N}(B_{ij}^c)$ provides the desired upper bound. We choose an appropriate example to show that for $\alpha<\frac{2\sqrt{2}}{3}$ the corresponding estimator $\hat\rho^\ast$ in general does not attain the minimax rate. Since $\Id\in\mathcal{G}_{\text{w},q}(c_{n,p})$ for all $0\le q<1$ and all $c_{n,p}>0$, we assume that $\Sigma=\Id$. Furthermore let $\varepsilon:=1-\frac{9}{8}\alpha^2$ and $c_{n,p}=o\left(\left(\frac{\log p}{n}\right)^{q/2}\frac{p^\varepsilon}{\sqrt{\log p}}\right)$. The lower bound in Corollary \ref{lem:conweacor} provides 
\begin{align*}
\E p^{-1}\|\hat\tau^\ast-\tau\|_F^2&\ge p^{-1}\sum_{i,j=1}^p\left(\hat\tau^\ast_{ij}-\tau_{ij}\right)^2\ind_{A_{ij}^c}\\
&\ge \frac{\alpha^2\log p}{np}\sum_{\substack{i,j=1\\i\neq j}}^p\P(A_{ij}^c)\\
&\ge C_{\alpha,\eta_u}\frac{p^\varepsilon\log p}{n\sqrt{\log p}}\\
&= C_{\alpha,\eta_u}c_{n,p}\left(\frac{\log p}{n}\right)^{1-\frac{q}{2}}\frac{\frac{p^\varepsilon}{\sqrt{\log p}}\left(\frac{\log p}{n}\right)^{\frac{q}{2}}}{c_{n,p}},
\end{align*}
where we used the well-known inequality
\begin{align*}
\frac{\phi(x)}{x+1/x}<1-\Phi(x).
\end{align*}
Hence, we obtain
\begin{align*}
\sup_{\rho\in\G_{\text{w},q}(c_{n,p})}p^{-1}\E\|\sin[\frac{\pi}{2}\hat\tau^\ast]-\rho\|_F^2&\ge p^{-1}\E\|\sin[\frac{\pi}{2}\hat\tau^\ast]-\Id\|_F^2\\
&\ge  p^{-1}\E\|\hat\tau^\ast-\Id\|_F^2\\
&\ge C_{\alpha,\varepsilon,\eta_u}c_{n,p}\left(\frac{\log p}{n}\right)^{1-\frac{q}{2}}\frac{\frac{p^\varepsilon}{\sqrt{\log p}}\left(\frac{\log p}{n}\right)^{\frac{q}{2}}}{c_{n,p}}.
\end{align*}
The last line proves that $\hat\rho^\ast$ does not attain the minimax rate. The last statement of the theorem follows analogously, where the upper bound on $c_{n,p}$ in Assumption $(A_3)$ is to be taken into consideration.
\end{proof}
\begin{proof}[Proof of Theorem \ref{the:optuppbou2}]
Inequality (\ref{eqn:optuppbou2}) follows similarly to inequality (\ref{eqn:minuppboucor2}). For the proof of the lower bound fix $\alpha<\sqrt{2}\pi/3$ let the sample size $n$ be sufficiently large such that for some $\alpha<\tilde\alpha<\sqrt{2}\pi/3$ the implication  
\begin{align*}
|\sin(\frac{\pi}{2}x)|\le \alpha\sqrt\frac{\log p}{n}\Longrightarrow |x|\le \frac{2}{\pi}\tilde\alpha\sqrt\frac{\log p}{n}
\end{align*}
holds true. Then, we conclude
\begin{align*}
\sup_{\rho\in\G_{\text{w},q}(c_{n,p})}p^{-1}\E\|\hat\rho-\rho\|_F^2&\ge p^{-1}\E\|T_\alpha(\sin[\frac{\pi}{2}\hat\tau])-\Id\|_F^2\\
&\ge p^{-1}\E\|\sin[\frac{\pi}{2}\hat\tau^\ast_{\frac{2}{\pi}\tilde\alpha}]-\Id\|_F^2\\
&\ge C_{\frac{2}{\pi}\tilde\alpha,\varepsilon,\eta_u}c_{n,p}\left(\frac{\log p}{n}\right)^{1-\frac{q}{2}}\frac{p^\varepsilon\left(\frac{\log p}{n}\right)^{\frac{q}{2}}}{c_{n,p}},
\end{align*}
where $\varepsilon=\frac{9}{8}-\left(\frac{2\tilde\alpha}{\pi}\right)^2$. Thus, the minimax rate is not attained for $$c_{n,p}=o\left(\left(\frac{\log p}{n}\right)^{q/2}\frac{p^\varepsilon}{\sqrt{\log p}}\right).$$
Analogously we obtain the last statement of the theorem.
\end{proof}
\begin{proof}[Proof of Theorem \ref{optspe}]
The first part of the result follows analogously to \eqref{eqn:minuppboucor}. We obtain the second part of the theorem by the elementary inequality
\begin{align*}
\sup_{\rho\in\G_{\text{w},q}(c_{n,p})}\E\|\sin[&\frac{\pi}{2}\hat\tau^\ast]-\rho\|_2^2\ge \sup_{\rho\in\G_{\text{w},q}(c_{n,p})}\frac{1}{p}\E\|\sin[\frac{\pi}{2}\hat\tau^\ast]-\rho\|_F^2.
\end{align*}
and the bound
\begin{align*}
\sup_{\rho\in\G_{\text{w},0}(c_{n,p})}\frac{1}{p}\E\|\sin[\frac{\pi}{2}\hat\tau^\ast]-\rho\|_F^2\ge C_{\alpha,\eta_u}c_{n,p}^2\frac{\log p}{n}\frac{p^\varepsilon}{c_{n,p}^2\sqrt{\log p}}
\end{align*}
from the proof of Theorem \ref{the:optuppbou}, where we assume that $$c_{n,p}=o\left(\frac{p^{\varepsilon/2}}{(\log p)^{1/4}}\right).$$

\end{proof}

\subsection*{Acknowledgments}The present work is part of my Ph.D. thesis. I am grateful to my supervisor Angelika Rohde for her constant encouragement and support.
I sincerely thank Fang Han for pointing out a mistake in an earlier version of this paper. I also thank Holger Dette and Tim Patschkowski for helpful suggestions to improve the presentation of the paper.\newline
This work was supported by the {\it Deutsche Forschungsgemeinschaft} research unit 1735, Ro 3766/3-1.

\bibliographystyle{imsart-nameyear}
\bibliography{reference}

\begin{thebibliography}{42}

\bibitem[\protect\citeauthoryear{Abramovich et~al.}{2006}]{Abramovich2006}
\begin{barticle}[author]
\bauthor{\bsnm{Abramovich},~\bfnm{F.}\binits{F.}},
  \bauthor{\bsnm{Benjamini},~\bfnm{Y.}\binits{Y.}},
  \bauthor{\bsnm{Donoho},~\bfnm{D.}\binits{D.}} \AND
  \bauthor{\bsnm{Johnstone},~\bfnm{I.}\binits{I.}}
(\byear{2006}).
\btitle{Adapting to unknown sparsity by controlling the false discovery rate}.
\bjournal{Ann. Stat.}
\bvolume{34}
\bpages{584-653}.
\end{barticle}
\endbibitem

\bibitem[\protect\citeauthoryear{Amini and Wainwright}{2009}]{Amini2009}
\begin{barticle}[author]
\bauthor{\bsnm{Amini},~\bfnm{A.}\binits{A.}} \AND
  \bauthor{\bsnm{Wainwright},~\bfnm{M.}\binits{M.}}
(\byear{2009}).
\btitle{High-dimensional analysis of semidefinite relaxations for sparse
  principal components}.
\bjournal{Ann. Statist.}
\bvolume{37}
\bpages{2877-2921}.
\end{barticle}
\endbibitem

\bibitem[\protect\citeauthoryear{Baik and Silverstein}{2006}]{Baik2006}
\begin{barticle}[author]
\bauthor{\bsnm{Baik},~\bfnm{Jinho}\binits{J.}} \AND
  \bauthor{\bsnm{Silverstein},~\bfnm{Jack~W.}\binits{J.~W.}}
(\byear{2006}).
\btitle{Eigenvalues of large sample covariance matrices of spiked population
  models}.
\bjournal{J. Multivariate Anal.}
\bvolume{97}.
\end{barticle}
\endbibitem

\bibitem[\protect\citeauthoryear{Berthet and Rigollet}{2013}]{Berthet2013}
\begin{barticle}[author]
\bauthor{\bsnm{Berthet},~\bfnm{Q.}\binits{Q.}} \AND
  \bauthor{\bsnm{Rigollet},~\bfnm{P.}\binits{P.}}
(\byear{2013}).
\btitle{Optimal detection of sparse principal components in high dimensions}.
\bjournal{Ann. Statist.}
\bvolume{41}
\bpages{1780-1815}.
\end{barticle}
\endbibitem

\bibitem[\protect\citeauthoryear{Bickel and Levina}{2008a}]{Bickel2008b}
\begin{barticle}[author]
\bauthor{\bsnm{Bickel},~\bfnm{P.}\binits{P.}} \AND
  \bauthor{\bsnm{Levina},~\bfnm{E.}\binits{E.}}
(\byear{2008}a).
\btitle{Covariance regularization by thresholding}.
\bjournal{Ann. Stat.}
\bvolume{36}
\bpages{2577-2604}.
\end{barticle}
\endbibitem

\bibitem[\protect\citeauthoryear{Bickel and Levina}{2008b}]{Bickel2008}
\begin{barticle}[author]
\bauthor{\bsnm{Bickel},~\bfnm{P.}\binits{P.}} \AND
  \bauthor{\bsnm{Levina},~\bfnm{E.}\binits{E.}}
(\byear{2008}b).
\btitle{Regularized estimation of large covariance matrices}.
\bjournal{Ann. Stat.}
\bvolume{36}
\bpages{199-227}.
\end{barticle}
\endbibitem

\bibitem[\protect\citeauthoryear{Borsdorf and Higham}{2010}]{Borsdorf}
\begin{barticle}[author]
\bauthor{\bsnm{Borsdorf},~\bfnm{R.}\binits{R.}} \AND
  \bauthor{\bsnm{Higham},~\bfnm{N.}\binits{N.}}
(\byear{2010}).
\btitle{A preconditioned Newton algorithm for the nearest correlation matrix}.
\bjournal{IMA J. Numer. Anal.}
\bvolume{30}.
\end{barticle}
\endbibitem

\bibitem[\protect\citeauthoryear{Cai and Liu}{2011}]{Cai2011}
\begin{barticle}[author]
\bauthor{\bsnm{Cai},~\bfnm{T.}\binits{T.}} \AND
  \bauthor{\bsnm{Liu},~\bfnm{W.}\binits{W.}}
(\byear{2011}).
\btitle{Adaptive thresholding for sparse covariance matrix estimation}.
\bjournal{J. Amer. Statist. Assoc.}
\bvolume{106}.
\end{barticle}
\endbibitem

\bibitem[\protect\citeauthoryear{Cai, Ma and Wu}{2013}]{Cai2012}
\begin{barticle}[author]
\bauthor{\bsnm{Cai},~\bfnm{T.}\binits{T.}},
  \bauthor{\bsnm{Ma},~\bfnm{Z.}\binits{Z.}} \AND
  \bauthor{\bsnm{Wu},~\bfnm{Y.}\binits{Y.}}
(\byear{2013}).
\btitle{Sparse PCA: Optimal rates and adaptive estimation}.
\bjournal{Ann. Statist.}
\bvolume{41}
\bpages{3074-3110}.
\end{barticle}
\endbibitem

\bibitem[\protect\citeauthoryear{Cai, Ma and Wu}{2014+}]{Caid}
\begin{bunpublished}[author]
\bauthor{\bsnm{Cai},~\bfnm{T.}\binits{T.}},
  \bauthor{\bsnm{Ma},~\bfnm{Z.}\binits{Z.}} \AND
  \bauthor{\bsnm{Wu},~\bfnm{Y.}\binits{Y.}}
(\byear{2014}+).
\btitle{Optimal estimation and rank detection for sparse spiked covariance
  matrices}.
\bnote{To appear in: {\it Prob. Theory Rel. Fields}}.
\end{bunpublished}
\endbibitem

\bibitem[\protect\citeauthoryear{Cai and Zhou}{2012a}]{Caib}
\begin{barticle}[author]
\bauthor{\bsnm{Cai},~\bfnm{T.}\binits{T.}} \AND
  \bauthor{\bsnm{Zhou},~\bfnm{H.}\binits{H.}}
(\byear{2012}a).
\btitle{Optimal rates of convergence for sparse covariance matrix estimation}.
\bjournal{Ann. Stat.}
\bvolume{40}
\bpages{2389-2420}.
\end{barticle}
\endbibitem

\bibitem[\protect\citeauthoryear{Cai and Zhou}{2012b}]{Caic}
\begin{barticle}[author]
\bauthor{\bsnm{Cai},~\bfnm{T.}\binits{T.}} \AND
  \bauthor{\bsnm{Zhou},~\bfnm{H.}\binits{H.}}
(\byear{2012}b).
\btitle{Covariance matrix estimation under $l_1$ norm}.
\bjournal{Statist. Sinicia}
\bvolume{22}
\bpages{1319-1378}.
\end{barticle}
\endbibitem

\bibitem[\protect\citeauthoryear{Cand\`es et~al.}{2010}]{Candes2010}
\begin{barticle}[author]
\bauthor{\bsnm{Cand\`es},~\bfnm{E.}\binits{E.}},
  \bauthor{\bsnm{Eldar},~\bfnm{Y.}\binits{Y.}},
  \bauthor{\bsnm{Needell},~\bfnm{D.}\binits{D.}} \AND
  \bauthor{\bsnm{Randall},~\bfnm{P.}\binits{P.}}
(\byear{2010}).
\btitle{Compressed sensing with coherent and redundant dictionaries}.
\bjournal{Appl. Comput. Harmon. Anal.}
\bvolume{31}
\bpages{59-73}.
\end{barticle}
\endbibitem

\bibitem[\protect\citeauthoryear{Clark}{2000}]{Clark2000}
\begin{barticle}[author]
\bauthor{\bsnm{Clark},~\bfnm{L.}\binits{L.}}
(\byear{2000}).
\btitle{An Asymptotic Expansion for the Number of Permutations with a Certain
  Number of Inversions}.
\bjournal{Electron. J. Comb.}
\bvolume{7}.
\end{barticle}
\endbibitem

\bibitem[\protect\citeauthoryear{Cramer}{1750}]{Cramer1750}
\begin{bbook}[author]
\bauthor{\bsnm{Cramer},~\bfnm{G.}\binits{G.}}
(\byear{1750}).
\btitle{Introduction \`a l'analyse des lignes courbes alg\'ebriques}.
\bpublisher{Gen\`eve: Fr\`eres Cramer et C. Philibert}.
\end{bbook}
\endbibitem

\bibitem[\protect\citeauthoryear{D'Aspremont et~al.}{2007}]{daspremont2007}
\begin{binproceedings}[author]
\bauthor{\bsnm{D'Aspremont},~\bfnm{A.}\binits{A.}},
  \bauthor{\bsnm{El~Ghaoui},~\bfnm{L.}\binits{L.}},
  \bauthor{\bsnm{Jordan},~\bfnm{M.}\binits{M.}} \AND
  \bauthor{\bsnm{Lanckriet},~\bfnm{G.}\binits{G.}}
(\byear{2007}).
\btitle{A direct formulation for sparse PCA using semidefinite programming}.
In \bbooktitle{SIAM Rev.}
\bvolume{49}
\bpages{434-448}.
\end{binproceedings}
\endbibitem

\bibitem[\protect\citeauthoryear{Donoho}{2006}]{Donoho2006}
\begin{barticle}[author]
\bauthor{\bsnm{Donoho},~\bfnm{D.}\binits{D.}}
(\byear{2006}).
\btitle{Compressed sensing}.
\bjournal{IEEE Trans. Inform. Theory}
\bvolume{52}
\bpages{1289-1306}.
\end{barticle}
\endbibitem

\bibitem[\protect\citeauthoryear{El~Karoui}{2008}]{Karoui2008}
\begin{barticle}[author]
\bauthor{\bsnm{El~Karoui},~\bfnm{N.}\binits{N.}}
(\byear{2008}).
\btitle{Operator norm consistent estimation of large-dimensional sparse
  covariance matrices}.
\bjournal{Ann. Stat.}
\bvolume{36}
\bpages{2717-2756}.
\end{barticle}
\endbibitem

\bibitem[\protect\citeauthoryear{Fan, Yuan and Mincheva}{2013}]{Fan2013}
\begin{barticle}[author]
\bauthor{\bsnm{Fan},~\bfnm{J.}\binits{J.}},
  \bauthor{\bsnm{Yuan},~\bfnm{L.}\binits{L.}} \AND
  \bauthor{\bsnm{Mincheva},~\bfnm{M}\binits{M.}}
(\byear{2013}).
\btitle{Large covariance estimation by thresholding principal orthogonal
  complements}.
\bjournal{J. Roy. Statist. Soc. Ser. B}
\bvolume{75}
\bpages{603-680}.
\end{barticle}
\endbibitem

\bibitem[\protect\citeauthoryear{Fang, Fang and Kotz}{2002}]{Fang2002}
\begin{barticle}[author]
\bauthor{\bsnm{Fang},~\bfnm{H.}\binits{H.}},
  \bauthor{\bsnm{Fang},~\bfnm{K.}\binits{K.}} \AND
  \bauthor{\bsnm{Kotz},~\bfnm{S.}\binits{S.}}
(\byear{2002}).
\btitle{The meta-elliptical distributions with given marginals}.
\bjournal{J. Mult. Anal.}
\bvolume{82}.
\end{barticle}
\endbibitem

\bibitem[\protect\citeauthoryear{Fang, Kotz and Ng}{1990}]{Fang1990}
\begin{bbook}[author]
\bauthor{\bsnm{Fang},~\bfnm{K.}\binits{K.}},
  \bauthor{\bsnm{Kotz},~\bfnm{S.}\binits{S.}} \AND
  \bauthor{\bsnm{Ng},~\bfnm{K.}\binits{K.}}
(\byear{1990}).
\btitle{Symmetric Multivariate and Related Distributions}.
\bpublisher{Chapman \& Hall}.
\end{bbook}
\endbibitem

\bibitem[\protect\citeauthoryear{Han and Liu}{2013}]{Han2013}
\begin{bunpublished}[author]
\bauthor{\bsnm{Han},~\bfnm{F.}\binits{F.}} \AND
  \bauthor{\bsnm{Liu},~\bfnm{H.}\binits{H.}}
(\byear{2013}).
\btitle{Statistical analysis of latent generalized correlation matrix
  estimation in transellipitical distribution}.
\bnote{Preprint}.
\end{bunpublished}
\endbibitem

\bibitem[\protect\citeauthoryear{Han, Liu and Zhao}{2013}]{Han2013b}
\begin{barticle}[author]
\bauthor{\bsnm{Han},~\bfnm{F.}\binits{F.}},
  \bauthor{\bsnm{Liu},~\bfnm{H.}\binits{H.}} \AND
  \bauthor{\bsnm{Zhao},~\bfnm{T.}\binits{T.}}
(\byear{2013}).
\btitle{CODA: High dimensional copula discriminant analysis}.
\bjournal{J. Mach. Learn. Res.}
\bvolume{14}
\bpages{629-671}.
\end{barticle}
\endbibitem

\bibitem[\protect\citeauthoryear{Han and Liu}{2014}]{Han2014}
\begin{barticle}[author]
\bauthor{\bsnm{Han},~\bfnm{F.}\binits{F.}} \AND
  \bauthor{\bsnm{Liu},~\bfnm{H.}\binits{H.}}
(\byear{2014}).
\btitle{Scale-invariant sparse PCA on high dimensional meta-elliptical data}.
\bjournal{J. Amer. Statist. Assoc.}
\bvolume{109}
\bpages{275-287}.
\end{barticle}
\endbibitem

\bibitem[\protect\citeauthoryear{Hoeffding}{1963}]{Hoeffding1963}
\begin{barticle}[author]
\bauthor{\bsnm{Hoeffding},~\bfnm{W.}\binits{W.}}
(\byear{1963}).
\btitle{Probability inequalities for sums of bounded random variables}.
\bjournal{J. Amer. Statist. Assoc.}
\bvolume{58}
\bpages{13-30}.
\end{barticle}
\endbibitem

\bibitem[\protect\citeauthoryear{Huang and Shen}{2008}]{Shen2008}
\begin{barticle}[author]
\bauthor{\bsnm{Huang},~\bfnm{J.}\binits{J.}} \AND
  \bauthor{\bsnm{Shen},~\bfnm{H.}\binits{H.}}
(\byear{2008}).
\btitle{Sparse principal component analysis via regularized low rank matrix
  approximation}.
\bjournal{J. Multivariate Anal.}
\bvolume{99}
\bpages{1015-1034}.
\end{barticle}
\endbibitem

\bibitem[\protect\citeauthoryear{Hult and Lindskog}{2002}]{Hult2002}
\begin{barticle}[author]
\bauthor{\bsnm{Hult},~\bfnm{H.}\binits{H.}} \AND
  \bauthor{\bsnm{Lindskog},~\bfnm{F.}\binits{F.}}
(\byear{2002}).
\btitle{Multivariate extremes, aggregation and dependence in elliptical
  distributions}.
\bjournal{Adv. in Appl. Probab.}
\bvolume{34}
\bpages{587-608}.
\end{barticle}
\endbibitem

\bibitem[\protect\citeauthoryear{Johnstone}{2001}]{Johnstone2001}
\begin{barticle}[author]
\bauthor{\bsnm{Johnstone},~\bfnm{Iain}\binits{I.}}
(\byear{2001}).
\btitle{{On the distribution of the largest eigenvalue in principal component
  analysis.}}
\bjournal{Ann. Stat.}
\bvolume{29}
\bpages{295-327}.
\end{barticle}
\endbibitem

\bibitem[\protect\citeauthoryear{Johnstone and Lu}{2009}]{Johnstone2009}
\begin{barticle}[author]
\bauthor{\bsnm{Johnstone},~\bfnm{I.}\binits{I.}} \AND
  \bauthor{\bsnm{Lu},~\bfnm{Y.}\binits{Y.}}
(\byear{2009}).
\btitle{{On consistency and sparsity for principal components analysis in high
  dimensions}}.
\bjournal{J. Am. Stat. Assoc.}
\bvolume{104}
\bpages{682-693}.
\end{barticle}
\endbibitem

\bibitem[\protect\citeauthoryear{Johnstone and Paul}{2007}]{Johnstone2007}
\begin{bunpublished}[author]
\bauthor{\bsnm{Johnstone},~\bfnm{Iain}\binits{I.}} \AND
  \bauthor{\bsnm{Paul},~\bfnm{Debashis}\binits{D.}}
(\byear{2007}).
\btitle{Sparse principal component analysis for high dimensional data}.
\bnote{Technical report}.
\end{bunpublished}
\endbibitem

\bibitem[\protect\citeauthoryear{Kendall}{1938}]{Kendall1938}
\begin{barticle}[author]
\bauthor{\bsnm{Kendall},~\bfnm{M.}\binits{M.}}
(\byear{1938}).
\btitle{A new measure of rank correlation}.
\bjournal{Biometrika}
\bvolume{30}
\bpages{81-93}.
\end{barticle}
\endbibitem

\bibitem[\protect\citeauthoryear{Lehmann and Casella}{1998}]{Lehmann1998}
\begin{bbook}[author]
\bauthor{\bsnm{Lehmann},~\bfnm{E.}\binits{E.}} \AND
  \bauthor{\bsnm{Casella},~\bfnm{G.}\binits{G.}}
(\byear{1998}).
\btitle{Theory of Point Estimation}.
\bpublisher{Springer, New York}.
\end{bbook}
\endbibitem

\bibitem[\protect\citeauthoryear{Levina and Vershynin}{2012}]{Levina2012}
\begin{barticle}[author]
\bauthor{\bsnm{Levina},~\bfnm{E.}\binits{E.}} \AND
  \bauthor{\bsnm{Vershynin},~\bfnm{R.}\binits{R.}}
(\byear{2012}).
\btitle{Partial estimation of covariance matrices}.
\bjournal{Prob. Theory Rel. Fields}
\bvolume{153}
\bpages{405-419}.
\end{barticle}
\endbibitem

\bibitem[\protect\citeauthoryear{Liu, Lafferty and Wasserman}{2009}]{Liu2009}
\begin{barticle}[author]
\bauthor{\bsnm{Liu},~\bfnm{H.}\binits{H.}},
  \bauthor{\bsnm{Lafferty},~\bfnm{J.}\binits{J.}} \AND
  \bauthor{\bsnm{Wasserman},~\bfnm{L.}\binits{L.}}
(\byear{2009}).
\btitle{The nonparanormal: Semiparametric estimation of high dimensional
  undirected graphs}.
\bjournal{J. Mach. Learn. Res.}
\bvolume{10}
\bpages{2295-2328}.
\end{barticle}
\endbibitem

\bibitem[\protect\citeauthoryear{Liu et~al.}{2012}]{Liu2012}
\begin{barticle}[author]
\bauthor{\bsnm{Liu},~\bfnm{H.}\binits{H.}},
  \bauthor{\bsnm{Han},~\bfnm{F.}\binits{F.}},
  \bauthor{\bsnm{Yuan},~\bfnm{M.}\binits{M.}},
  \bauthor{\bsnm{Lafferty},~\bfnm{J.}\binits{J.}} \AND
  \bauthor{\bsnm{Wasserman},~\bfnm{L.}\binits{L.}}
(\byear{2012}).
\btitle{Highdimensional semiparametric gaussian copula graphical models}.
\bjournal{Ann. Statist.}
\bvolume{40}
\bpages{2293-2326}.
\end{barticle}
\endbibitem

\bibitem[\protect\citeauthoryear{Muir}{1900}]{Muir1900}
\begin{barticle}[author]
\bauthor{\bsnm{Muir},~\bfnm{T.}\binits{T.}}
(\byear{1900}).
\btitle{Determination of the sign of a single term of a determinant}.
\bjournal{Proc. Roy. Soc. Edinb.}
\bvolume{22}
\bpages{441-477}.
\end{barticle}
\endbibitem

\bibitem[\protect\citeauthoryear{Pourahmadi}{2013}]{Pourahmadi2013}
\begin{bbook}[author]
\bauthor{\bsnm{Pourahmadi},~\bfnm{M.}\binits{M.}}
(\byear{2013}).
\btitle{High-Dimensional Covariance Estimation: With High-Dimensional Data}.
\bpublisher{Wiley}.
\end{bbook}
\endbibitem

\bibitem[\protect\citeauthoryear{Saulis and Statulevicius}{1991}]{Saulis1991}
\begin{bbook}[author]
\bauthor{\bsnm{Saulis},~\bfnm{L.}\binits{L.}} \AND
  \bauthor{\bsnm{Statulevicius},~\bfnm{V.}\binits{V.}}
(\byear{1991}).
\btitle{Limit Theorems for Large Deviations}.
\bpublisher{Springer Science \& Business Media}.
\end{bbook}
\endbibitem

\bibitem[\protect\citeauthoryear{Vu and Lei}{2013}]{Vu2013}
\begin{barticle}[author]
\bauthor{\bsnm{Vu},~\bfnm{V.}\binits{V.}} \AND
  \bauthor{\bsnm{Lei},~\bfnm{J.}\binits{J.}}
(\byear{2013}).
\btitle{Minimax sparse principal subspace estimation in high dimensions}.
\bjournal{Ann. Statist.}
\bvolume{41}
\bpages{2905-2947}.
\end{barticle}
\endbibitem

\bibitem[\protect\citeauthoryear{Wegkamp and Zhao}{2013}]{Wegkamp2013}
\begin{bunpublished}[author]
\bauthor{\bsnm{Wegkamp},~\bfnm{M.}\binits{M.}} \AND
  \bauthor{\bsnm{Zhao},~\bfnm{Y.}\binits{Y.}}
(\byear{2013}).
\btitle{Adaptive Estimation of the Copula Correlation Matrix for Semiparametric
  Elliptical Copulas}.
\bnote{Preprint}.
\end{bunpublished}
\endbibitem

\bibitem[\protect\citeauthoryear{Xue and Zou}{2012}]{Xue2012}
\begin{barticle}[author]
\bauthor{\bsnm{Xue},~\bfnm{L.}\binits{L.}} \AND
  \bauthor{\bsnm{Zou},~\bfnm{H.}\binits{H.}}
(\byear{2012}).
\btitle{Regularized rank-based estimation of high-dimensional nonparanormal
  graphical models}.
\bjournal{Ann. Statist.}
\bvolume{40}
\bpages{2541-2571}.
\end{barticle}
\endbibitem

\bibitem[\protect\citeauthoryear{Xue and Zou}{2014}]{Xue2014}
\begin{barticle}[author]
\bauthor{\bsnm{Xue},~\bfnm{L.}\binits{L.}} \AND
  \bauthor{\bsnm{Zou},~\bfnm{H.}\binits{H.}}
(\byear{2014}).
\btitle{Optimal estimation of sparse correlation matrices of semiparametric
  Gaussian copulas}.
\bjournal{Stat. Interface}
\bvolume{7}.
\end{barticle}
\endbibitem

\end{thebibliography}

\end{document}